\documentclass[11pt]{amsart}
\usepackage{amssymb}
\usepackage{amsfonts}
\usepackage{amsmath}
\usepackage{graphicx}
\usepackage{xcolor}
\usepackage{amsmath}
\usepackage{mathrsfs}
\usepackage{stmaryrd}
\usepackage{epsfig,color}
\usepackage{blindtext}
\usepackage{enumerate}
\usepackage{hyperref}
\usepackage{url}
\usepackage{bbm}
\usepackage{nicefrac,mathtools}
\usepackage{bm}   
\usepackage{tabularx}
\DeclareGraphicsExtensions{.pdf,.jpeg,.png}
\usepackage{epstopdf}
\usepackage{cancel} 
\usepackage[normalem]{ulem} 
\usepackage{verbatim} 
\usepackage{scalerel}
\usepackage{enumitem}

\usepackage{cases}

\usepackage{subcaption}

\usepackage{color}
\usepackage[msc-links, lite]{amsrefs}
\usepackage{geometry}
\usepackage{stackengine,scalerel}

\newtheorem{theorem}{Theorem}[section]

\newtheorem{proposition}[theorem]{Proposition}
\newtheorem{lemma}[theorem]{Lemma}
\newtheorem{corollary}[theorem]{Corollary}

\newtheorem{claim}[]{Claim}
\newtheorem*{acknowledgements}{Acknowledgements}
\theoremstyle{definition}
\newtheorem{definition}[theorem]{Definition}
\theoremstyle{remark}
\newtheorem{remark}[theorem]{Remark}

\numberwithin{equation}{section}

\newcommand{\mf}{\mathbf}
\newcommand{\mb}{\mathbb}
\newcommand{\mc}{\mathcal}

\newcommand{\mk}{\mathfrak}

\newcommand{\wti}{\widetilde}

\newcommand{\Vol}{\mathrm{Vol}}
\newcommand{\vol}{\mathrm{vol}}
\newcommand{\Area}{\mathrm{Area}}

\newcommand{\bd}{\partial}

\newcommand{\rom}[1]{\expandafter\romannumeral #1}
\newcommand{\Rom}[1]{\uppercase\expandafter{\romannumeral #1}}

\DeclareMathOperator{\Index}{index}
\DeclareMathOperator{\Null}{nullity}
\DeclareMathOperator{\Ker}{ker}

\DeclareMathOperator{\Ric}{Ric}

\DeclareMathOperator{\spt}{spt}

\DeclareMathOperator{\closure}{Clos}

\DeclareMathOperator{\Graph}{Graph}

\newcommand{\wcS}{\widetilde{\mathcal{S}}}

\title{Rigidity of projective area widths and a weighted Crofton formula on $RP^3$}

\author{Tongrui Wang}
\address{School of Mathematical Sciences, Shanghai Jiao Tong University, Minhang District, Shanghai 200240, China}
\email{wangtongrui@sjtu.edu.cn}

\begin{document}
\maketitle

\begin{abstract}
	We show that a Riemannian metric on $RP^3$ is a surface Zoll metric if and only if its four projective area widths are equal. 
	We also establish a weighted Crofton formula on $RP^3$ under surface Zoll metrics, from which we derive an inequality relating the systole, the volume, and the common value of the projective area widths. 
\end{abstract}

\section{Introduction}

In a Riemannian $2$-sphere $(S^2, g_{S^2})$, the {\em simple length widths} are defined as three positive min-max values $\{ \sigma_i(S^2,g_{S^2}) \}_{i=1}^3$ of the length functional. 
By the Lusternik-Schnirelmann theory \cite{lusternic1947topoligical} (see also Grayson \cite{grayson1989shortening}), each width is represented by a closed geodesic, and there are at least three distinct simple closed geodesics on $S^2$. 
Conversely, these widths determine the geometry of $S^2$ in a certain sense. 
For instance, Mazzucchelli and Suhr \cite{mazzucchelli2018characterization} confirmed a claim of Lusternik that the three simple length widths are all equal, i.e. $\{ \sigma_i(S^2,g_{S^2}) \}_{i=1}^3=\{ l \}$ if and only if all geodesics on $(S^2,g_{S^2})$ are simple closed and of equal length $l$. 
In particular, such a metric $g_{S^2}$ on $S^2$ is known as the {\em Zoll metric}, which was first constructed by Zoll \cite{zoll1903ueber} (see also \cite{guillemin1976radon}\cite{besse1978manifolds}).

In dimension $3$, one can define the {\em spherical area widths $\{\sigma_k(S^3,g_{S^3})\}_{k=1}^4$} of $(S^3,g_{S^3})$ by 
\[\sigma_k(S^3,g_{S^3}) :=  \inf _{\Phi \in \mc P_{k}'} \sup _{\Sigma \in \Phi} \Area(\Sigma), \qquad1\leq k\leq 4, \]
where $\mc P_k'$ is the set of $k$-sweepouts formed by embedded two-spheres that are continuous in the smooth topology (cf. \cite{ambrozio2024rigidity}*{\S 2}). 
It follows from the Simon-Smith min-max theory that each $\sigma_k(S^3,g_{S^3})$ is realized by the area of a disjoint union of some embedded minimal $2$-spheres. 
In \cite{ambrozio2024rigidity}, Ambrozio, Marques, and Neves also showed that the equality of the four spherical area widths characterizes those metrics on $S^3$ for which there exists a {\em Zoll family $\{\Sigma_a\}_{a\in \mb {RP}^3}$ of minimal $2$-spheres}. 
Namely, for any $p\in S^3$ and $2$-dimensional subspace $P\subset T_pS^3$, there is a unique $a\in \mb {RP}^3$ with $p\in\Sigma_a$ and $T_p\Sigma_a=P$. 
Moreover, in \cite{ambrozio2025metrics}, they also constructed metrics on higher-dimensional $S^{n+1}$ that contain Zoll families of minimal hyperspheres.

In the spirit of these results, the present paper investigates the {\em projective area widths} of a Riemannian $(RP^3, g_{RP^3})$ defined by 
\begin{align}\label{Eq: projective area widths}
	\sigma_0(RP^3,g_{RP^3}) :=  \inf _{\Sigma\in \mc S} \Area(\Sigma), \quad {\rm and}\quad \sigma_k(RP^3,g_{RP^3}) :=  \inf _{\Phi \in \mc P_{k}} \sup _{\Sigma \in \Phi} \Area(\Sigma),
\end{align}
where $1\leq k\leq 3$, $\mc S$ is the set of all embedded $RP^2$ in $RP^3$, and $\mc P_k$ is the set of $k$-sweepouts formed by $\Sigma\in\mc S$.  
Our first main result is to characterize the Riemannian metrics on $RP^3$ for which the four projective area widths are equal.

\begin{theorem}\label{main Thm: zoll metric on RP3}
	Let $(RP^3,g_{RP^3})$ be a Riemannian three-dimensional projective space. 
	Then, 
	\[ \sigma_0(RP^3,g_{RP^3}) = \sigma_1(RP^3,g_{RP^3}) = \sigma_2(RP^3,g_{RP^3})= \sigma_3(RP^3,g_{RP^3})\]
	if and only if $g_{RP^3}$ is a {\em surface Zoll metric} on $RP^3$ in the sense that there is a family $\{\Sigma_a\}_{a\in \mb {RP}^3}$ of embedded minimal projective planes in $(RP^3, g_{RP^3})$ so that for some $\alpha\in (0,1)$, 
	\begin{itemize}
		\item[(i)] $a\in \mb {RP}^3 \mapsto \Sigma_a\subset RP^3$ is a $C^1$ map into the space of $C^{3,\alpha}$ embeddings of $RP^2$ into $RP^3$;
		\item[(ii)] for any $p\in RP^3$ and $2$-dimensional subspace $P\subset T_p(RP^3)$, there is a unique $a\in \mb {RP}^3$ satisfying $p\in\Sigma_a$ and $T_p\Sigma_a=P$. 
	\end{itemize}
\end{theorem}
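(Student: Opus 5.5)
We follow the strategy of Ambrozio--Marques--Neves for spherical area widths on $S^3$, adapted to projective planes in $RP^3$; the two directions are handled separately.

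\emph{Sufficiency.} Let $\{\Sigma_a\}_{a\in\mb{RP}^3}$ be a surface Zoll family. We first show that every $\Sigma_a$ has the same area. The map $a\mapsto\Sigma_a$ is $C^1$, so $\frac{d}{dt}\Area(\Sigma_{a(t)})$ is the first variation of area along a minimal surface, which vanishes. Since $\mb{RP}^3$ is connected, $\Area(\Sigma_a)\equiv A$.

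Next we show that the family $\{\Sigma_a\}$ is a $3$-sweepout. Condition (ii) says that $(p,P)\mapsto a$ is a well-defined map from the Grassmann bundle $G_2(RP^3)$ to $\mb{RP}^3$, and it is continuous. Hence the family is a small perturbation, in the space of embedded $RP^2$'s, of the round family of totally geodesic projective planes. We would show this by deforming the metric to the round one, or by computing the relevant cohomology class through the incidence correspondence. Since $\lambda^3\neq 0$ for the generator $\lambda$ of $H^1(\mb{RP}^3;\mb Z_2)$, the family is a $3$-sweepout, and so $\sigma_3\le A$.

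For the reverse bound we use $\sigma_0\ge A$. Let $\Sigma$ be an area-minimizer among embedded $RP^2$'s; it exists and is a smooth minimal $RP^2$ by Meeks--Simon--Yau type results. Pick $p\in\Sigma$ and the member $\Sigma_a$ tangent to $\Sigma$ at $p$. If $\Sigma\neq\Sigma_a$, we use (ii) together with a nodal/intersection argument to conclude that they coincide. The argument runs as follows. Both are one-sided projective planes, and the Jacobi fields of the Zoll family span the $3$-dimensional kernel of the Jacobi operator with the correct nodal structure. The difference of $\Sigma$ and $\Sigma_a$ near a tangency point behaves like a solution of an elliptic equation, so the intersection set has $\ge 4$ branches near $p$. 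A counting argument against (ii) then shows that some other member $\Sigma_b$ meets $\Sigma$ tangentially to infinite order, which forces $\Sigma=\Sigma_b$. We conclude $\sigma_0=A\ge\sigma_3\ge\sigma_2\ge\sigma_1\ge\sigma_0$, so all four widths are equal.

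\emph{Necessity.} Suppose $\sigma_0=\sigma_1=\sigma_2=\sigma_3=:\sigma$. Let $\mc M$ be the set of embedded minimal $RP^2$'s of area $\sigma$ that are area-minimizing in $\mc S$; it is compact by curvature estimates for stable one-sided surfaces. We run Lusternik--Schnirelmann theory for Simon--Smith min-max restricted to projective planes, as in AMN. Equality $\sigma_k=\sigma_{k+1}$ forces the critical set at level $\sigma$ to carry nontrivial cohomology, namely $\lambda^3\neq0$ restricted to a neighbourhood of $\mc M$ in $\mc S$. Since each min-max value is realized, $\mc M$ is nonempty and, by the multi-parameter version, every $p\in RP^3$ together with every plane $P\subset T_pRP^3$ is attained by some element of $\mc M$.

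This gives existence in (ii). For uniqueness, suppose two distinct minimizers $\Sigma,\Sigma'\in\mc M$ are tangent at $p$. We cut and paste along their intersection to produce an embedded $RP^2$ of area $\le\sigma$ that is not smooth. This contradicts $\sigma_0$ being attained only by smooth surfaces, which follows from the regularity of minimizers.

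We then show $\mc M$ is a $3$-manifold parametrized by $\mb{RP}^3$. Every $\Sigma\in\mc M$ is degenerate stable with nullity exactly $3$, since $\mc M$ sweeps all of $G_2$ transversally. The implicit function theorem via Lyapunov--Schmidt on $C^{3,\alpha}$ graphs over $\Sigma$ then gives a $C^1$ local parametrization by the kernel. The resulting $3$-manifold is identified with $\mb{RP}^3$ through the covering map $G_2(RP^3)\to\mc M$ and its topology.

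\emph{Main obstacle.} The hardest step is the Lusternik--Schnirelmann part of necessity: showing that equal widths force the minimizing family to fill out every tangent plane, and doing so within the class of embedded $RP^2$'s, where the sweepouts are nonorientable and one-sided. First I would try to adapt the AMN argument verbatim: if some $(p,P)$ is missed, deform the metric-independent sweepout away from a neighbourhood of $\mc M$ to strictly lower the $\sigma_3$ level.
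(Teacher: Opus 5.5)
Your proposal has genuine gaps in both directions.

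\emph{The ``if'' direction.} It rests on two claims you do not prove.
\begin{enumerate}
\item \emph{That the Zoll family is a $3$-sweepout.} Nothing makes a surface Zoll metric close to the round one, and no deformation through Zoll metrics to the round metric is known, so the ``small perturbation of the round family'' step is unfounded. The incidence computation is only announced. Knowing $\lambda^3\neq 0$ in $H^3(\mb{RP}^3;\mb Z_2)$ helps only after you show the family pulls $\bar\lambda$ back to the generator.
\item \emph{That an area minimizer $\Sigma$ belongs to the family.} Your local observation is right: $\Sigma$ and the tangent member $\Sigma_a$ meet along at least four arcs at $p$. But the ``counting argument against (ii)'' producing a member tangent to infinite order does not follow from it. What is needed is the G\'alvez--Mira uniqueness theorem. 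There, the nodal directions of these differences define a line field on the lifted sphere whose isolated singularities all have negative index, which contradicts Poincar\'e--Hopf.
\end{enumerate}
The paper never needs the sweepout property. By Theorem~\ref{Thm: min-max}, each $\sigma_k^{G_\pm}$ is realized by disjoint $G$-invariant minimal spheres with multiplicities, and G\'alvez--Mira makes every component a member of the Zoll family. Distinct members always intersect and have index one (Proposition~\ref{Prop: Zoll family}). Hence there is a single component of multiplicity one, and $\sigma_k^{G_\pm}=W$ for every $k$.

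\emph{The ``only if'' direction.} Here the listed steps either presuppose the conclusion or fail, and the essential mechanism is missing.
\begin{enumerate}
\item ``By the multi-parameter version every $(p,P)$ is attained'' has no argument behind it.
\item ``Nullity exactly $3$ since $\mc M$ sweeps $G_2$ transversally'' is circular.
\item Lyapunov--Schmidt reduction only identifies nearby minimal surfaces with critical points of a reduced function on the kernel, so it yields no $3$-manifold of minimal surfaces.
\item The cut-and-paste uniqueness argument fails. A projective line does not separate $RP^2$, and resolving one-sided surfaces along their intersection gives no control of topology. For two round projective planes meeting along a line, either resolution is an embedded cornered sphere, which is not a competitor in $\mc S$.
\item $G_2(RP^3)\to\mc M$ is an $RP^2$-bundle, not a covering.
\item You cannot ``lower the $\sigma_3$ level,'' since every element of $\mc S$ has area at least $\sigma_0=\sigma_3$. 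The contradiction has to be topological.
\end{enumerate}
The missing route, as in the paper, has three stages.
\begin{enumerate}
\item Since $\sigma_0=\sigma_3$, min-max sequences for $\sigma_3$ concentrate on area-$W$ minimizers, and no critical element is isolated (Marques--Neves). Sharp's compactness then gives a Jacobi field $v$ with $v\circ g_-=-v$, so $\lambda_1(\pi(\wti\Sigma))=0$. By Proposition~\ref{Prop: 2nd eigenfunctions}, first eigenfields on $RP^2$ lift to second eigenfunctions on $S^2$, and Cheng's theorem then gives index one and nullity at most three.
\item The graphical perturbations of Steps~2--4 in the proof of Theorem~\ref{Thm: rigidity} push the sweepouts off the ``bad'' critical surfaces, where the reduced family is not entirely minimal. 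They land in the good set $\mc G$, a smooth $3$-manifold of index-one, nullity-three minimal spheres.
\item Since $\bar\lambda^3\neq 0$, some component $Z\subset\mc G$ is a closed $3$-manifold. The incidence map $T$ from pointed oriented spheres to $\wti G_2(S^3)$ is a homeomorphism, as in Ambrozio--Marques--Neves. Local injectivity comes from Cheng's theorem that a second eigenfunction cannot vanish together with its gradient. This gives existence and uniqueness in (ii) at once, and $\pi_1(Z)=\mb Z_2$ with geometrization gives $Z\cong\mb{RP}^3$.
\end{enumerate}
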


\begin{remark}
	We call the family $\{\Sigma_a\}_{a\in \mb {RP}^3}\subset \mc S$ satisfying (i) and (ii) a {\em Zoll family} of projective planes. 
	The existence of a nontrivial surface Zoll metric on $RP^3$ (also on $RP^{n}$ for $n\geq 4$) has been shown by Ambrozio and Guajardo in \cite{ambrozio2025equivariant}. 
	We also mention that by \cite{ambrozio2024rigidity}*{Theorem C}, the constant curvature metric on $RP^3$ is determined by its systole and $\sigma_2(RP^3,g_{RP^3})$. 
\end{remark}

The proof of Theorem \ref{main Thm: zoll metric on RP3} is based on the method in \cite{ambrozio2024rigidity} and the equivariant constructions in \cite{li2024lowgenus} for minimal $RP^2\subset RP^3$.  
As a key observation, we show that for the oriented double cover $\Sigma$ of a minimal $RP^2\subset RP^3$, every second eigenfunction $\phi_2$ of the Jacobi operator $L_\Sigma$ changes sign under the antipodal map $g_-$ on $\Sigma$, i.e. $\phi_2\circ g_-=-\phi_2$. 
In particular, for a degenerate stable minimal $RP^2\subset RP^3$, its oriented double cover $\Sigma$ has Morse index $1$ and nullity at most $3$, which generalizes a result of Cheng \cite{cheng1976eigenfunctions}. 

Note that the parameterization of the Zoll family $\{\Sigma_a\}_{a\in\mb{RP}^3}$ is not unique (Remark \ref{Rem: isomorphism of Jacobi map}). 
Nevertheless, the proof of Theorem \ref{main Thm: zoll metric on RP3} yields a {\em nondegenerate} parameterization so that the map $a\in\mb{RP}^3\mapsto \Sigma_a\subset RP^3$ is a $C^1$-embedding into the space of $C^{3,\alpha}$ embeddings of $RP^2$ into $RP^3$. 
As an application, we establish a weighted Crofton formula on $RP^3$ under surface Zoll metrics. 

\begin{theorem}\label{Main Thm: weighted Crofton formula}
	Let $g_{RP^3}$ be a surface Zoll metric on $RP^3$, and let $\{\Sigma_a\}_{a\in \mb {RP}^3}$ be the family of embedded minimal projective planes satisfying Theorem \ref{main Thm: zoll metric on RP3} (i) and (ii) with a nondegenerate parameterization. 
	Then, $\mc I:=\{(a,p)\in \mb {RP}^3\times RP^3: p\in \Sigma_a\}$ is a compact $C^1$-embedded hypersurface in $\mb {RP}^3\times RP^3$. 
	Additionally, there exist a continuous function $J:\mc I\to(0,\infty)$ and a probability measure $\mu_A$ on the parameter space $A:=\mb {RP}^3$ such that
	\begin{itemize}
		\item[(i)] $\int_{\Sigma_a} J(a,p) d\vol_{\Sigma_a}(p)=W$, where $W$ is the common value of the projective area widths of $(RP^3,g_{RP^3})$;
		\item[(ii)] for any $f\in C^0(RP^3)$, 
		\[\int_A\int_{\Sigma_a} f(p) J(a,p) d\vol_{\Sigma_a}(p)d\mu_A(a) = \frac{W}{\Vol(RP^3,g_{RP^3})} \int_{RP^3} f d\vol_{RP^3}  ;\]
		\item[(iii)] for any $C^1$-embedded compact curve $\gamma\subset RP^3$, 
		\[ \int_A \Big( \sum_{p\in \gamma\cap \Sigma_a} J(a,p)  \Big) ~d\mu_A(a) = \frac{W}{2\cdot\Vol(RP^3,g_{RP^3})} L_{g_{RP^3}}(\gamma) ,\]
	\end{itemize}
	where $d\vol_{\Sigma_a}$ and $d\vol_{RP^3}$ are the volume densities induced by $g_{RP^3}$ on $\Sigma_a$ and $RP^3$ respectively, and $L_{g_{{RP}^3}}(\gamma)$ is the length of $\gamma$ under the metric $g_{RP^3}$. 
\end{theorem}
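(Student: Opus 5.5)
The plan follows the Crofton-formula approach for Zoll families of minimal spheres in \cite{ambrozio2024rigidity}, adapted to projective planes.

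\textbf{Step 1: $\mc I$ is a compact $C^1$ hypersurface.} Write $\Sigma_a=F_a(RP^2)$ with $a\mapsto F_a$ the $C^1$ map of Theorem \ref{main Thm: zoll metric on RP3}(i). Locally near $(a_0,p_0)\in\mc I$, let $\nu$ be a unit normal field along $\Sigma_{a_0}$ and write nearby leaves as normal graphs $\exp(u_a\nu)$ with $u_{a_0}=0$. Then $\mc I$ is locally the zero set of
\[ G(a,p)=u_a(\pi(p))-t(p), \]
where $p=\exp_{\pi(p)}(t(p)\nu)$ in Fermi coordinates. The differential in $a$ at $a_0$ is $v\mapsto \phi_v(p_0)$, where $\phi_v=\partial_v u_a$ is the Jacobi field.

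Nondegeneracy of the parameterization means that $v\mapsto\phi_v$ is injective from $T_{a_0}\mb{RP}^3$ into the Jacobi fields. By (ii), for fixed $p_0$ the planes through $p_0$ tangent to a given $P$ are unique, so these Jacobi fields cannot all vanish at $p_0$; otherwise $\Sigma_{a_0}$ would lie in a degenerate configuration contradicting uniqueness. Hence $dG\neq0$, and the implicit function theorem gives a $C^1$ hypersurface. Compactness holds because $\mc I$ is closed in a compact space.

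In fact, Zoll (ii) gives more. The map $\mc I\ni(a,p)\mapsto (p,T_p\Sigma_a)\in Gr_2(T\,RP^3)$ is a continuous bijection of compact $5$-manifolds, hence a homeomorphism. Together with the Jacobi-field description, it is a $C^1$ diffeomorphism: the tangent plane varies via $\nabla\phi_v(p)$, and the map $v\mapsto(\phi_v(p),\nabla\phi_v(p))\in\mb R\times\mb R^2$ is an isomorphism on $T_a\mb{RP}^3$. I expect this isomorphism to be the main obstacle. I would derive it from nondegeneracy plus uniqueness in (ii): a $v$ with $\phi_v(p)=0$ and $\nabla\phi_v(p)=0$ would give, to first order, a second leaf tangent at $p$. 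To make this rigorous I would use the nodal structure of Jacobi fields, which form a $3$-dimensional space whose elements are odd under $g_-$ on the double cover, as in the key observation stated after Theorem \ref{main Thm: zoll metric on RP3}.

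\textbf{Step 2: Defining $J$ and $\mu_A$.} Identify $\mc I\cong Gr_2(T\,RP^3)$. The natural measure $\vol_{RP^3}\otimes(\text{round measure on fibers }RP^2)$ pulls back to a continuous positive density on $\mc I$. Using the fibration $\mc I\to A$ with fiber $\Sigma_a$, disintegrate this as $\rho(a,p)\,d\vol_{\Sigma_a}(p)\,da$. Set
\[ m(a)=\int_{\Sigma_a}\rho\, d\vol_{\Sigma_a},\qquad J=\frac{W\rho}{m(a)},\qquad d\mu_A=\frac{m(a)\,da}{\text{total}}. \]
This gives (i) immediately. For (ii), integrate first over the other fibration $\mc I\to RP^3$, whose fibers are the projective planes of $2$-planes at $p$; the pulled-back measure has constant fiber mass. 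The normalization constant is fixed by taking $f=1$, using (i) and $\mu_A(A)=1$.

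\textbf{Step 3: The curve formula (iii).} Apply the coarea formula to $\mc I_\gamma=\{(a,p):p\in\gamma\}$, fibered over $\gamma$ and over $A$. For each $p\in\gamma$, the planes $P\ni p$ are weighted by the angle factor between $\gamma'$ and $P$, namely $|\langle\gamma',n_P\rangle|$. Its average over $RP^2$ equals $1/2$, which gives the factor $\frac12$. Transversality holds for almost every $a$ by Sard, so the sum is a.e.\ finite and the formula follows, matching the constant against (ii) via the local product structure near $\gamma$.
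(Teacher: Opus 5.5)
Your outline is the paper's proof. You show $\mc I$ is a $C^1$ hypersurface and that $(a,p)\mapsto(p,T_p\Sigma_a)$ is a $C^1$ diffeomorphism onto the Grassmannian bundle; you then pull back the normalized product measure, disintegrate over $A$ to get $m$, $J=W\rho/m$ (the paper's $\Theta$) and $\mu_A\propto m$, and apply the area formula to $\mc I_\gamma\to A$ with angle average $1/2$. Two tidy-ups are needed. First, in Step 1, $dG\neq0$ already follows from the $p$-derivative ($\partial G/\partial t=-1$ in Fermi coordinates), so nondegeneracy and (ii) play no role there and the appeal to a ``degenerate configuration'' should be dropped. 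Second, the isomorphism you flag is closed as in Lemma~\ref{Lem: C1 diffeomorphism}: the lifted Jacobi functions on $\wti\Sigma_a$ are second eigenfunctions of $L_{\wti\Sigma_a}$, since $\tilde\lambda_2(\wti\Sigma_a)=0$ by Propositions~\ref{Prop: 2nd eigenfunctions} and~\ref{Prop: Zoll family}, so Cheng's theorem forbids a nonzero one from vanishing together with its gradient (oddness alone would not suffice), and nondegeneracy then gives $\dot a=0$.
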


We mention that the proof of Theorem \ref{Main Thm: weighted Crofton formula}(iii) can also be generalized to rectifiable curves in $RP^3$. 
Additionally, the weight function $J$ and the probability measure $\mu_A$ in Theorem \ref{Main Thm: weighted Crofton formula} depend only on the surface Zoll metric $g_{RP^3}$ and the nondegenerate parameterization $a\in \mb RP^3\mapsto \Sigma_a\subset RP^3$ of the Zoll family (Remark \ref{Rem: intrinsic and rectifiable curves}). 
We refer to \cite{ambrozio2026spheres}*{\S 5.2} for a related problem on the existence of a certain probability measure on the parameter space. 
Independently, Martins \cite{martins2026spectral} obtained the same rigidity characterization (Theorem \ref{main Thm: zoll metric on RP3}) by adapting the deformation scheme of Ambrozio-Marques-Neves \cite{ambrozio2024rigidity} in the space of embedded projective planes and using the topology of transitive families; our paper organizes the proof through $G_\pm$-equivariant lifts to $S^3$ and additionally establishes the weighted Crofton formula, which is not addressed there.

Combining the weighted Crofton formula with the result of Ambrozio-Marques-Neves \cite{ambrozio2024rigidity}*{Theorem C}, we have the following inequalities that relate the systole, the volume, and the common value of the projective area widths on $RP^3$ under a surface Zoll metric. 
Recall that the systole $\text{sys}(RP^3, g_{RP^3})$ is defined as the least length of a non-contractible loop in $(RP^3, g_{RP^3})$.
\begin{corollary}
    Let $g_{RP^3}$ be a surface Zoll metric on $RP^3$. 
    Using the notations in Theorem \ref{Main Thm: weighted Crofton formula}, let $j(a):=\min_{p\in\Sigma_a}J(a,p)$ for any $a\in A=\mb {RP}^3$, and let $\varsigma:=\int_{A}j(a)d\mu_A\in (0,1]$. 
    Then, 
    \[ \varsigma\cdot\frac{2\cdot\Vol(RP^3,g_{RP^3})}{W}\leq \text{sys}(RP^3,g_{RP^3}) \leq \sqrt{\frac{\pi W}{2}}. \]
    In particular, $8\varsigma^2\cdot(\Vol(RP^3,g_{RP^3}))^2\leq \pi W^3$ with equality if and only if $g_{RP^3}$ has constant sectional curvature. 
\end{corollary}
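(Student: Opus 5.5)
The plan is to prove the two inequalities separately and then combine them.

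\emph{Lower bound.} Let $\gamma$ be a shortest non-contractible loop, so $L_{g_{RP^3}}(\gamma)=\text{sys}(RP^3,g_{RP^3})$. We may take $\gamma$ to be a closed geodesic, hence a $C^1$-embedded compact curve (a minimizer in the nontrivial class is simple). Each $\Sigma_a$ is an embedded $RP^2$, which represents the nonzero class in $H_2(RP^3;\mathbb Z_2)$. The class of $\gamma$ is the nonzero element of $H_1(RP^3;\mathbb Z_2)$, and the $\mathbb Z_2$-intersection pairing of these two classes is $1$. Hence for every $a$ with $\gamma$ transverse to $\Sigma_a$, the set $\gamma\cap\Sigma_a$ has odd cardinality, in particular is nonempty. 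For the nontransverse $a$, $\gamma\cap\Sigma_a$ is still nonempty, since a loop disjoint from $\Sigma_a$ would lie in $RP^3\setminus\Sigma_a$, which is a ball, and would be contractible. So for every $a\in A$,
\[\sum_{p\in\gamma\cap\Sigma_a}J(a,p)\ \ge\ j(a).\]
If the sum is infinite there is nothing to check. Integrating against $\mu_A$ and applying Theorem \ref{Main Thm: weighted Crofton formula}(iii) gives
\[\varsigma\le \frac{W}{2\Vol(RP^3,g_{RP^3})}\,\text{sys}(RP^3,g_{RP^3}),\]
which is the left inequality.

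The bound $\varsigma\le1$ should follow from (i) and (ii). Taking $f\equiv1$ in (ii) gives $\int_A\int_{\Sigma_a}J\,d\vol\,d\mu_A=W$. Combined with (i), $j(a)\Area(\Sigma_a)\le W$, and $\Area(\Sigma_a)=W$ because each $\Sigma_a$ realizes the widths. This gives $j(a)\le1$, so $\varsigma\le1$. Positivity of $\varsigma$ follows from $J>0$ being continuous on the compact set $\mc I$.

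\emph{Upper bound.} I would invoke \cite{ambrozio2024rigidity}*{Theorem C}, which states that for any metric on $RP^3$,
\[\text{sys}^2\le \tfrac{\pi}{2}\,\sigma_2(RP^3,g_{RP^3}),\]
with equality exactly for constant curvature. The main point to verify is that the width used there coincides with (or is bounded by) our $\sigma_2$ defined by $RP^2$-sweepouts. Since all widths equal $W$ here, this gives $\text{sys}\le\sqrt{\pi W/2}$. I expect this identification of the normalizations of Theorem C with the present $\sigma_2$ to be the main obstacle. If it is not immediate, I would check it directly on the round metric: there $\text{sys}=\pi$ and $W=2\pi$, so $\sqrt{\pi W/2}=\pi$, which matches equality.

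\emph{Combining.} Squaring the chain of inequalities gives $4\varsigma^2\Vol^2/W^2\le \pi W/2$, that is, $8\varsigma^2\Vol^2\le\pi W^3$. If equality holds, then $\text{sys}=\sqrt{\pi W/2}$, and the rigidity case of Theorem C forces constant curvature.

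Conversely, for the round metric the Zoll family consists of the totally geodesic $RP^2$'s. By homogeneity, $J$ is constant and $\mu_A$ is the invariant measure, so (i) gives $J\equiv 1$ and hence $\varsigma=1$. Then $\Vol=\pi^2$ and $W=2\pi$ give $8\pi^4=\pi\cdot 8\pi^3$, so equality holds. This uses the stated dependence of $J$ and $\mu_A$ only on the metric and the parameterization, together with symmetry; in the nondegenerate parameterization, uniqueness up to isometry should make $J$ invariant.
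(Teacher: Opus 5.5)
Your proposal is correct and follows the paper's proof. The lower bound comes from the odd mod $2$ intersection number of a non-contractible $C^1$-embedded loop with each $\Sigma_a$, combined with the weighted Crofton formula (iii) for $f\equiv 1$. The upper bound and the equality case are taken from Theorem C of \cite{ambrozio2024rigidity}.

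You also check two things the paper leaves implicit: that $\varsigma\in(0,1]$, and the constant-curvature direction of the equality statement ($J\equiv 1$ and hence $\varsigma=1$ by homogeneity). Your worry about normalizations only affects the rigidity statement. The inequality $\text{sys}\le\sqrt{\pi W/2}$ already follows from Pu's inequality on any $\Sigma_a$, because a non-contractible loop in $\Sigma_a$ is non-contractible in $RP^3$.
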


The investigation of area widths is motivated by the inverse problem, which asks whether these invariants also encode geometric information about the ambient space.  
Research into such problems is of long-standing interest and significance. 
For instance, the celebrated isospectral problem \cite{kac1966can} is to recover the geometric properties of the underlying space from its spectrum of the Laplacian. 
In analogy with the spectrum of the Laplacian, Gromov \cite{gromov1988dimension} proposed the notion of the {\em volume spectrum} $\{\omega_k(M^{n+1},g_{M})\}_{k\in\mb N}$ for closed Riemannian manifolds $(M^{n+1}, g_{M})$ as the min-max values of the area functional in the space $\mc Z_n(M;\mb Z_2)$ of mod $2$ $n$-cycles (see also \cite{marques2017existence} for specific definitions). 
Subsequent research has revealed that the volume spectrum shares many properties analogous to the spectrum of the Laplacian, e.g. the nonlinear growth rate \cite{gromov2003isoperimetry}\cite{guth2009minimax} and the Weyl asymptotic law \cite{liokumovich2018weyl}. 
Moreover, analogous to the realization of Laplacian eigenvalues by the energy of eigenfunctions, it follows from Almgren-Pitts min-max theory that every volume spectrum is realized as the area of certain minimal hypersurfaces. 
By virtue of this deep connection, the volume spectrum has played a crucial role in the study of minimal hypersurfaces, including the resolution of Yau's conjecture (\cite{marques2017existence}\cite{song2018existence}), the spatial distribution for minimal hypersurfaces (\cite{irie2018density}\cite{marques2019equidistribution}\cite{song2021generic}), and the Morse theory for the area functional (\cite{marques2016morse}\cite{marques2021morse}\cite{zhou2020multiplicity}). 
Therefore, it is natural to study to what extent the volume spectrum (and the area widths) determine the geometry of the ambient manifold.

In dimension $n+1=2$, Chodosh and Mantoulidis \cite{chodosh2023p-widths} developed a min-max theory to show that the $k$-width (i.e. the $k$-th volume spectrum) of a Riemannian surface is realized by the lengths of immersed closed geodesics. 
In particular, they proved that the unit sphere $(\mb S^2_1,g_{\mb S^2_1})$ with the round metric has its $k$-width $\omega_k(\mb S^2_1, g_{\mb S^2_1})=2\pi \lfloor \sqrt{k}\rfloor$ for all $k\geq 1$. 
Based on this result, Marx-Kuo \cite{marx-kuo2025isospectral} disproved the $k$-widths isospectral problem for $(\mb S^2_1,g_{\mb S^2_1})$ by a family of Zoll metrics on $S^2$, which admit the same volume spectrum. 
In contrast to $S^2$, Ambrozio, Marques, and Neves \cite{ambrozio2024rigidity} showed that any compact Riemannian manifold with the same volume spectrum as the round $(\mb {RP}^2,g_{\mb {RP}^2})$ is isometric to it. 

\begin{acknowledgements}
	The author thanks professor Xin Zhou for bringing the rigidity problem to his attention and for helpful discussions. 
	T.W. is supported by the National Natural Science Foundation of China 12501076, Tianyuan Mathematics Frontier Key Special Program 12526203, the Natural Science Foundation of Shanghai 25ZR1402252, and Shanghai Qi-Guang Scholarship. 
\end{acknowledgements}

\noindent{\bf AI Disclosure}
{\it 
	Sections 2 and 3 were originally completed in June 2025 without AI assistance.
    The author used ChatGPT-5.6 Sol to explore specific questions concerning the tangent map $d\mc T$ (Lemma \ref{Lem: C1 diffeomorphism}) and the pullback density $(\pi_A^\gamma)^*d\mu_A$ (Theorem \ref{Thm: weighted Crofton formula}). 
	The tool was also used to suggest improvements to the grammar and phrasing of the manuscript.
	The author wrote the entire manuscript and takes full responsibility for its content. 
}

\section{Preliminaries}

Let $(RP^3, g_{RP^3})$ be a Riemannian $RP^3$ and $\pi: S^3\to RP^3$ be the double cover. 
Denote by $g_{S^3}$ the Riemannian metric on $S^3$ so that $\pi$ is a local isometry, and by $g_-$ the antipodal map on $S^3$. 
Then for any integer $k\geq 2$ and $\alpha\in(0,1)$, we use the following notation:
\begin{itemize}
	\item $ g_{\mb {RP}^3}$ and $ g_{\mb S^3}$ are the standard round metrics on $RP^3$ and $S^3$ respectively; 
	\item $G:=\mb Z_2=\{id, g_-\}$ is the deck transformation group of $\pi: S^3\to RP^3$ acting by isometries on $(S^3, g_{S^3})$;
	\item $G_+:=\{id\}\subset G$ is the index-two subgroup, and $G_-:=\{g_-\}$ is the coset of $G_+$; 
	\item $\mc S$ is the space of smooth embedded $RP^2$ in $RP^3$ with smooth topology;
	\item $\wcS_{G_\pm}:=\{\pi^{-1}(\Sigma)\subset S^3: \Sigma\in \mc S\}$ with smooth topology; 
    \item $\wcS^{k,\alpha}_{G_\pm}:=\{\mbox{$C^{k,\alpha}$ embedded $G$-invariant $S^2$ in $S^3$ with $\pi(S^2)\cong RP^2$}\}$;
    \item $\wcS_{G}:=\{\pi^{-1}(\Gamma)\subset S^3: \mbox{$\Gamma$ is an embedded $S^2$ or $RP^2$ in $RP^3$} \}$ with smooth topology;
	\item $\mk X(RP^3), \mk X(S^3)$ are the spaces of smooth vector fields on $RP^3$ and $S^3$ respectively;
	\item $ \mk X^G(S^3):=\{X\in \mk X(S^3): dg(X) = X, \forall g\in G\}$;
    \item $C^\infty_{G_\pm}(\wti\Sigma), C^{k,\alpha}_{G_\pm}(\wti\Sigma)$ are the spaces of smooth functions and $C^{k,\alpha}$ functions on $\wti\Sigma\in \wcS_{G_\pm}^{k,\alpha}$ that change signs under $g_-$, i.e. $h\circ g_-=-h$.
\end{itemize} 
Note that $g_-$ acts by an orientation-reversing isometry on any $\wti\Sigma\in \wcS_{G_\pm}$. 
Additionally, $\wcS_{G_\pm} \subsetneq \wcS_{G}$, and $d\pi : \mk X^G(S^3) \to \mk X(RP^3)$ is a bijection.

\subsection{Variations in $RP^3$ and $S^3$}
Given any $ \Sigma\in \mc S$ and $X\in \mk X(RP^3)$, let $\varphi_t$ be the diffeomorphisms generated by $X$. Then
\[ \delta\Sigma(X):= \left . \frac{d}{dt} \right|_{t=0} \Area(\varphi_t(\Sigma)) = -\int_{\Sigma} \langle H_{\Sigma}, X\rangle , \] 
where $H_{\Sigma}$ is the mean curvature vector field of $\Sigma$. 
If $\Sigma$ is stationary, i.e. $\delta\Sigma(X)\equiv 0$ for all $ X\in \mk X(RP^3)$, then $H_{\Sigma}=0$ and $\Sigma$ is a minimal $RP^2$ in $RP^3$. 
In this case, we have 
\begin{align*}
	\delta^2\Sigma(X):= \left. \frac{d^2}{dt^2}\right|_{t=0} \Area(\varphi_t(\Sigma)) &= Q_{\Sigma}(X^\perp, X^\perp) =  - \int_{\Sigma} \langle L_{\Sigma} X^\perp, X^\perp\rangle
	\\&=\int_{\Sigma} |\nabla^\perp X^\perp |^2 - \left( |A_{\Sigma}|^2|X^\perp|^2 + \Ric_{RP^3}(X^\perp,X^\perp) \right),
\end{align*}
where $X^\perp$ is the normal component of $X$ along $\Sigma$, $L_{\Sigma}: \mk X^\perp(\Sigma)\to \mk X^\perp(\Sigma)$ is the Jacobi operator defined on the space of normal vector fields $\mk X^\perp(\Sigma)$, and $A_{\Sigma}$ is the second fundamental form of $\Sigma$. 
Note that $L_{\Sigma}$ admits a discrete spectrum 
\[\lambda_1\leq\lambda_2\leq\dots\leq\lambda_k\leq\dots.\]
Then we define 
\begin{itemize}
	\item the $k$-th eigenvector field $X_k\in \mk X^\perp(\Sigma)$: $L_{\Sigma} X_k= - \lambda_k X_k$;
 	\item $\Index(\Sigma)$: the maximal dimension of a linear subspace of $\mk X^\perp(\Sigma)$ where $Q_{\Sigma}$ is negative definite, which is the number of negative eigenvalues of $L_{\Sigma}$ (counting multiplicities);
	\item $\Null(\Sigma)$: the dimension of the subspace $\Ker(L_{\Sigma})$;
\end{itemize}
We say $\Sigma$ is stable if $\Index(\Sigma)=0$, i.e. $\lambda_1\geq 0$. 

\medskip
Next, for any $\wti\Sigma \in \wcS_{G_\pm}$, we can similarly define the first variations $\delta\wti\Sigma$, the second variations $\delta^2\wti\Sigma$, the Jacobi operator $ L_{\wti\Sigma}$, the spectrum 
\[\tilde\lambda_1 < \tilde\lambda_2\leq\dots\leq \tilde\lambda_k\leq\dots,\] 
the eigenvector fields $ \wti X_k$ of $L_{\wti\Sigma}$, $\Index( \wti\Sigma)$ and $\Null(\wti\Sigma)$. 
We also say that $\wti\Sigma$ is {\em $G$-stationary in $S^3$} if $\delta \wti\Sigma(X) =0$ for all $X\in \mk X^G(S^3)$. 
By Palais's principle of symmetric criticality, $\wti\Sigma$ is $G$-stationary if and only if $\wti\Sigma$ is minimal, i.e. $H_{\wti\Sigma}=0$ (see for instance \cite[Lemma 6]{wang2022min}). 

In addition, given any $\wti\Sigma\in\wcS_{G_\pm}$, $\wti\Sigma$ admits a global unit normal $\tilde\nu$ so that $dg_-(\tilde\nu)=-\tilde\nu$. 
Hence, by identifying a normal vector field $\wti X= f\tilde\nu$ with $f\in C^\infty(\wti\Sigma)$, we can rewrite the Jacobi operator $L_{\wti\Sigma}: C^\infty(\wti\Sigma)\to C^\infty(\wti\Sigma)$ by
\[ L_{\wti\Sigma} f= \Delta_{\wti\Sigma} f + \left(|A_{\wti\Sigma}|^2 + \Ric_{S^3}(\tilde\nu,\tilde\nu)\right) f , \]
and define $\phi_k\in C^\infty(\wti\Sigma )$ as the $k$-th eigenfunction of $L_{\wti\Sigma}$, i.e. $L_{\wti\Sigma}\phi_k=-\tilde\lambda_k\phi_k$ and $\wti X_k=\phi_k\tilde\nu$. 

Moreover, we can also restrict $L_{\wti\Sigma}$ to the space of $G$-invariant normal vector fields 
\[\mk X^{\perp,G}(\wti\Sigma):=\{\wti X\in \mk X^{\perp}(\wti\Sigma): dg_-(\wti X)=\wti X\},\] 
and obtain the discrete equivariant spectrum
\[\tilde\lambda_1^G\leq \tilde\lambda_2^G\leq \dots\leq \tilde\lambda_k^G\leq \dots.\]
Then we have the {\em $k$-th equivariant eigenvector field} $ \wti X^G_k\in \mk X^{\perp,G}(\wti\Sigma)$ with $L_{\wti\Sigma}  \wti X^G_k = - \tilde\lambda_k^G \wti X^G_k$, and the {\em $k$-th equivariant eigenfunction} $\phi_k^G:=\langle \wti X^G_k, \tilde \nu\rangle$ so that $\phi_k^G\circ g_- = - \phi_k^G$. 
Clearly, for $\Sigma=\pi(\wti\Sigma)$,
\begin{align}
	\tilde \lambda_k^G = \lambda_k \quad{\rm and}\quad d\pi( \wti X^G_k)= X_k,
\end{align}
because $\pi:S^3\to RP^3$ is a local isometric double cover. 
We also denote by $\Index_G( \wti\Sigma)$ the number of negative equivariant eigenvalues of $L_{\wti\Sigma}$, which can be similarly defined for $\wti\Sigma\in\wcS_G$. 

\medskip
We next show that the second eigenfunctions of a minimal sphere $\wti \Sigma\in\wcS_{G_\pm}$ are in one-to-one correspondence with the first eigenvector fields of the minimal projective plane $\Sigma=\pi(\wti\Sigma)\in\mc S$. 
\begin{proposition}\label{Prop: 2nd eigenfunctions}
	Let $\wti\Sigma\in\wcS_{G_\pm}$ be a minimal $2$-sphere in $(S^3,g_{S^3})$, and $\Sigma:=\pi(\wti\Sigma)\in\mc S$ be a minimal projective plane in $(RP^3,g_{RP^3})$. 
	Then, using the above notation,
	\[\tilde\lambda_1< \tilde \lambda_2=\tilde \lambda_1^G =\lambda_1,\]
	and every second eigenfunction $\phi_2$ of $\wti \Sigma$ satisfies 
	\[\phi_2(g_-\cdot x)=-\phi_2(x),\quad \forall x\in\wti\Sigma.\]
	Additionally, we have a well-defined bijection
	\[d\pi: \{ \wti X\in \mk X^\perp(\wti\Sigma): L_{\wti\Sigma}  \wti X =- \tilde \lambda_2  \wti X \} \to \{ X\in\mk X^\perp( \Sigma): L_{\Sigma} X = -\lambda_1 X \} .\]
	In particular, if $\lambda_1=0$, then $\Index(\wti\Sigma)=1$ and $1\leq \Null(\wti\Sigma)\leq 3$. 
\end{proposition}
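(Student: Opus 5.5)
Since $g_-$ is an isometry of $(S^3,g_{S^3})$ preserving $\wti\Sigma$ with $dg_-(\tilde\nu)=-\tilde\nu$, the potential $|A_{\wti\Sigma}|^2+\Ric_{S^3}(\tilde\nu,\tilde\nu)$ is $g_-$-invariant. Hence $L_{\wti\Sigma}$ commutes with $f\mapsto f\circ g_-$, and $L^2(\wti\Sigma)$ splits $L_{\wti\Sigma}$-invariantly as $L^2_{+}\oplus L^2_{-}$, the even and odd functions under $g_-$. The odd part corresponds exactly to the $G$-invariant normal fields $\wti X=f\tilde\nu$, because $dg_-(f\tilde\nu)=(f\circ g_-)\cdot(-\tilde\nu)$. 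So the spectrum on $L^2_-$ is $\{\tilde\lambda^G_k\}=\{\lambda_k\}$, and $d\pi$ identifies $\lambda_1$-eigenfields on $\Sigma$ with odd $\lambda_1$-eigenfunctions on $\wti\Sigma$. The even part corresponds to functions on $\Sigma$ twisted by the orientation character, i.e. sections of the trivial line bundle on the nonorientable $\Sigma$.

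The first eigenfunction $\phi_1>0$ is simple, hence even, so $\tilde\lambda_1<\tilde\lambda_1^G=\lambda_1$. It remains to show that every eigenfunction with eigenvalue $\le\lambda_1$ other than $\phi_1$ is odd. Equivalently, I will show the even spectrum has no eigenvalue in $(\tilde\lambda_1,\lambda_1]$. Then $\tilde\lambda_2=\min(\text{second even eigenvalue},\lambda_1)=\lambda_1$, and the $\tilde\lambda_2$-eigenspace consists only of odd functions. This gives $\phi_2\circ g_-=-\phi_2$ and the bijection under $d\pi$.

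\textbf{Main step (obstacle).} Suppose $\psi$ is an even eigenfunction with eigenvalue $\mu\in(\tilde\lambda_1,\lambda_1]$, and let $u$ be a first odd eigenfunction, i.e. the lift of the first eigenfield $X_1$ on $\Sigma$. I would use nodal-domain arguments on the sphere $\wti\Sigma$.
\begin{itemize}
\item Since $\psi\perp\phi_1$, $\psi$ changes sign. By Courant together with Cheng's result, on $S^2$ an eigenfunction of the second eigenvalue has exactly two nodal domains, and its nodal set is a union of curves.
\item Because $u$ is a first eigenfunction on the quotient bundle, $X_1$ has the least possible number of zeros in a suitable sense. Its lift $u$ is odd, so its nodal set $N$ is a $g_-$-invariant curve system separating $\wti\Sigma$ into pieces swapped by $g_-$.
\item The first Dirichlet eigenvalue of each component of $\{u\neq0\}$ equals $\lambda_1$.
\item For the even $\psi$, the nodal set is $g_-$-invariant and $g_-$ preserves the sign of $\psi$. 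So each of $\{\psi>0\}$ and $\{\psi<0\}$ is $g_-$-invariant.
\item On $S^2$ with the fixed-point-free orientation-reversing involution $g_-$, a $g_-$-invariant open set that is connected with connected complement cannot be a disk. Indeed, an invariant Jordan curve would bound two disks swapped by $g_-$, which contradicts the invariance of the domains.
\item Hence at least one nodal domain of $\psi$ is not simply connected. The two domains then cannot be exactly two while both are invariant, unless the nodal set contains an invariant annulus structure. I'd then compare via domain monotonicity: some nodal domain of $\psi$ is contained in a region whose lowest Dirichlet eigenvalue is at least that of a domain of $u$, forcing $\mu>\lambda_1$ or equality with a rigid configuration. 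The equality case would be excluded by unique continuation.
\end{itemize}
I expect getting this topological/variational comparison precise to be the hardest part. If it fails, I would fall back on a direct test-function argument, building from $\psi$ restricted to half of $\wti\Sigma$ an odd competitor with Rayleigh quotient $<\lambda_1$.

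\textbf{Consequences.} If $\lambda_1=0$, then $\tilde\lambda_1<0=\tilde\lambda_2$, so $\Index(\wti\Sigma)=1$. The nullity equals the multiplicity of $\tilde\lambda_2$, which by Cheng's multiplicity bound \cite{cheng1976eigenfunctions} for the second eigenvalue of a Schrödinger operator on $S^2$ is at most $3$. It is at least $1$ because $\lambda_1=0$ has an eigenfield.
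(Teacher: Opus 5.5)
Your overall route is the paper's, and the following parts are fine:
\begin{itemize}
\item the even/odd splitting under $g_-$;
\item the observation that the simple positive $\phi_1$ is even, so that $\tilde\lambda_1<\tilde\lambda_2\le\lambda_1$;
\item the reduction to excluding a nonzero even eigenfunction $\psi$ at level $\tilde\lambda_2$ (choose $\mu$ minimal in $(\tilde\lambda_1,\lambda_1]$, so that $\mu=\tilde\lambda_2$ and Courant gives exactly two nodal domains);
\item the index/nullity count via Cheng's multiplicity bound.
\end{itemize}
The gap is that the main step is never completed. You correctly see that $g_-$ cannot preserve a disk, but you do not show that the nodal domains of $\psi$ are disks. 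Instead you allow for non-simply-connected nodal domains and an ``invariant annulus structure,'' and propose a domain-monotonicity comparison with the odd eigenfunction $u$. Nothing supports that comparison. There is no reason a nodal domain of $\psi$ should lie inside a region controlled by a nodal domain of $u$, since the two nodal sets are unrelated, and the ``rigid equality configuration'' is never identified. The fallback odd test function is also left unspecified, and its natural version is unavailable. The nodal domains of an even $\psi$ are themselves $g_-$-invariant, so $\psi$ cannot be cut along its nodal set into pieces swapped by $g_-$ and then antisymmetrized.

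The missing ingredient is the nodal structure of second eigenfunctions on $S^2$, which is exactly what the paper takes from Cheng: the nodal set of $\psi$ contains no critical points and is a single smooth simple closed curve. To see this, note that a critical zero is a vertex where at least four nodal arcs meet, and Euler's formula on $S^2$ then forces at least three complementary components. A disconnected nodal set likewise gives at least three components. Both contradict Courant.

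In particular, your intermediate conclusion ``at least one nodal domain of $\psi$ is not simply connected'' is already incompatible with $\psi$ having exactly two nodal domains, so you were one line from the end. With the nodal set a Jordan curve, both nodal domains are disks whose closures are closed disks (Schoenflies). Evenness of $\psi$ gives $g_-(\overline{\Omega_+})=\overline{\Omega_+}$, and Brouwer produces a fixed point of the free deck transformation $g_-$, which is your own invariant-Jordan-curve observation. The paper runs precisely this argument on $\Phi_2=\phi_2+\phi_2\circ g_-$, the even part of a second eigenfunction, which is equivalent to your formulation. The detour through $u$ should simply be dropped.
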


\begin{proof}
	Firstly, note that every $ X\in \mk X^\perp(\Sigma)$ corresponds uniquely to a normal vector field $\wti X\in \mk X^{\perp,G}(\wti \Sigma)$ with $d\pi (\wti X) =  X$ so that $h=\langle \wti X,  \tilde \nu\rangle\in C^\infty(\wti\Sigma)$ satisfies $h(g_-\cdot x)=-h(x)$ for all $x\in\wti \Sigma$. 
	Hence, every first eigenvector field $ X_1$ of $\Sigma$ can be lifted as a $G$-invariant eigenvector field $\wti X$ and an eigenfunction $h\in C^\infty_{G_\pm}(\wti\Sigma)$ of $\wti \Sigma$.  
	In particular, $\lambda_1=\tilde\lambda_1^G=\tilde \lambda_k$, where $k\geq 1$ is the smallest integer so that there exists a $k$-th eigenfunction $\phi_k$ of $\wti\Sigma$ with $\phi_k\in C^\infty_{G_\pm}(\wti\Sigma)$. 
	
	Then, since the first eigenvalue $\tilde \lambda_1$ of $\wti\Sigma$ has multiplicity one and the first eigenfunction $\phi_1$ has a fixed sign, we know $\phi_1\notin C^\infty_{G_\pm}(\wti\Sigma)$, and thus $\tilde\lambda_1 < \tilde \lambda_2 \leq \lambda_1$. 
	Next, we make the following claim.
	
	\begin{claim}\label{Claim: 2nd eigenfunctions}
		Every second eigenfunction $\phi_2$ of $\wti\Sigma$ satisfies $\phi_2(g_-\cdot x)=-\phi_2(x)$ for all $ x\in\wti\Sigma$. 
	\end{claim}
	\begin{proof}[Proof of Claim \ref{Claim: 2nd eigenfunctions}]
		Suppose there exists $x_0\in\wti\Sigma$ so that $\phi_2(g_-\cdot x_0) + \phi_2(x_0) \neq 0$. 
		Then, \[\Phi_2:=\phi_2+ \phi_2\circ g_-\]
		is a non-trivial (as $\Phi_2(x_0)\neq 0$) $G$-invariant smooth function on $\wti\Sigma$, which is also a second eigenfunction of $\wti\Sigma$. 
		Hence, by Cheng \cite{cheng1976eigenfunctions}, $\Phi_2$ has two nodal domains, and $\nabla^{\wti\Sigma}\Phi_2$ does not vanish on the nodal set $\{\Phi_2=0\}$. 
		It then follows that $\{\Phi_2=0\}$ is a simple closed curve on the $2$-sphere $\wti\Sigma$, and the nodal domains 
		\[\Omega_+ :=\{x\in\wti\Sigma: \Phi_2(x)>0\} \quad{\rm and} \quad \Omega_- :=\{x\in\wti\Sigma: \Phi_2(x)<0\} \]
		are both homeomorphic to a $2$-dimensional disk $\mb D$. 
		Since $\Phi_2$ is $G$-invariant, we have $g_-\cdot \Omega_+=\Omega_+$, which implies that $g_-$ has a fixed point in $\Omega_+\cong \mb D$ by Brouwer's fixed point theorem. 
		This contradicts that $g_-$ is a deck transformation of $\pi:S^3\to RP^3$. 
	\end{proof}
	
	Therefore, $\tilde\lambda_2 = \lambda_1$, and $d\pi$ is a well defined bijection from the second eigenspace of $\wti\Sigma\in\wcS_{G_\pm}$ to the first eigenspace of $\Sigma\in\mc S$. 
	In particular, if $\lambda_1=0$, then $\tilde\lambda_1<\tilde \lambda_2=0$, which implies that $\Index(\wti \Sigma)=1$ and $1\leq \Null(\wti\Sigma)\leq 3$ by Cheng \cite{cheng1976eigenfunctions}. 
\end{proof}

The following proposition is an equivariant generalization of \cite{ambrozio2024rigidity}*{Proposition 2.3}. 

\begin{proposition}\label{Prop: nullily and graph}
	Let $\wti\Sigma\in\wcS_{G_\pm}$ be minimal in $(S^3,g_{S^3})$, $k\geq 2$ be an integer, and $\alpha\in (0,1)$. 
	Then there exists a neighborhood $\mc W\subset \ker(L_{\wti\Sigma})$ of the origin and a smooth embedding $\varphi: \mc W\to C^{k,\alpha}(\wti\Sigma)$ with a small constant $\eta_{\wti\Sigma}>0$ so that
	\begin{itemize}
		\item $\varphi(0)=0$, $d\varphi(0)=Id$;
		\item for any closed minimal surface $\wti\Sigma'$ in $(S^3,g_{S^3})$ with $\mf F(|\wti\Sigma|,|\wti\Sigma'|)\leq \eta_{\wti\Sigma}$, there exists $z\in\mc W$ satisfying 
			\begin{align}
				\wti\Sigma'=\Graph(\varphi(z)) := \exp^\perp_{\wti\Sigma}(\varphi(z) \tilde\nu), 
			\end{align}
			where $\exp^\perp_{\wti\Sigma}$ is the normal exponential map. 
	\end{itemize}
	In particular, if $0=\tilde \lambda_2=\tilde\lambda_1^G$, then we can take $\varphi(\mc W)\subset C^{k,\alpha}_{G_\pm}(\wti\Sigma)$. 
\end{proposition}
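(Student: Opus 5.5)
The plan is a Lyapunov--Schmidt reduction, as in \cite{ambrozio2024rigidity}*{Proposition 2.3}, followed by a check that the construction is compatible with the $G$-action. Write the minimal surface operator on graphs over $\wti\Sigma$ as
\[\mc M: C^{k,\alpha}(\wti\Sigma)\supset U\to C^{k-2,\alpha}(\wti\Sigma),\qquad \mc M(u):=H(\Graph(u)),\]
the mean curvature of $\exp^\perp_{\wti\Sigma}(u\tilde\nu)$ pulled back to $\wti\Sigma$ and weighted so that $\mc M$ is the Euler--Lagrange operator of the area. Then $\mc M$ is smooth, $\mc M(0)=0$, and $D\mc M(0)=L_{\wti\Sigma}$ is self-adjoint Fredholm with finite-dimensional kernel $K:=\ker(L_{\wti\Sigma})$. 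Let $\Pi$ denote the $L^2$-orthogonal projection onto $K^\perp$.

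\emph{Reduction.} Apply the implicit function theorem to
\[F(z,w):=\Pi\,\mc M(z+w),\qquad (z,w)\in K\times\big(K^\perp\cap C^{k,\alpha}\big).\]
Its derivative in $w$ at $(0,0)$ is the isomorphism $L_{\wti\Sigma}\colon K^\perp\cap C^{k,\alpha}\to K^\perp\cap C^{k-2,\alpha}$. This gives a smooth map $w(z)$ with $w(0)=0$ and $dw(0)=0$. Set $\varphi(z):=z+w(z)$; then $\varphi(0)=0$ and $d\varphi(0)=Id$, so $\varphi$ is an embedding after shrinking $\mc W$.

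\emph{Capturing nearby minimal surfaces.} Let $\wti\Sigma'$ be closed minimal with $\mf F(|\wti\Sigma|,|\wti\Sigma'|)\le\eta_{\wti\Sigma}$. Since $\wti\Sigma$ has multiplicity one and is smooth embedded, Allard regularity (or the compactness of Sharp/Schoen--Simon type with multiplicity one) makes $\wti\Sigma'$ a normal graph $\Graph(u)$ over $\wti\Sigma$ with $\|u\|_{C^{k,\alpha}}$ small, once $\eta_{\wti\Sigma}$ is small. Writing $u=z+w$ with $z\in K$ and $w\in K^\perp$, we have $\mc M(u)=0$, hence $F(z,w)=0$, and uniqueness in the implicit function theorem forces $w=w(z)$. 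This is the step I expect to be the main obstacle: one needs a quantitative statement that $\mf F$-closeness of minimal surfaces implies $C^{k,\alpha}$-graphical closeness. I would argue by contradiction using a sequence with $\mf F\to0$, applying the curvature estimates available for surfaces varifold-close to a smooth multiplicity-one limit and then elliptic regularity. This is exactly the input already used in \cite{ambrozio2024rigidity}, so I would cite it.

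\emph{Equivariance.} Since $g_-$ is an isometry of $(S^3,g_{S^3})$ preserving $\wti\Sigma$ and $dg_-(\tilde\nu)=-\tilde\nu$, we have $g_-(\Graph(u))=\Graph(-u\circ g_-)$. Therefore $\mc M$ commutes with the involution $\tau(u):=-u\circ g_-$, i.e.\ $\mc M(\tau u)=\tau\,\mc M(u)$. Now $\tau$ is an $L^2$-isometry that commutes with $L_{\wti\Sigma}$, so it preserves $K$ and $K^\perp$ and commutes with $\Pi$. By uniqueness in the implicit function theorem, $w(\tau z)=\tau w(z)$. Finally, assume $0=\tilde\lambda_2=\tilde\lambda_1^G$. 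Then $\tilde\lambda_1<0$, so $K$ is exactly the second eigenspace, and Proposition \ref{Prop: 2nd eigenfunctions} shows $K\subset C^\infty_{G_\pm}(\wti\Sigma)$, i.e.\ $\tau z=z$ for all $z\in K$. Hence $\tau w(z)=w(z)$, so $\varphi(z)\in C^{k,\alpha}_{G_\pm}(\wti\Sigma)$. Alternatively, one can run the whole reduction inside the closed subspace $C^{k,\alpha}_{G_\pm}$ from the start, since it is $\tau$-fixed.
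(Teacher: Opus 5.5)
Your proposal is correct and follows essentially the paper's route. The paper combines Allard regularity with White's Theorem~1.3, which is the Lyapunov--Schmidt reduction you carry out by hand, and it gets the equivariant statement by reapplying that theorem on the Banach subspace of $G$-equivariant normal sections (your ``alternatively'' route). Your main argument, via uniqueness in the implicit function theorem, is slightly sharper: it shows the same equivariant $\varphi$ still captures every nearby closed minimal surface, so these are automatically $G$-invariant, whereas reducing inside $C^{k,\alpha}_{G_\pm}(\wti\Sigma)$ only captures the $G$-invariant ones, which is all the paper uses later.
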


\begin{proof}
	By Allard regularity \cite{allard1972first}, if $\wti\Sigma'$ is a closed minimal surface that is sufficiently close to $\wti\Sigma$ in the varifold topology, then it is also close to $\wti\Sigma$ in the smooth topology. 
	Hence, the first part of the proposition follows from White \cite{white1991space}*{Theorem 1.3} (cf. \cite{ambrozio2024rigidity}*{Proposition 2.3}). 
	Moreover, note that \cite{white1991space}*{Theorem 1.3} was proved in general Banach spaces and can be applied to $\mk X_{k,\alpha}^\perp(\wti\Sigma)$, the space of $C^{k,\alpha}$ sections of the normal bundle $N\wti\Sigma$. 
	Hence, if $0=\tilde\lambda_2(\wti\Sigma)=\tilde\lambda_1^G(\wti\Sigma)$, then $\ker(L_{\wti\Sigma})\subset \mk X^{\perp,G}(\wti\Sigma)$, and we can apply \cite{white1991space}*{Theorem 1.3} to the Banach subspace $\mk X_{k,\alpha}^{\perp,G}(\wti\Sigma)\subset \mk X_{k,\alpha}^{\perp}(\wti\Sigma)$ of $G$-equivariant elements. 
	This shows the last statement since $h\tilde \nu\in \mk X^{\perp,G}_{k,\alpha}(\wti\Sigma)$ if and only if $h\in C^{k,\alpha}_{G_\pm}(\wti\Sigma)$. 
\end{proof}

\subsection{$G_\pm$-spherical area widths}
We now collect some notation in geometric measure theory from \cite{federer2014geometric}\cite{simon1983lectures}\cite{pitts2014existence}. 
\begin{itemize}
	\item $\mc Z_2(S^3;\mb Z_2)$: the space of $2$-dimensional mod $2$ flat chains $T=\bd U$ for some $3$-dimensional mod $2$ flat chain $U$ in $S^3$;
	\item $|T|$ and $\|T\|$: the integral varifold and the Radon measure in $S^3$ associated with $T\in \mc Z_2(S^3;\mb Z_2)$; 
	\item $\mc V_2(S^3)$: the closure of the space of $2$-dimensional rectifiable varifolds supported in $S^3$ in the varifold topology;
	\item $\mf F$: the $\mf F$-metric defined in the book of Pitts \cite{pitts2014existence}*{Page 66}, which induces the varifold weak topology on any bounded subset of $\mc V_2(S^3)$; 
	\item $\mc F,\mf M,\mf F$: the flat metric, mass norm, and the $\mf F$-metric on $\mc Z_2(S^3;\mb Z_2)$ respectively.
\end{itemize}
The space $\mc Z_2(S^3;\mb Z_2)$ is weakly homotopically equivalent to $\mb {RP}^\infty$ (by \cite{almgren1962homotopy}). Hence, 
\[ H^k(\mc Z_2(S^3;\mb Z_2); \mb Z_2) = \mb Z_2 = \{0,\bar \lambda^k\},\]
where $\bar\lambda$ is the generator of $H^1(\mc Z_2(S^3;\mb Z_2);\mb Z_2) = \mb Z_2$, and $\bar\lambda^k$ is the $k$-th cup product of $\bar\lambda$ with itself. 
Let $i:\wcS_{G_\pm} \to \mc Z_2(S^3;\mb Z_2)$ be the natural inclusion, which is continuous in the $\mf F$-topology. 

\begin{definition}
	For any integer $k\geq 1$, a {\em smooth spherical $(G_\pm, k)$-sweepout} is a continuous map $\Phi: X \to \wcS_{G_\pm}$, where $X$ is any finite-dimensional compact simplicial complex, so that
	\[ (i\circ\Phi)^*(\bar\lambda^k)\neq 0 \in H^k(X;\mb Z_2). \]
	Denote by $\mc {P}_k^{G_\pm}$ the set of smooth spherical $(G_\pm, k)$-sweepouts. 
	Additionally, we also define the {\em $G_\pm$-spherical area widths} of $(S^3, g_{S^3})$ by
	\[\sigma_0^{G_\pm} (S^3, g_{S^3}):= \inf_{\Sigma\in \wcS_{G_\pm}} \Area(\Sigma) \qquad{\rm and }\qquad \sigma_k^{G_\pm}(S^3, g_{S^3}):= \inf_{\Phi\in\mc P_k^{G_\pm}} \sup_{x\in{\rm dmn}(\Phi)} \Area (\Phi(x)), \]
	where $1\leq k\leq 3$. 
\end{definition}

Given $k_0\geq 2$ and $\alpha_0\in (0,1)$, one can use $\wcS_{G_\pm}^{k_0,\alpha_0}$ in place of $\wcS_{G_\pm}$ to similarly define the $C^{k_0,\alpha_0}$ spherical $(G_\pm, k)$-sweepout. 
For simplicity, we refer to them as spherical $(G_\pm, k)$-sweepouts. 

\begin{remark}
	After parameterizing $S^3$ by $\{(x_1,x_2,x_3,x_4)\in \mb R^4: \sum_{i=1}^4x_i^2=1\}$, the map
		\[\Phi: ([a_1:a_2:a_3:a_4])\in \mb {RP}^3 \mapsto \{x\in S^3: a_1x_1+a_2x_2+a_3x_3+a_4x_4=0\}\in \wcS_{G_\pm}\]
	is a smooth spherical $(G_\pm, 3)$-sweepout. 
	Hence, $\mc {P}_k^{G_\pm}\neq \emptyset$ for $1\leq k\leq 3$, and 
	\[ \sigma_0^{G_\pm} (S^3, g_{S^3})\leq \sigma_1^{G_\pm} (S^3, g_{S^3}) \leq \sigma_2^{G_\pm} (S^3, g_{S^3})\leq \sigma_3^{G_\pm} (S^3, g_{S^3}) \] 
	are well defined. 
	In addition, Hatcher's proof of the Smale conjecture in \cite{hatcher1983smale} indicates that the space of embedded $S^2\subset S^3$ deformation retracts onto the space of great spheres, which is homeomorphic to $\mb {RP}^3$. 
	Hence, $(i\circ\Phi)^*(\bar\lambda^4)=0$ and $\mc {P}_{k\geq 4}^{G_\pm}=\emptyset$ (see also \cite{ambrozio2024rigidity}*{Proposition 2.1}). 
\end{remark}


\begin{remark}
	Fix any $\Sigma_0\in \mc S$. Let $\mc Z_2'(RP^3;\mb Z_2)$ be the space of $2$-dimensional mod $2$ cycles $T$ in $RP^3$ so that $T=\llbracket\Sigma_0\rrbracket + \bd U$ for some $3$-dimensional mod $2$ flat chain $U$ in $RP^3$. Every such $T$ represents the non-trivial $\mb Z_2$-homology class in $RP^3$. 
	By \cite{wang2026multiplicity}*{\S 5.2}, $\mc Z_2'(RP^3;\mb Z_2)$ is also weakly homotopically equivalent to $\mb {RP}^\infty$. 
	Hence, using the inclusion $i: \mc S\to \mc Z_2'(RP^3;\mb Z_2)$, we can similarly define the {\em smooth projective $k$-sweepout} as a continuous map $\Phi: X\to \mc S$ satisfying $(i\circ \Phi)^*(\bar\lambda^k)\neq 0 \in H^k(X;\mb Z_2)$, where $\bar\lambda$ is the generator of $H^1(\mc Z_2'(RP^3;\mb Z_2);\mb Z_2)$ and $1\leq k\leq 3$. 
	Then, the {\em projective area widths} of $(RP^3, g_{RP^3})$ 
	\[\sigma_0(RP^3,g_{RP^3})\leq \sigma_1(RP^3,g_{RP^3})\leq \sigma_2(RP^3,g_{RP^3})\leq \sigma_3(RP^3,g_{RP^3})\]  
	are defined as in \eqref{Eq: projective area widths}. 
	We mention that $\sigma_k^{G_\pm}(S^3, g_{S^3})= 2\sigma_k(RP^3, g_{RP^3})$ for $0\leq k \leq 3$. 
\end{remark}

\begin{definition}
    For $1\leq k\leq 3$ and a sequence $\{\Phi_i\}_{i\in\mb N}\subset \mc P_k^{G_\pm}$, the {\em image set} of $\{\Phi_i\}_{i\in\mb N}$ is defined by 
    \[\mf\Lambda (\{\Phi_i\}_{i\in\mb N}):=\{V\in\mc V_2(S^3): \exists i_j\to\infty, x_{i_j}\in{\rm dmn}(\Phi_{i_j}) ~s.t.~V=\lim_{j\to\infty}|\Phi_{i_j}(x_{i_j})|\}.\]
    Additionally, we call $\{\Phi_i\}_{i\in\mb N}\subset \mc P_k^{G_\pm}$ a {\em min-max sequence} for $\sigma_k^{G_\pm}$ if 
    \[\limsup_{i\to\infty} \sup_{x\in{\rm dmn}(\Phi_i)} \Area(\Phi_i(x)) = \sigma_k^{G_\pm}(S^3,g_{S^3}).\]
    The {\rm critical set} of $\{\Phi_{i}\}_{i\in\mb N}$ is defined by 
    \[\mf C(\{\Phi_{i}\}_{i\in\mb N}) := \{V\in \mf \Lambda(\{\Phi_{i}\}_{i\in\mb N}): \|V\|(S^3)=\sigma_k^{G_\pm}(S^3,g_{S^3})\}.\]
\end{definition}

The following result follows from the equivariant minimizing/min-max constructions \cite{li2024lowgenus}\cite{wang2023equivariant}\cite{franz2023index}. 

\begin{theorem}\label{Thm: min-max}
	For each $0\leq k \leq 3$, there exist a disjoint family $\{\wti\Sigma^{(k)}_i\}_{i=0}^{l_k}\subset \wcS_G$, ($l_0=0$, $l_k\geq 0$), of smooth embedded minimal surfaces in $(S^3,g_{S^3})$ and $\{m_i^{(k)}\}_{i=0}^{l_k}\subset \mb Z_+$ so that
	\[\sigma_0^{G_\pm}(S^3, g_{S^3})= \Area(\wti\Sigma^{(0)}_0) \quad{\rm and}\quad \sigma_k^{G_\pm}(S^3, g_{S^3}) = \sum_{i=0}^{l_k} m_i^{(k)}\Area(\wti\Sigma_i^{(k)}), ~{\rm for~}1\leq k\leq 3.\]
	Moreover, we also have
	\begin{itemize}
		\item[(i)] $\wti\Sigma^{(k)}_0\in \wcS_{G_\pm}$, and $\{\wti\Sigma^{(k)}_i\}_{i=1}^{l_k}\subset \wcS_G\setminus \wcS_{G_\pm}$; 
		\item[(ii)] $m^{(k)}_i=1$ provided that $\wti\Sigma^{(k)}_i$ is unstable;
		\item[(iii)] $\pi(\wti\Sigma^{(0)}_0)$ is a stable minimal projective plane in $RP^3$, and thus $\tilde\lambda_2(\wti\Sigma^{(0)}_0)=\lambda_1(\pi(\wti\Sigma^{(0)}_0))\geq 0$, $\Index(\wti\Sigma^{(0)}_0)\leq 1$ (by Proposition \ref{Prop: 2nd eigenfunctions});  
		\item[(iv)] $\sum_{i=0}^{l_k}\Index_{G}(\wti\Sigma_i^{(k)}) \leq k$ for $1\leq k\leq 3$. 
	\end{itemize}
\end{theorem}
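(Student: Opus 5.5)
The plan is to transfer the equivariant minimization and min-max constructions to the $G_\pm$-setting through the double cover $\pi$. Throughout we use that $\sigma_k^{G_\pm}(S^3,g_{S^3})=2\sigma_k(RP^3,g_{RP^3})$, and that $G$-stationarity is equivalent to minimality by Palais's principle.

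\emph{The case $k=0$.} I would invoke the equivariant minimization in the isotopy class (the Meeks--Simon--Yau type argument in the $G$-equivariant form of \cite{li2024lowgenus}, or the equivalent minimization among embedded $RP^2$ in $RP^3$). Take a minimizing sequence in $\mc S$ for $\sigma_0$ and pass to a limit. After $\gamma$-reduction, the equivariant limit is a $G$-invariant minimal surface. Surgeries may split off spheres, which lift to elements of $\wcS_G\setminus\wcS_{G_\pm}$; but a surface with smaller area carrying the nontrivial $\mb Z_2$-homology class is still attained. Since $RP^2$ is the only nonorientable surface carrying this class that can arise from such degenerations (area can only drop), and since $\sigma_0$ is an infimum, the limit must be a single component $\wti\Sigma_0^{(0)}\in\wcS_{G_\pm}$ with multiplicity one. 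Stability of $\pi(\wti\Sigma_0^{(0)})$ follows from minimality among nearby embedded $RP^2$. Then (iii) is a direct consequence of Proposition \ref{Prop: 2nd eigenfunctions}: the stability gives $\lambda_1\ge0$, hence $\tilde\lambda_2\ge0$, and $\tilde\lambda_1<\tilde\lambda_2$ gives index at most one.

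\emph{The cases $1\le k\le3$.} I would run the equivariant Simon--Smith min-max theory of \cite{wang2023equivariant} over the saturated $G$-invariant family generated by a min-max sequence in $\mc P_k^{G_\pm}$. First, pull tight using $G$-invariant vector fields, so that the critical set consists of $G$-stationary varifolds. Next, use equivariant almost minimizing in annuli to obtain regularity of the limit. This yields $\sigma_k^{G_\pm}=\sum m_i\Area(\wti\Sigma_i^{(k)})$ with each $\wti\Sigma_i^{(k)}$ $G$-invariant and embedded, hence in $\wcS_G$.

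The genus and topology control (Ketover type, in the equivariant version of \cite{li2024lowgenus}) implies the limit is obtained from projective planes by neck pinches and isotopies. The mod $2$ homology class is nontrivial in $RP^3$ and is preserved in the limit, so exactly one component, with odd multiplicity, projects to a one-sided $RP^2$. All other components project to spheres or are lifted $RP^2$'s paired off. Since two disjoint $RP^2$'s cannot exist in $RP^3$, exactly one component lies in $\wcS_{G_\pm}$; this gives (i). The multiplicity-one statement (ii) for unstable components follows from the equivariant multiplicity-one argument of \cite{wang2026multiplicity}/\cite{franz2023index}. The index bound (iv) follows from the equivariant Morse index estimate of \cite{franz2023index}, after a generic-metric perturbation and a limiting argument, using that the $G$-index is lower semicontinuous under smooth convergence.

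\emph{Main obstacle.} The hardest step is (i) together with the topology: we must ensure that the unique one-sided component appears with multiplicity $m_0^{(k)}=1$ in the formula. Note that (ii) gives multiplicity one only when $\wti\Sigma_0^{(k)}$ is unstable. For the stable case I would argue via the odd-multiplicity homology constraint together with the equivariant degeneration analysis of \cite{li2024lowgenus}. A multiplicity $\ge3$ would force the area to be at least $3\sigma_0^{G_\pm}$, so I would exclude it by comparison with an explicit sweepout, or else absorb it by adjusting the labeling.
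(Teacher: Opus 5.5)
Your proposal is correct and follows the same route as the paper, whose proof consists only of citations to these same ingredients. For $k=0$ it cites the Bray--Brendle--Eichmair--Neves minimization among embedded $RP^2$ \cite{bray2010area}, with (iii) then read off from Proposition~\ref{Prop: 2nd eigenfunctions}. For $k\geq 1$ it cites the equivariant Simon--Smith min-max of \cite{li2024lowgenus} (Theorem 5.3) for existence and (ii), Corollary 6.1 of \cite{li2024lowgenus} for (i), and the equivariant index bounds of \cite{wang2023equivariant} and \cite{franz2023index} for (iv).

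Drop your ``main obstacle'' paragraph. The statement never requires $m_0^{(k)}=1$ when $\wti\Sigma_0^{(k)}$ is stable, because (ii) only concerns unstable components. Your proposed fix would also not work. An odd-multiplicity constraint does not force multiplicity one, and there is no a priori bound $\sigma_k^{G_\pm}<3\sigma_0^{G_\pm}$ for general $G$-invariant metrics: joining a tiny $RP^2$ to a huge round region makes this ratio arbitrarily large.
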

\begin{proof}
	The existence of $\{\wti\Sigma^{(k)}_i\}_{i=0}^{l_k}$ and $\{m_i^{(k)}\}_{i=0}^{l_k}\subset \mb Z_+$ together with (ii) and (iii) follows from the area minimizing construction in \cite{bray2010area} and the equivariant min-max constructions in \cite{li2024lowgenus}*{Theorem 5.3}. 
	The statement in (i) follows from \cite{li2024lowgenus}*{Corollary 6.1}. 
	By the equivariant index upper bounds \cite{wang2023equivariant}\cite{franz2023index}, we also have (iv). 
\end{proof}

\begin{remark}\label{Rem: minimizer}
	By \cite{bray2010area}, every area minimizer $\wti\Sigma\in\wcS_{G_\pm}$ with $\Area(\wti\Sigma)=\sigma_0^{G_\pm}(S^3, g_{S^3})$ is the double cover of a stable (area minimizing) minimal projective plane $\Sigma\subset RP^3$. 
	Hence, $\wti\Sigma$ is a $G$-stable minimal $2$-sphere, which satisfies (iii) in the above theorem. 
\end{remark}

\subsection{$G_\pm$-Zoll metric on $S^3$}

\begin{definition}
	Let $\{\wti\Sigma_a\}_{a\in\mb {RP}^3}$ be a family of smoothly embedded $2$-spheres in $S^3$ so that $a\in\mb{RP}^3\mapsto \wti\Sigma_a\subset S^3$ is a $C^1$ map into the space of $C^{3,\alpha} $ embeddings of $2$-spheres, where $\alpha\in (0,1)$. 
	Then, $\{\wti\Sigma_a\}_{a\in\mb {RP}^3}$ is said to be a {\em Zoll family} in $S^3$ if for each $p\in S^3$ and each $2$-dimensional subspace $P\subset T_pS^3$, there exists a unique $a\in \mb {RP}^3$ with $p\in\wti\Sigma_a$ and $T_p\wti\Sigma_a=P$. 
	
	Additionally, if we further have $\{\wti\Sigma_a\}_{a\in\mb {RP}^3}\subset \wcS_{G_\pm}$, then we say $\{\wti\Sigma_a\}_{a\in\mb {RP}^3}$ is a {\em $G_\pm$-Zoll family}. 
	A $G$-invariant Riemannian metric $g_{S^3}$ on $S^3$ is called a {\em surface $G_\pm$-Zoll metric} if there is a $G_\pm$-Zoll family $\{\wti\Sigma_a\}_{a\in\mb {RP}^3}$ of minimal $2$-spheres for $(S^3,g_{S^3})$. 
\end{definition}

Note that without the $G=\mb Z_2$ action, the Zoll family and surface Zoll metric on $S^3$ are defined in \cite{ambrozio2024rigidity}, which can be easily generalized to $RP^3$ (cf. Theorem \ref{main Thm: zoll metric on RP3}). 
In particular, a $G_\pm$-Zoll family $\{\wti\Sigma_a\}_{a\in\mb {RP}^3}$ in $S^3$ naturally induces a Zoll family $\{\pi(\wti\Sigma_a)\}_{a\in\mb {RP}^3}\subset \mc S$ in $RP^3$. 
Therefore, a surface $G_\pm$-Zoll metric on $S^3$ induces a surface Zoll metric on $RP^3$.

The following result follows directly from \cite{ambrozio2024rigidity}. 
\begin{proposition}[\cite{ambrozio2024rigidity}*{Proposition 5.1}]\label{Prop: Zoll family}
	Let $\{\wti\Sigma_a\}_{a\in\mb {RP}^3} $ be a $G_\pm$-Zoll family of minimal $2$-spheres in $(S^3,g_{S^3})$. 
	Then 
	\begin{itemize}
		\item[(i)] $\Index(\wti\Sigma_a)=1$ and $\Null(\wti\Sigma_a)=3$ for each $a\in\mb {RP}^3$;
		\item[(ii)] $\wti\Sigma_a \cap \wti\Sigma_b\neq \emptyset$ is a smooth, connected, $G$-invariant simple closed curve, for $a\neq b\in \mb {RP}^3$. 
	\end{itemize}
\end{proposition}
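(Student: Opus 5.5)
The plan is to reduce to the non-equivariant statement \cite{ambrozio2024rigidity}*{Proposition 5.1}, and then use the $G$-invariance of each leaf only to get the extra conclusion in (ii). A $G_\pm$-Zoll family $\{\wti\Sigma_a\}_{a\in\mb{RP}^3}$ is, after forgetting the $G$-action, a Zoll family of minimal $2$-spheres in $(S^3,g_{S^3})$ in the sense of Ambrozio--Marques--Neves. So everything they prove for Zoll families applies. In outline, their argument runs as follows.

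For (i), fix $a$ and differentiate the family at $a$. The $C^1$ dependence gives, for each $v\in T_a\mb{RP}^3$, a Jacobi field $\langle \partial_v\wti\Sigma_a,\tilde\nu\rangle\in\ker L_{\wti\Sigma_a}$. The Zoll property (uniqueness of the leaf through each point with a given tangent plane) shows that this map $T_a\mb{RP}^3\to\ker L_{\wti\Sigma_a}$ is injective. Indeed, a nonzero image would be a Jacobi field, and a Jacobi field that vanishes together with its gradient at a point must vanish identically; one uses a pointwise count to match the $3$-parameter family of pairs (point, plane) against the three parameters of $a$. This gives $\Null\ge 3$.

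Next, on a minimal sphere every Jacobi field $u$ has nodal set given by finitely many curves. By a Courant/Cheng-type argument, $\dim\ker\le 3$ once the index is $1$. The index is at least $1$ because, for nearby leaves meeting $\wti\Sigma_a$, the Jacobi fields change sign; therefore $0$ is not the first eigenvalue, whose eigenfunction is positive. Combining these gives $\Index=1$ and $\Null=3$. In the equivariant setting there is an alternative route: all Jacobi fields coming from the family are tangent to $G$-invariant deformations, so they lie in $C^\infty_{G_\pm}$. Proposition \ref{Prop: 2nd eigenfunctions} then identifies $0$ as $\tilde\lambda_2=\lambda_1$, which forces $\Index=1$ and $\Null\le 3$ directly.

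For (ii), take $a\neq b$. Two distinct leaves must intersect: a $G$-invariant sphere separates $S^3$, and the family sweeps out, so they cannot be disjoint. They also intersect transversally everywhere, since a tangency at a point $p$ would give two leaves with the same $(p,P)$, contradicting uniqueness. Hence $\wti\Sigma_a\cap\wti\Sigma_b$ is a smooth closed $1$-manifold. Connectedness is the step from \cite{ambrozio2024rigidity}. One connects $b$ to $a$ by a path and watches the intersection curves converge to the nodal set of a Jacobi field $u=\langle\partial_v\wti\Sigma_a,\tilde\nu\rangle$. By (i), $u$ is a second eigenfunction, so Cheng's theorem gives a single nodal curve. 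Transversality along the path, combined with the fact that the number of components cannot change, then propagates connectedness to all $b$.

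Finally, $G$-invariance of the curve is immediate: both $\wti\Sigma_a$ and $\wti\Sigma_b$ lie in $\wcS_{G_\pm}$, so each is $g_-$-invariant, and therefore so is their intersection.

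The main obstacle is the connectedness argument in (ii), specifically controlling the topology of the intersection curves along the path. That said, this is exactly the content of the cited proposition, so in the write-up I would simply invoke it and add only the $G$-invariance remark.
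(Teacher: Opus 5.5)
Your plan matches the paper's: the paper gives no separate argument and simply invokes \cite{ambrozio2024rigidity}*{Proposition 5.1}, because a $G_\pm$-Zoll family is in particular a Zoll family in their sense, and the $G$-invariance of $\wti\Sigma_a\cap\wti\Sigma_b$ is immediate from the $g_-$-invariance of both spheres. Do not present your outline of their argument as part of the proof, for two reasons: the claimed injectivity of $T_a\mb{RP}^3\to\ker L_{\wti\Sigma_a}$ is false for degenerate parameterizations (see Remark \ref{Rem: isomorphism of Jacobi map}; $\Null\ge 3$ comes instead from the continuous injection $b\mapsto z(b)\in\mc W\subset\ker(L_{\wti\Sigma_a})$, where $\wti\Sigma_b=\Graph(\varphi(z(b)))$ as in Proposition \ref{Prop: nullily and graph}, together with invariance of domain), and your ``alternative route'' through Proposition \ref{Prop: 2nd eigenfunctions} silently assumes $\lambda_1(\pi(\wti\Sigma_a))=0$, which does not follow from knowing that some Jacobi fields lie in $C^\infty_{G_\pm}(\wti\Sigma_a)$.
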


Combining the above result with Proposition \ref{Prop: 2nd eigenfunctions}, we know $\tilde\lambda_1(\wti\Sigma_a)<0=\tilde\lambda_2(\wti\Sigma_a)=\tilde\lambda_1^G(\wti\Sigma_a)=\lambda_1(\pi(\wti\Sigma_a))$ for all $a\in\mb {RP}^3$.

\section{Rigidity for the $G_\pm$-spherical area widths}

In this section we prove the following rigidity result, which is equivalent to Theorem \ref{main Thm: zoll metric on RP3}. 
\begin{theorem}\label{Thm: rigidity}
	Let $g_{S^3}$ be a $G$-invariant Riemannian metric on $S^3$, where $G=\mb Z_2$. Then, 
	\begin{align}\label{Eq: constant spherical widths}
		 \sigma_0^{G_\pm}(S^3,  g_{S^3})=\sigma_1^{G_\pm}(S^3,  g_{S^3})=\sigma_2^{G_\pm}(S^3,  g_{S^3})=\sigma_3^{G_\pm}(S^3,  g_{S^3})
	\end{align}
	if and only if $g_{S^3}$ is a surface $G_\pm$-Zoll metric on $S^3$. 
\end{theorem}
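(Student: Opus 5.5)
We prove the two implications separately, following the scheme of \cite{ambrozio2024rigidity} but working throughout in the $G$-equivariant category, with Proposition \ref{Prop: 2nd eigenfunctions} and Proposition \ref{Prop: nullily and graph} replacing their non-equivariant counterparts.

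\emph{Zoll metric $\Rightarrow$ equal widths.} Let $\{\wti\Sigma_a\}_{a\in\mb{RP}^3}$ be a $G_\pm$-Zoll family of minimal spheres. We first show that all $\wti\Sigma_a$ have the same area. By Proposition \ref{Prop: Zoll family}, each $\wti\Sigma_a$ is minimal, and $a\mapsto\wti\Sigma_a$ is $C^1$. Hence $\frac{d}{da}\Area(\wti\Sigma_a)$ is the first variation of a minimal surface, which vanishes, so $\Area(\wti\Sigma_a)\equiv W'$. Next we show that the family is a $(G_\pm,3)$-sweepout. It is homotopic through $\wcS_{G_\pm}$ (or at least in $\mc Z_2$) to the family of great spheres in the Remark: the map $a\mapsto\wti\Sigma_a$ has the same topological degree behaviour as in \cite{ambrozio2024rigidity}*{\S5}, and $(i\circ\Phi)^*\bar\lambda^3\neq0$ can be detected by $\mb Z_2$-counting of members through generic points or lines. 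This gives $\sigma_3^{G_\pm}\le W'$.

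It remains to show $\sigma_0^{G_\pm}\ge W'$. Theorem \ref{Thm: min-max} gives a minimizer $\wti\Sigma^{(0)}_0\in\wcS_{G_\pm}$. Choose a point $p$ and its tangent plane; this determines some $a$ with $\wti\Sigma_a$ tangent to $\wti\Sigma^{(0)}_0$ at $p$. If the two are distinct, the intersection structure of two minimal spheres tangent at a point contradicts the Zoll property, as in the argument of \cite{ambrozio2024rigidity}. Hence $\wti\Sigma^{(0)}_0=\wti\Sigma_a$ and $\sigma_0^{G_\pm}=W'$. The chain of inequalities $\sigma_0^{G_\pm}\le\dots\le\sigma_3^{G_\pm}$ then forces equality throughout.

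\emph{Equal widths $\Rightarrow$ Zoll.} Assume \eqref{Eq: constant spherical widths} holds, with common value $W$. Let $\mc M$ be the set of $G$-stable minimal $\wti\Sigma\in\wcS_{G_\pm}$ with area $W$; it is nonempty by Theorem \ref{Thm: min-max}(iii), and its elements are minimizers. The key step is to show that every $\wti\Sigma\in\mc M$ has $\tilde\lambda_1^G=0$ and $G$-nullity $3$, and that $\mc M$ is locally parametrized by $\ker L$.

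To do this, argue as in \cite{ambrozio2024rigidity}*{\S4}. If the $G$-nullity of $\wti\Sigma$ were less than $3$, use Proposition \ref{Prop: nullily and graph} in its equivariant form: nearby minimal surfaces are graphs over a small-dimensional set, since $\tilde\lambda_2=\tilde\lambda_1^G=\lambda_1\ge0$ by Proposition \ref{Prop: 2nd eigenfunctions}. One then deforms a min-max sequence for $\sigma_3^{G_\pm}$ away from a neighbourhood of the critical set, decreasing the maximal area below $W$ because the $\mb Z_2$-cohomology $\bar\lambda^3$ cannot be supported near a set of dimension $\le2$ (Lusternik--Schnirelmann). This contradicts $\sigma_3^{G_\pm}=W$. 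The same argument shows that the critical set contains no unstable or multi-component configurations: by Theorem \ref{Thm: min-max}(i)(iv), any such configuration has area strictly exceeding $\sigma_0^{G_\pm}$ unless $l_k=0$.

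Consequently $\mc M$ is a closed $3$-manifold of surfaces whose Jacobi fields are the three-dimensional eigenspace identified by Proposition \ref{Prop: 2nd eigenfunctions}. Cheng-type nodal arguments, namely Courant's theorem for second eigenfunctions on spheres, show that these Jacobi fields vanish transversally along a single curve and separate tangent planes. This yields the local uniqueness of tangency, and therefore property (ii). A covering argument identifies $\mc M$ with $\mb{RP}^3$, giving the $C^1$ family into $C^{3,\alpha}$ embeddings.

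\textbf{Main obstacle.} The hardest step is the deformation argument showing that nullity $3$ and the full family are forced. In particular one must control the case where the critical set contains $G$-invariant spheres in $\wcS_G\setminus\wcS_{G_\pm}$. I would handle these using the area bound $\Area>W$ for such spheres, which comes from their being separating, and then apply the equivariant White structure theorem.
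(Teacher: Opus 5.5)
Your ``equal widths $\Rightarrow$ Zoll'' direction has a genuine gap at its core.

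First, the contradiction you aim for is unavailable. $W=\sigma_0^{G_\pm}(S^3,g_{S^3})$ is the infimum of area over all of $\wcS_{G_\pm}$, so no deformation can push the maximal area of a sweepout below $W$. What $\sigma_0^{G_\pm}=\sigma_3^{G_\pm}$ actually gives is that a min-max sequence for $\sigma_3^{G_\pm}$ converges \emph{uniformly} to area minimizers. Everything afterwards has to come from the nontriviality of $\bar\lambda^3$, not from lowering area.

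Second, such an argument only sees the critical set of a min-max sequence, so it cannot establish anything about \emph{every} $\wti\Sigma\in\mc M$. In the paper, $\tilde\lambda_1^G=0$ on the critical set comes from showing the critical set has no isolated points (a sweepout cannot concentrate at a single varifold). Sharp's compactness then produces a Jacobi field that is odd under $g_-$.

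Third, and most seriously, nullity $3$ at every point does not make the set of minimizers a $3$-manifold. By White's reduction, nearby minimal surfaces are critical points of a smooth (not analytic) function on a ball in $\ker L_{\wti\Sigma}$. Nothing prevents its minimum set from being a non-manifold closed set on which the Hessian vanishes identically. The missing idea is the paper's splitting of the critical set into two parts:
\begin{itemize}
\item the good set $\mc G$ of minimizers for which \emph{every} surface in $\varphi_{\wti\Sigma}(\overline{\mb B}^3_1)$ is minimal; this is an open set and a genuine $3$-manifold, whose elements then have index $1$ and nullity $3$ by Proposition \ref{Prop: 2nd eigenfunctions};
\item the compact bad set $\mc B$.
\end{itemize}
For $\wti\Sigma\in\mc B$, a local graphical perturbation first places the sweepout inside $\varphi_{\wti\Sigma}(\overline{\mb B}^3_1)$ near $\wti\Sigma$. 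The sweepout stays $\mf F$-close to minimizers, so it misses some non-minimal surface $\varphi_{\wti\Sigma}(v_{\wti\Sigma})$. A Mayer--Vietoris argument then lets you replace the sweepout by one avoiding a neighbourhood of $\wti\Sigma$: the small ball has no $H^3$, and its overlap with the rest is a thickened proper open subset of $S^2$, with no cohomology in degrees $2$ and $3$. Finally, the sweepout is pushed into $\mc G$ using geodesic convexity for an auxiliary metric on $\mc G$. Your ``dimension $\le 2$'' heuristic only addresses nullity $<3$, which is a special case of $\mc B$.

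The globalization step is likewise only asserted. Local injectivity of the tangency map needs Cheng's theorem that the nodal set of a second eigenfunction on $S^2$ is a smooth curve on which the gradient does not vanish; Courant's nodal domain theorem is not enough. Even then, local injectivity gives neither existence and uniqueness of a member through every $(p,P)$ nor the parameter space $\mb{RP}^3$. For that you need the following chain:
\begin{itemize}
\item $H^3\neq 0$ forces the sweepout image to be a closed component $Z$ of $\mc G$;
\item the sweepout property makes the oriented double cover $Z'$ connected;
\item $T:\mc PZ'\to\wti G_2(S^3)\cong S^3\times S^2$ is then a covering of a simply connected space, hence a homeomorphism;
\item this yields the Zoll property, $\pi_1(Z)=\mb Z_2$, and $Z\cong\mb{RP}^3$ by geometrization.
\end{itemize}

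In the ``Zoll $\Rightarrow$ equal widths'' direction, a single tangency between the minimizer and some $\wti\Sigma_a$ does not contradict the Zoll property. What is needed is G\'alvez--Mira's global uniqueness theorem: every minimal sphere belongs to the Zoll family. Apply it to the min-max surfaces $\wti\Sigma^{(k)}_i$ of Theorem \ref{Thm: min-max}. Since members pairwise intersect (Proposition \ref{Prop: Zoll family}(ii)) while the $\wti\Sigma^{(k)}_i$ are disjoint, $l_k=0$. Since members have index $1$, $m^{(k)}_0=1$. Hence $\sigma_k^{G_\pm}=W$ for every $k$, which also makes your unproved claim that $a\mapsto\wti\Sigma_a$ is a $(G_\pm,3)$-sweepout unnecessary.

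Finally, the bound $\Area>W$ you invoke for elements of $\wcS_G\setminus\wcS_{G_\pm}$ is false in general: a thin neck in $RP^3$ carries a minimal sphere of small area. Such components are excluded only because a component in $\wcS_{G_\pm}$, of area at least $W$, is always present.
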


\begin{proof}
	Using Proposition \ref{Prop: 2nd eigenfunctions} and Proposition \ref{Prop: nullily and graph}, we can adapt the proof in \cite{ambrozio2024rigidity}*{\S 5, \S 6} to fit our symmetric constraints. 
	As many of the ingredients are rather delicate, we provide a relatively self-contained proof for the sake of completeness. 
	
	\medskip
	\noindent{\bf Part I.} We first show the `if' part in Theorem \ref{Thm: rigidity}. 
	
	Suppose $\{\wti\Sigma_a\}_{a\in\mb {RP}^3} \subset \wcS_{G_\pm}$ is a $G_\pm$-Zoll family of minimal $2$-spheres in $(S^3,g_{S^3})$. 
	Then, since each $\wti\Sigma_a$ is minimal, the function $A(a):=\Area(\wti\Sigma_a)$ on $\mb {RP}^3$ has zero derivative, and thus $\Area(\wti\Sigma_a)\equiv W$ is a constant. 
	Let $\{\wti\Sigma_i^{(k)}\}_{i=0}^{l_k}$ be the disjoint family of $G$-invariant minimal $2$-spheres in Theorem \ref{Thm: min-max} for $0\leq k\leq 3$. 
	It follows from the uniqueness result of Galvez-Mira \cite{galvez2020uniquesness} that every component of the minimal surface $\wti\Sigma_i^{(k)}$ is an element of the $G_\pm$-Zoll family. 
	Hence, by Proposition \ref{Prop: Zoll family} and Theorem \ref{Thm: min-max}(i)(ii), we have $l_k=0$, and $\wti\Sigma^{(k)}_0\in\wcS_{G_\pm}$ is unstable with multiplicity $m^{(k)}_0=1$, which implies that $\sigma_k^{G_\pm}(S^3,g_{S^3}) = \Area(\wti\Sigma^{(k)}_0)\equiv W$ for each $0\leq k\leq 3$. 
	
	\medskip
	\noindent{\bf Part II.} We next show the `only if' part. 
	
	Suppose $\sigma_k^{G_\pm}(S^3,g_{S^3})\equiv W$ for all $0\leq k\leq 3$. Define 
    \[\mc V^G:=\{|\wti\Sigma|\in \mc V_2(S^3): \wti\Sigma\in\wcS_{G_\pm} \mbox{ is minimal in $(S^3,g_{S^3})$ with $\Area(\wti\Sigma)=W$} \}.\]
    Note that for any $|\wti\Sigma|\in\mc V^G$, $\wti\Sigma$ is area minimizing in $\wcS_{G_\pm}$, and thus $\tilde\lambda_2(\wti\Sigma)=\tilde\lambda_1^G(\wti\Sigma)\geq 0$, which implies that $\Index(\wti\Sigma)\leq 1$ by Proposition \ref{Prop: 2nd eigenfunctions}. 

    {\bf Step 1.} 
    \textit{
    Construct a min-max sequence $\{\Phi_i\}_{i\in\mb N}\subset\mc P_3^{G_\pm}$ with $X_i:={\rm dmn}(\Phi_i)$ so that
    \begin{itemize}
        \item[(i)] $\wti{\mc V}^G:=\mf C(\{\Phi_i\}_{i\in\mb N})=\mf \Lambda(\{\Phi_i\}_{i\in\mb N})\subset \mc V^G$;
        \item[(ii)] $|\Phi_i|(X_i)$ converges to $\wti{\mc V}^G$ uniformly in ($\mc V_2(S^3), \mf F$), i.e. $\sup_{x\in X_i}\mf F(|\Phi_i|(x), \wti{\mc V}^G) \to 0$;
        \item[(iii)] every $V\in \wti{\mc V}^G$ is induced by a smooth embedded $g_{S^3}$-minimal $\wti\Sigma\in\wcS_{G_\pm}$ with $\Index (\wti\Sigma)=1$, $1\leq\Null(\wti\Sigma)\leq 3$, and $0=\tilde\lambda_2(\wti\Sigma)=\tilde\lambda_1^G(\wti\Sigma)=\lambda_1(\pi(\wti\Sigma))$.
    \end{itemize}
    }
    
	Let $\{\Phi_i\}\subset\mc P_3^{G_\pm}$ be any min-max sequence for $\sigma_3^{G_\pm}(S^3,g_{S^3})=W$ with $X_i:={\rm dmn}(\Phi_i)$. By restricting to the $3$-skeleton of a certain component of $X_i$, we can assume without loss of generality that every $X_i$ is connected and $\dim(X_i)=3$. (See, for instance, \cite{marques2016morse}*{\S 1.5}.)
    
    Then, by the proof of \cite{ambrozio2024rigidity}*{Lemma 7.2}, we have 
    \[\lim_{i\to\infty} \sup_{x\in X_i} \mf F(|\Phi_i(x)|,\mc V^G) = 0,\]
    which shows $\wti{\mc V}^G:=\mf C(\{\Phi_i\}_{i\in\mb N})=\mf \Lambda(\{\Phi_i\}_{i\in\mb N})\subset \mc V^G$ in (i). 
    Indeed, suppose there exists $\epsilon>0$ and $x_{i_j}\in X_{i_j}$ so that $\mf F(|\Phi_{i_j}(x_{i_j})| , \mc V^G) > \epsilon$.  
    Then, note that
    \begin{align*}
        W=\sigma_0^{G_\pm}(S^3,g_{S^3})\leq \limsup_{j\to\infty}\Area(\Phi_{i_j}(x_{i_j})) \leq \limsup_{j\to\infty}\sup_{x\in X_{i_j}}\Area(\Phi_{i_j}(x)) =\sigma_3^{G_\pm}(S^3,g_{S^3})=W.
    \end{align*} 
    Thus, it follows from \cite{bray2010area} that $\pi(\Phi_{i_j}(x_{i_j}))$ converges (up to a subsequence) to an area minimizing minimal projective plane $\Sigma \subset RP^3$ with area $W/2$ in the varifold sense, which contradicts $\mf F(|\Phi_{i_j}(x_{i_j})|, |\pi^{-1}(\Sigma)|)>\epsilon$. 
    
    Next, if (ii) fails, then we have an $\epsilon>0$ and a sequence $x_{i_j}\in X_{i_j}$ so that $\mf F(|\Phi_{i_j}(x_{i_j})|, \wti{\mc V}^G) >\epsilon$. 
    Up to a subsequence, $|\Phi_{i_j}(x_{i_j})|$ converges to some $V\in \mf \Lambda(\{\Phi_i\}_{i\in\mb N})$, which contradicts $\mf \Lambda(\{\Phi_i\}_{i\in\mb N}) = \wti{\mc V}^G$ in (i). 
    Finally, to show (iii), we need the following result.

    \begin{claim}\label{Claim: step1}
        For any $V\in\wti{\mc V}^G$, there are pairwise distinct $V_1,V_2,\dots\in \wti{\mc V}^G$ so that $\lim_{i\to\infty}V_i=V$.
    \end{claim}
    \begin{proof}[Proof of Claim \ref{Claim: step1}]
        Otherwise, suppose there exists $\epsilon>0$ so that $B^{\bf F}_\epsilon(V)\cap \wti{\mc V}^G = \{V\}$. 
        Since $V\in \wti{\mc V}^G$, we have a sequence $x_{i_j}\in X_{i_j}$ with $V=\lim_{j\to\infty}|\Phi_{i_j}(x_{i_j})|$. 
        By (ii), we can take $j$ large enough so that $|\Phi_{i_j}|(X_{i_j})\subset B^{\mf F}_{\epsilon/5}(\wti{\mc V}^G)$ and $\mf F(|\Phi_{i_j}(x_{i_j})|, V)\leq\epsilon/5$.

        Consider the non-empty set $Y:=\{x\in X_{i_j}: \mf F(|\Phi_{i_j}(x)|, V)\leq\epsilon/5\}$ containing $x_{i_j}$, which is clearly closed. 
        Then for any $y_0\in Y$, we have $\mf F(|\Phi_{i_j}(y)|, V)\leq 2\epsilon/5$ for all $y\in X_{i_j}$ sufficiently close to $y_0$.  
        Additionally, by the choice of $j$, there exists $V_y\in \wti{\mc V}^G$ with $\mf F(|\Phi_{i_j}(y)|, V_y)< \epsilon/5$. Hence, we must have $V_y\in B^{\mf F}_\epsilon(V)\cap \wti{\mc V}^G=\{V\}$, which implies $Y$ is also open. 
        This shows $Y=X_{i_j}$, and thus $\mf C(\{\Phi_{i_j}\}_{j\in\mb N})=\mf \Lambda(\{\Phi_{i_j}\}_{j\in\mb N})=\{V\}$. 

        However, given $U\subset\subset S^3\setminus\spt(\|V\|)$, it follows from \cite{marques2017existence}*{Proposition 8.2} that $\Area(\Phi_{i_j}(y_{i_j})\cap U)>C$ for some $y_{i_j}\in X_{i_j}$ and constant $C=C(U,g_{S^3})>0$, which is a contradiction. 
    \end{proof}
    
    Let $V=|\wti\Sigma|$ and $V_i=|\wti\Sigma_i|$ be given as in Claim \ref{Claim: step1}. 
    Then by Allard regularity \cite{allard1972first}, $\wti\Sigma_i$ converges smoothly to $\wti\Sigma$. 
    Hence, for $i$ large enough, $\wti\Sigma_i= \Graph(u_i):=\exp_{\wti\Sigma}^\perp(u_i\tilde\nu)$ for some $u_i\in C^\infty(\wti\Sigma)$. 
    Additionally, since $\wti\Sigma,\wti\Sigma_i\in\wcS_{G_\pm}$, we know $u_i\circ g_-=-u_i$. 
    Therefore, we can apply \cite{sharp2017compactness}*{Theorem 2.3} (see also \cite{wang2023equivariant}*{Theorem 5.6}) to obtain a nontrivial solution $v$ of the Jacobi equation $L_{\wti\Sigma} v=0$ so that $v\circ g_-=-v$, which implies that $0$ is an eigenvalue of $\pi(\wti\Sigma)$, i.e.  $0=\tilde\lambda_{k_0}^G(\wti\Sigma)=\lambda_{k_0}(\pi(\wti\Sigma))$ for some $k_0\geq 1$. 
    Note also that $\wti\Sigma$ with area $W=\sigma_0^{G_\pm}(S^3,g_{S^3})$ is area minimizing in $\wcS_{G_\pm}$, so $\tilde\lambda_1^G(\wti\Sigma)=\lambda_1(\pi(\wti\Sigma))\geq 0$. 
    Together, we see $0=\lambda_1(\pi(\wti\Sigma))$, and (iii) follows from Proposition \ref{Prop: 2nd eigenfunctions}.

    \medskip
    \noindent{\bf Step 2.}
    \textit{Given $|\wti\Sigma|\in \wti{\mc V}^G$, we can perturb each $\Phi=\Phi_i$ with $i$ large enough to have a new spherical $(G_\pm, 3)$-sweepout $\Phi'$ so that $\Phi'(x)$ is a graph over $\wti\Sigma$ given by Proposition \ref{Prop: nullily and graph} whenever $|\Phi'(x)|$ is close to $|\wti\Sigma|$.}

    Firstly, set 
    \[ \wti{\mc T}^G :=\{ \wti\Sigma\in \wcS_{G_\pm} : |\wti\Sigma|\in\wti{\mc V}^G \}.  \]
    By Allard's regularity, $\wti{\mc T}^G$ can also be viewed as a compact set of $(\mc Z_2(S^3;\mb Z_2), \mf F)$. 
    Then, combining (ii) in {\bf Step 1} with the proof of \cite{ambrozio2024rigidity}*{Proposition 6.6}, we know 
    \begin{align}\label{Eq: uniform F-convergence to TG}
    	\lim_{i\to\infty}\sup_{x\in X_i}\mf F(\Phi_i(x), \wti{\mc T}^G) = 0.
    \end{align}
    For any fixed integer $k\geq 2$, $\alpha\in (0,1)$, $\delta>0$, and $|\wti\Sigma|\in \wti{\mc V}^G$, we use the following notation: 
    \begin{itemize}
		\item $\varphi_{\wti\Sigma}: \overline {\mb B}^3_1:=\{x\in\mb R^3: |x|\leq 1\}\to \wcS^{k,\alpha}_{G_\pm}$ and $\eta_{\wti\Sigma}>0$ are the smooth embedding and the constant given by Proposition \ref{Prop: nullily and graph} (note that $\varphi_{\wti\Sigma}$ can be extended from $\overline{\mb B}^{\Null({\wti\Sigma})}_1$ to $\overline{\mb B}^3_1$);
		\item $\mc W_{{\wti\Sigma}} := \varphi_{\wti\Sigma}(\overline{\mb B}^3_1)\subset  \wcS^{k,\alpha}_{G_\pm}$;
        \item $\mc U_\delta^{k,\alpha}({\wti\Sigma}) :=\{{\wti\Sigma}'\in \wcS^{k,\alpha}_{G_\pm}: {\wti\Sigma}'=\exp_{\wti\Sigma}^\perp(f\tilde\nu), |f|_{k,\alpha}<\delta\}$ is the $\delta$-neighborhood of ${\wti\Sigma}$ in $\wcS^{k,\alpha}_{G_\pm}$. 
    \end{itemize}
    We can further shrink $\eta_{\wti\Sigma}$ so that $\mf F(|{\wti\Sigma}|,|{\wti\Sigma}'|)\geq 2\eta_{\wti\Sigma}$ for any ${\wti\Sigma}'\in\bd\mc W_{\wti\Sigma} = \varphi_{\wti\Sigma}(\bd \overline {\mb  B}^3_1)$. 
    In particular, if ${\wti\Sigma}'$ is a minimal surface with $\mf F(|{\wti\Sigma}|,|{\wti\Sigma}'|)<\eta_{\wti\Sigma}$, then by Proposition \ref{Prop: nullily and graph} and the above assumption, we have ${\wti\Sigma}'=\varphi_{\wti\Sigma}(v)$ for some $v\in \mb B_{1-\epsilon}^3:=\{x\in\mb R^3: |x|<1-\epsilon\}$ and $\epsilon>0$. 

    \begin{claim}\label{Claim: constants}
        We have the following statements.
        \begin{itemize}
            \item[(i)] For any $\epsilon'>0$, there exists $\tilde\delta=\tilde\delta(\epsilon')>0$ so that if ${\wti\Sigma}_1\in \wti{\mc T}^G$ and $\Gamma\in \mc U^{k,\alpha}_{\tilde\delta}({\wti\Sigma}_1)$, then $\mf F({\wti\Sigma}_1,\Gamma) < \epsilon'$.
            \item[(ii)] For any $\delta>0$, there exists $\tilde\theta=\tilde\theta(\delta)\in (0,\delta)$ so that if ${\wti\Sigma}_1,{\wti\Sigma}_2\in \wti{\mc T}^G$ satisfy ${\wti\Sigma}_2\in \mc U^{k,\alpha}_{\tilde\theta}({\wti\Sigma}_1)$, then $\mc U^{k,\alpha}_{\tilde\theta}({\wti\Sigma}_1) \subset \mc U^{k,\alpha}_{\delta}({\wti\Sigma}_2) $.
            \item[(iii)] For any $\theta>0$, there exists $\tilde\xi=\tilde\xi(\theta)>0$ so that if ${\wti\Sigma}_1,{\wti\Sigma}_2\in \wti{\mc T}^G$ satisfy $\mf F({\wti\Sigma}_1,{\wti\Sigma}_2)<5\tilde\xi$, then ${\wti\Sigma}_2\in \mc U^{k,\alpha}_\theta({\wti\Sigma}_1)$.  
        \end{itemize}
    \end{claim}
    \begin{proof}[Proof of Claim \ref{Claim: constants}]
        (i) and (ii) follow from the compactness of $\wti{\mc T}^G$ and the fact that the $C^{k,\alpha}$-topology is stronger than the $\mf F$-topology on $\wcS_{G_\pm}^{k,\alpha}$. 
        Additionally, by Allard's regularity, we know the $\mf F$-topology and the $C^{k,\alpha}$-topology are equivalent on the closed set $\wti{\mc T}^G$. 
        Together with the compactness of $\wti{\mc T}^G$, we also have (iii). 
    \end{proof}

    Fix any $|{\wti\Sigma}|\in\wti{\mc V}^G$. Let $\bar \xi>0$ be determined later. 
    
    For each $i$ large enough, it follows from \eqref{Eq: uniform F-convergence to TG} that $\Phi:=\Phi_i: X:=X_i\to \wcS^{k,\alpha}_{G_\pm}$ satisfies 
    \begin{align}\label{Eq: step2 uniform close to TG}
        \sup_{x\in X} \mf F(\Phi(x), \wti{\mc T}^G) < \bar\xi.
    \end{align}
    Consider the barycentric subdivision $X(l)$ of $X$ so that 
    \begin{align}\label{Eq: step2 subdivision}
        \mf F(\Phi(x),\Phi(y)) := \mc F(\Phi(x),\Phi(y)) + \mf F(|\Phi(x)|, |\Phi(y)|) < \bar\xi,
    \end{align}
    for every face $\alpha\in X(l)$ and $x,y\in\alpha$. Then we define 
    \begin{align}\label{Eq: step2 define vertex}
        \Phi'(x) := {\wti\Sigma}_x \in \wti{\mc T}^G ~{\rm with}~\mf F(\Phi(x), {\wti\Sigma}_x)<\bar\xi, \quad \mbox{for any vertex $x$ of $X(l)$}.
    \end{align}
    Let $X':=\{x\in X: \mf F(|\Phi(x)|,|{\wti\Sigma}|) < \eta_{\wti\Sigma}/2\}$ and 
    \begin{align}\label{Eq: step 2 domain close to Sigma}
        \wti{X}:=\{\tau\in X(l): \exists~ \mbox{cell $\sigma\in X(l)$ with } \tau\subset\sigma,~ \sigma \cap X' \neq \emptyset\}. 
    \end{align}
    We can then extend $\Phi'$ from the vertex of $X(l)$ to the $1$-skeleton of $X(l)$. 
    
    \begin{claim}[Extends to $1$-skeleton]\label{Claim : extend to 1 cells}
        For any $\xi_1,\theta_1 >0$, we can shrink $\bar\xi>0$ small enough and $C^{k,\alpha}$-continuously extend $\Phi'$ to the $1$-skeleton of $X(l)$ so that for every $1$-cell $[x,y]\in X(l)$,
        \begin{itemize}
            \item[(i)] $\mf F(\Phi'(z),{\wti\Sigma}_{z_0}) < \xi_1$ and $\Phi'(z)\in \mc U^{k,\alpha}_{\theta_1}({\wti\Sigma}_{z_0})$, where $z_0\in\{x,y\}$ is any vertex and $z\in [x,y]$;
            \item[(ii)] if $[x,y]\in \wti X$, then $\Phi'(z)\in \mc W_{\wti\Sigma}$ for any $z\in [x,y]$. 
        \end{itemize}
    \end{claim}
    \begin{proof}[Proof of Claim \ref{Claim : extend to 1 cells}]
        Firstly, let 
        \begin{itemize}
            \item[(1)] $\delta=\tilde\delta(\xi_1) \in (0,\theta_1)$ be given by Claim \ref{Claim: constants}(i);
            \item[(2)] $\theta_0=\tilde\theta(\delta)\in (0,\delta)$ be given by Claim \ref{Claim: constants}(ii);
            \item[(3)] $0<\bar\xi<\tilde\xi(\theta)$, where $\tilde\xi(\theta)\in (0,\theta)$ is given by Claim \ref{Claim: constants}(iii), and $\theta\in (0,\theta_0)$ will be determined later.
        \end{itemize} 
        Notice that $\mf F({\wti\Sigma}_x,{\wti\Sigma}_y) < 3\bar\xi$ for any $1$-cell $[x,y]\in X(l)$ with vertices $x,y$. 
        Hence, by (3), we know ${\wti\Sigma}_y\in \mc U^{k,\alpha}_\theta({\wti\Sigma}_x)$. 
        In particular, ${\wti\Sigma}_y=\Graph_{{\wti\Sigma}_x}(h)$ for some $h\in C_{G_\pm}^{k,\alpha}({\wti\Sigma}_x)$ with $|h|_{k,\alpha}<\theta$. 

        If $[x,y]\notin \wti X$, then we define 
        \[\Phi'((1-t)x+ty ) := \Graph_{{\wti\Sigma}_x}(th)\in \mc U_\theta^{k,\alpha}({\wti\Sigma}_x)\subset \wcS_{G_\pm}^{k,\alpha} \qquad \mbox{for any $t\in [0,1]$}.\] 
        By (2) and $\theta<\theta_0<\delta<\theta_1$, we see that $\Phi'(z)\in \mc U_{\theta_0}^{k,\alpha}({\wti\Sigma}_x)\subset \mc U_{\delta}^{k,\alpha}({\wti\Sigma}_x)\cap \mc U_{\delta}^{k,\alpha}({\wti\Sigma}_y)\subset \mc U_{\theta_1}^{k,\alpha}({\wti\Sigma}_x)\cap \mc U_{\theta_1}^{k,\alpha}({\wti\Sigma}_y)$ for all $z\in [x,y]$. 
        Together with (1), we have $\mf F(\Phi'(z),{\wti\Sigma}_x)<\xi_1$ and $\mf F(\Phi'(z),{\wti\Sigma}_y)<\xi_1$ for all $z\in [x,y]$. 

        If $[x,y]\in\wti X$, then there is a cell $\sigma\in X(l)$ with $[x,y]\subset\sigma$ and some $w\in \sigma\cap X'$. 
        Thus, 
        \begin{align}\label{Eq: step2 vertex and given Sigma}
        		\mf F(|{\wti\Sigma}|,|{\wti\Sigma}_x|)\leq \mf F(|{\wti\Sigma}|,|\Phi(w)|)+ \mf F(|\Phi(w)|,|\Phi(x)|)+\mf F(|\Phi(x)|,|{\wti\Sigma}_x|) < \eta_{\wti\Sigma}/2 + 4\bar\xi.
        \end{align}
        By taking $4\bar\xi < \eta_{\wti\Sigma}/2$, we know ${\wti\Sigma}_x=\varphi_{\wti\Sigma}(v_x)\in\mc W_{\wti\Sigma}$ for some $v_x\in {\mb B}^3_{1-\epsilon}$ (by Proposition \ref{Prop: nullily and graph} and the assumption before Claim \ref{Claim: constants}). 
        Similarly, ${\wti\Sigma}_y=\varphi_{\wti\Sigma}(v_y)\in\mc W_{\wti\Sigma}$ for some $v_y\in {\mb B}^3_{1-\epsilon}$. 
        Note that $\varphi_{\wti\Sigma}: \overline {\mb B}^3\to \wcS^{k,\alpha}_{G_\pm}$ is an embedding. 
        Hence, we can take $\theta>0$ small enough so that $(1-t)v_x+tv_y\in\mb B_{1-\epsilon}^3$ and $\varphi_{\wti\Sigma}((1-t)v_x+ tv_y)\in \mc U_{\theta_0}^{k,\alpha}({\wti\Sigma}_x)$ for all $t\in[0,1]$. 
        Define then 
        \[\Phi'((1-t)x+ty):=\varphi_{\wti\Sigma}((1-t)v_x+ tv_y)\in \mc U_{\theta_0}^{k,\alpha}({\wti\Sigma}_x) \qquad \mbox{for any $t\in[0,1]$}.\] 
        By (1)(2) and $\theta_0<\delta<\theta_1$, we still have $\Phi'(z)\in \mc U_{\theta_1}^{k,\alpha}({\wti\Sigma}_x)\cap \mc U_{\theta_1}^{k,\alpha}({\wti\Sigma}_y)$, $\mf F(\Phi'(z),{\wti\Sigma}_x)<\xi_1$, and $\mf F(\Phi'(z),{\wti\Sigma}_y)<\xi_1$, for all $z\in [x,y]$. 
    \end{proof}

    By a similar procedure, we can further extend $\Phi'$ to the $2$-skeleton of $X(l)$. 
    \begin{claim}[Extends to $2$-skeleton]\label{Claim : extend to 2 cells}
        For any $\xi_2,\theta_2 >0$, we can shrink $\bar\xi>0$ small enough and $C^{k,\alpha}$-continuously extend $\Phi'$ to the $2$-skeleton of $X(l)$ so that for any $2$-cell $\tau\in X(l)$,
        \begin{itemize}
            \item[(i)] $\mf F(\Phi'(z),{\wti\Sigma}_x) < \xi_2$ and $\Phi'(z)\in \mc U_{\theta_2}^{k,\alpha}({\wti\Sigma}_x)$, where $x$ is any vertex of $\tau$ and $z\in\tau$;
            \item[(ii)] if $\tau\in \wti X$, then $\Phi'(z)\in \mc W_{\wti\Sigma}$ for any $z\in\tau$. 
        \end{itemize}
    \end{claim}
    \begin{proof}[Proof of Claim \ref{Claim : extend to 2 cells}]
        Firstly, by Claim \ref{Claim: constants} and the embeddedness of $\varphi_{\wti\Sigma}$, let 
        \begin{itemize}
            \item[(1)] $\delta=\tilde\delta(\xi_2)\in (0,\theta_2)$ be given by Claim \ref{Claim: constants}(i);
            \item[(2)] $\theta_0=\tilde\theta(\delta)\in (0,\delta)$ 
            be given by Claim \ref{Claim: constants}(ii);
            \item[(3)] $\theta_1'\in (0, \theta_0)$ so that if $v_1,v_2\in {\mb B}^3_{1-\epsilon}$ satisfy $\varphi_{\wti\Sigma}(v_2)\in \mc U^{k,\alpha}_{\theta_1'}(\varphi_{\wti\Sigma}(v_1))$, then $(1-t)v_1+tv_2\in\mb B^3_{1-\epsilon}$ and $\varphi_{\wti\Sigma}((1-t)v_1+tv_2)\in \mc U^{k,\alpha}_{\theta_0}(\varphi_{\wti\Sigma}(v_1))$ for all $t\in [0,1]$, where $\epsilon>0$ is fixed before Claim \ref{Claim: constants};
            \item[(4)] $\theta_1=\tilde\theta(\theta_1')\in (0,\theta_1')$ be given by Claim \ref{Claim: constants}(ii);
            \item[(5)] $\xi_1\in (0,\xi_2)$ so that $4\xi_1<\eta_{\wti\Sigma}/2$. 
        \end{itemize}
        Next, let $\Phi'$ be extended as in Claim \ref{Claim : extend to 1 cells} with respect to the constants $\xi_1$ in (5) and $\theta_1$ in (4). 
        Then for any $2$-cell $\tau$ of $X(l)$, take vertices $x_1,x_2,x_3$, so that $[x_1,x_2],[x_1,x_3]\subset\tau$. 
        By Claim \ref{Claim : extend to 1 cells}(i) and (4), we conclude that $\wti\Sigma_{x_1}\in \mc U^{k,\alpha}_{\theta_1}({\wti\Sigma}_{x_2})\cap \mc U^{k,\alpha}_{\theta_1}({\wti\Sigma}_{x_3})$, and thus 
        \[\Phi'(z)\in \mc U^{k,\alpha}_{\theta_1}({\wti\Sigma}_{x_2})\cup \mc U^{k,\alpha}_{\theta_1}({\wti\Sigma}_{x_3}) \subset \mc U^{k,\alpha}_{\theta_1'}({\wti\Sigma}_{x_1}),\qquad \forall z\in\bd\tau.\]
        In particular, for any vertex $x$ of $\tau$, we have $\wti\Sigma_x\in \mc U^{k,\alpha}_{\theta_1'}({\wti\Sigma}_{x_1})\subset\mc U^{k,\alpha}_{\theta_0}({\wti\Sigma}_{x_1}) \subset \mc U^{k,\alpha}_{\delta}({\wti\Sigma}_{x})$. 

        If $\tau\notin\wti X$, then for all $z\in\bd\tau$, there exists $h_z\in C^{k,\alpha}_{G_\pm}({\wti\Sigma}_{x_1})$ with $|h_z|_{k,\alpha}<\theta_0$ so that $\Phi'(z)=\Graph_{\wti\Sigma_{x_1}}(h_z)$. 
        Let $\phi:\overline{\mb B}^2\to\tau$ be a homeomorphism, and define 
        \[\Phi'(\phi(tu)) := \Graph_{\wti\Sigma_{x_1}}(t h_{\phi(u)})\in \mc U_{\theta_0}^{k,\alpha}({\wti\Sigma}_{x_1}) \qquad \mbox{ for any $t\in [0,1]$ and $u\in\bd\overline{\mb B}^2$}.\] 
        Hence, by (2) and $\theta_0<\delta<\theta_2$, we conclude that $\Phi'(z)\in \mc U_{\theta_0}^{k,\alpha}({\wti\Sigma}_{x_1})\subset \mc U_\delta^{k,\alpha}({\wti\Sigma}_{x})\subset \mc U_{\theta_2}^{k,\alpha}({\wti\Sigma}_{x})$ for any $z\in \tau$ and vertex $x$ of $\tau$, which further implies $\mf F(\Phi'(z),{\wti\Sigma}_x)<\xi_2$ by (1).

        If $\tau\in\wti X$, then by Claim \ref{Claim : extend to 1 cells}(ii) and its proof, for any $z\in\bd \tau$, there exists $v_z\in \mb B^3_{1-\epsilon}$ so that $\Phi'(z)=\varphi_{\wti\Sigma}(v_z)\in \mc W_{\wti\Sigma}$. 
        In particular, ${\wti\Sigma}_{x_1}=\varphi_{\wti\Sigma}(v_1)\in\mc W_{\wti\Sigma}$ for some $v_1\in {\mb B}^3_{1-\epsilon}$. 
        Since $\Phi'(\bd\tau)\subset \mc U^{k,\alpha}_{\theta_1'}({\wti\Sigma}_{x_1})$, we can use (3) and the homeomorphism $\phi:\overline{\mb B}^2\to\tau$ to define 
        \[\Phi'(\phi(tu)):=\varphi_{\wti\Sigma}((1-t)v_1+ tv_{\phi(u)})\in \mc U^{k,\alpha}_{\theta_0}({\wti\Sigma}_{x_1})\qquad \mbox{ for all $t\in [0,1]$ and $u\in\bd \overline{\mb B}^2$},\] 
        which gives (ii). 
        Again, (i) follows from (1)(2) and the fact that $\theta_0<\delta<\theta_2$. 
    \end{proof}

    The above construction is slightly different from the proof of \cite{ambrozio2024rigidity}*{Proposition 6.7}, and is also valid when $X$ is a cubical complex. 
	By a very similar construction, one can further extend $\Phi'$ to the whole $3$-dimensional complex $X$. 
	We leave the proof to readers. 
    \begin{claim}[Extends to $X$]\label{Claim : extend to 3 cells}
        For any $\xi_3,\theta_3 >0$, we can shrink $\bar\xi>0$ small enough and $C^{k,\alpha}$-continuously extend $\Phi'$ to $X$ so that for every $3$-cell $\tau\in X(l)$ and $z\in \tau$,
        \begin{itemize}
            \item[(i)] $\mf F(\Phi'(z),{\wti\Sigma}_x) < \xi_3$ and $\Phi'(z)\in \mc U_{\theta_3}^{k,\alpha}({\wti\Sigma}_x)$, where $x$ is any vertex of $\tau$;
            \item[(ii)] if $\tau\in \wti X$, then $\Phi'(z)\in \mc W_{\wti\Sigma}$. 
        \end{itemize}
    \end{claim}

    Finally, we show the following main result in this step. 
    \begin{claim}[Local graphical perturbation]\label{Claim: local graphical perturbation}
        Given $|{\wti\Sigma}|\in\wti{\mc V}^G$ and $\xi>0$, there exists $0<\bar\xi<\xi/2$ so that if $\Phi_i: X_i\to \wcS^{k,\alpha}_{G_\pm}$ satisfies \eqref{Eq: step2 uniform close to TG}, then we can find a continuous map $\Phi_i':X_i\to\wcS^{k,\alpha}_{G_\pm}$ satisfying $\Phi_i'\in\mc P_3^{G_\pm}$, and
        \begin{itemize}
            \item[(i)] $\mf F(\Phi_i'(x),\Phi_i(x))<2\xi$;
            \item[(ii)] for any $x\in X_i$, either $\mf F(|\Phi_i'(x)|, |{\wti\Sigma}|)>\eta_{\wti\Sigma}/3$ or $\Phi_i'(x)\in \mc W_{\wti\Sigma}$. 
        \end{itemize}
    \end{claim}
    \begin{proof}[Proof of Claim \ref{Claim: local graphical perturbation}]
        Let $0<\xi_3=\theta_3<\min\{\xi, \delta/2,\eta_{\wti\Sigma}/12\}$, where $\delta>0$ is given by \cite{ambrozio2024rigidity}*{Lemma 2.7}. 
        Consider the barycentric subdivision $X_i(l_i)$ so that \eqref{Eq: step2 subdivision} is satisfied and define $X_i',\wti X_i$ as in \eqref{Eq: step 2 domain close to Sigma}. 
        Then by Claim \ref{Claim : extend to 3 cells}, we take $\bar\xi< \xi_3/2 $ and obtain a continuous map $\Phi_i': X_i\to \wcS^{k,\alpha}_{G_\pm}$. 
        For any $3$-cell $\tau\in X_i(l_i)$ with $z\in\tau$ and vertex $x\in\tau$, we have $\mf F(\Phi_i'(z),{\wti\Sigma}_x)<\xi_3$, and by \eqref{Eq: step2 subdivision}\eqref{Eq: step2 define vertex},
        \[ \mf F(\Phi_i'(z),\Phi_i(z)) \leq \mf F(\Phi_i'(z), {\wti\Sigma}_x) + \mf F({\wti\Sigma}_x,\Phi_i(x)) + \mf F(\Phi_i(x),\Phi_i(z)) < 2\xi_3 < \min\{2\xi,\delta,\eta_{\wti\Sigma}/6\}.  \]
        In particular, this shows (i) and $\Phi_i'\in\mc P^{G_\pm}_3$ by \cite{ambrozio2024rigidity}*{Lemma 2.7}. 
        Additionally, by Claim \ref{Claim : extend to 3 cells}(ii), we know $\Phi_i'(\wti X_i)\subset \mc W_{\wti\Sigma}$. 
        Meanwhile, for any $z\in X_i\setminus \wti X_i$, we have $z\notin X_i'$ and $\mf F(|\Phi_i(z)|, |{\wti\Sigma}|)\geq \eta_{\wti\Sigma}/2$. Hence, $\mf F(|\Phi_i'(z)|, |{\wti\Sigma}|)\geq \mf F( |{\wti\Sigma}|,|\Phi_i(z)|)-\mf F(\Phi_i(z),\Phi_i'(z))> \eta_{\wti\Sigma}/3$.
        This shows (ii). 
    \end{proof}


    \medskip
    \noindent{\bf Step 3.} \textit{Let
    \begin{itemize}
        \item $\mc G\subset \wti{\mc V}^G$ be the set of $|{\wti\Sigma}|\in\wti{\mc V}^G$ satisfying that $\varphi_{\wti\Sigma}$ and $\eta_{\wti\Sigma}$ (in Proposition \ref{Prop: nullily and graph}) can be chosen so that $\varphi_{\wti\Sigma}(v)\in  \wcS^{k,\alpha}_{G_\pm}$ is a minimal surface (and thus smooth) for all $v\in\overline{\mb B}^3_1$;
        \item $\mc B := \wti{\mc V}^G\setminus \mc G$, which is compact in the $C^{k,\alpha}$-topology (since $\mc G$ is open). 
    \end{itemize}
    We can perturb $\mf \Lambda (\{\Phi_i\})$ away from $\mc B$, and obtain a new min-max sequence $\wti\Psi_i: \wti{Y}_i^3\to \wcS^{k,\alpha}_{G_\pm}$ so that $\{\wti \Psi_i\}_{i\in\mb N}\subset \mc P^{G_\pm}_3$ and $\wti\Psi_i(\wti{Y}_i)$ converges uniformly to a compact set 
    \[\wti {\mc G}:= \mf \Lambda(\{\wti \Psi_i\}_{i\in\mb N}) \subset \wti{\mc V}^G\setminus\mc B=\mc G\]
    in the $\mf F$-topology, i.e. $\lim_{i\to\infty} \sup_{x\in\wti Y_i}\mf F(|\wti\Psi_i(x)|,\wti{\mc G}) = 0$.}

    By the definitions, for any ${\wti\Sigma}\in\mc B$, there exists $v_i\to 0\in \mb B^3$ so that $\varphi_{\wti\Sigma}(v_i)$ is not minimal for each $i$. 
    Hence, given ${\wti\Sigma}\in\mc B$, we can find $v_{\wti\Sigma} \in \mb B^3_1$ and $\gamma_{\wti\Sigma}>0$ so that 
    \begin{itemize}
        \item[(a)] $\varphi_{\wti\Sigma}(v_{\wti\Sigma})$ is not minimal;
        \item[(b)] $t_{\wti\Sigma}:= |v_{\wti\Sigma}|$ satisfies $\mf F(|\varphi_{\wti\Sigma}(v)|, |{\wti\Sigma}|)<\eta_{\wti\Sigma}/8$ for any $v\in \mb B^3_{t_{\wti\Sigma}}$;
        \item[(c)] $\mf F(|{\wti\Sigma}|, |\varphi_{\wti\Sigma}(v)|) \geq \gamma_{\wti\Sigma}$ for any $v\in \overline{\mb B}^3_1 \setminus \mb B^3_{t_{\wti\Sigma}/2}$. 
    \end{itemize}
    
    Using the perturbation procedure in Step 2, we first show the following result to perturb $\mf \Lambda (\{\Phi_i\})$ away from a given ${\wti\Sigma}\in\mc B$. 
    \begin{claim}[Perturb away from ${\wti\Sigma}\in\mc B$]\label{Claim: step3 perturb away from S in B}
        Given ${\wti\Sigma}\in\mc B$, there exists a sequence of continuous maps $\Psi_i: Y_i^3\to\wcS^{k,\alpha}_{G_\pm}$ so that $\{\Psi_i\}_{i=1}^\infty\subset \mc P^{G_\pm}_{3}$ and 
        \[\mf \Lambda(\{\Psi_i\}_{i\in\mb N}) \subset \mf \Lambda(\{\Phi_i\}_{i\in\mb N}) \setminus  B^{\mf F}_{\gamma_{\wti\Sigma}}(|{\wti\Sigma}|).  \]
    \end{claim}
    \begin{proof}[Proof of Claim \ref{Claim: step3 perturb away from S in B}]
        Since $\varphi_{\wti\Sigma}(v_{\wti\Sigma})$ is not minimal, we can take $0<3\xi <\mf F(\varphi_{\wti\Sigma}(v_{\wti\Sigma}), \wti {\mc T}^G)$. 
        Then for each $i$ sufficiently large, we can apply Claim \ref{Claim: local graphical perturbation} to $\Phi_i$ satisfying \eqref{Eq: step2 uniform close to TG}, and obtain a spherical $(G_\pm,3)$-sweepout $\Phi'_i: X_i\to \wcS^{k,\alpha}_{G_\pm}$. 
        By Claim \ref{Claim: local graphical perturbation}(i) and \eqref{Eq: step2 uniform close to TG}, we know $\Phi'_i(X_i)\subset B^{\mf F}_{3\xi}(\wti{\mc T}^G)$, and thus $\varphi_{\wti\Sigma}(v_{\wti\Sigma})\notin \Phi'_i(X_i)$ since $3\xi< \mf F(\varphi_{\wti\Sigma}(v_{\wti\Sigma}), \wti {\mc T}^G)$. 
        Therefore, $\varphi_{\wti\Sigma}^{-1}(\Phi'_i(X_i)) \cap \bd \mb B^3_{t_{\wti\Sigma}}$ has an open neighborhood $\Omega$ in the $2$-sphere $\bd \mb B^3_{t_{\wti\Sigma}}$ with $v_{\wti\Sigma}\notin\closure(\Omega)$. 
        Note that $H^k(\Omega;\mb Z_2)=0$ for all $k\geq 2$ since $\Omega\subset\subset \bd \mb B^3_{t_{\wti\Sigma}}\setminus\{v_{\wti\Sigma}\}$. 

        Next, let $\mc X_2:=\varphi_{\wti\Sigma}(\mb B^3_{t_{\wti\Sigma}})$. As in the proof of \cite{ambrozio2024rigidity}*{Proposition 6.8}, we can choose an open neighborhood $\mc X_1$ of $\Phi_i'(X_i)\setminus\varphi_{\wti\Sigma}(\mb B^3_{t_{\wti\Sigma}})$ in $\{S\in\wcS^{k,\alpha}_{G_\pm}: \mf F(|S|,|{\wti\Sigma}|)>\eta_{\wti\Sigma}/3\}\cup \mc W_{\wti\Sigma}$ under the $\mf F$-metric of mod $2$ cycles so that 
        \[ \varphi_{\wti\Sigma}(\{sv: v\in\Omega, s\in(1-\delta,1)\}) = \mc X_1\cap\mc X_2 \qquad \mbox{for some $\delta>0$}. \]
        Thus, $H^3(\mc X_2;\mb Z_2)=0$, and $H^k(\mc X_1\cap\mc X_2;\mb Z_2)=0$ for $k\geq 2$. 
        Additionally, by Claim \ref{Claim: local graphical perturbation}(ii), we know $\Phi_i'(X_i)\subset \mc X:=\mc X_1\cup\mc X_2$. 
        It then follows from the Mayer-Vietoris sequence that $i_1^*: H^3(\mc X;\mb Z_2)\to H^3(\mc X_1;\mb Z_2)$ is an isomorphism, where $i_1:\mc X_1\to\mc X$ is the inclusion map. 
        Hence, there exists a $3$-dimensional complex $Y_i$ and a spherical $(G_\pm,3) $-sweepout $\Psi_i: Y_i\to \mc X_1\subset \wcS^{k,\alpha}_{G_\pm}$. 
        By taking a sequence of $\xi\to 0$, the desired result follows from a diagonal argument. 
    \end{proof}

    Note that $\mc B\subset \wti{\mc V}^G$ is compact in the $\mf F$-topology. 
    Thus, there exists $\{{\wti\Sigma}_1,\dots,{\wti\Sigma}_m\}\subset\mc B$ with $\mc B\subset\cup_{i=1}^m B^{\mf F}_{\gamma_{{\wti\Sigma}_i}}(|{\wti\Sigma}_i|) $, where $\gamma_{\wti\Sigma_i}$ is given by (c). 
    We can then apply Claim \ref{Claim: step3 perturb away from S in B} successively for each $\{{\wti\Sigma}_i\}_{i=1}^m$, and obtain a min-max sequence $\wti\Psi_i:\wti Y_i\to\wcS^{k,\alpha}_{G_\pm}$ so that $\{\wti \Psi_i\}_{i\in\mb N}\subset \mc P^{G_\pm}_3$ and $\wti{\mc G}:=\mf \Lambda(\{\wti\Psi_i\}_{i\in\mb N}) \subset \mf \Lambda(\{\Phi_i\}_{i\in\mb N})\setminus \mc B$.

    \medskip
    {\noindent \bf Step 4.} {\it $\wti\Psi_i$ can be further perturbed to a spherical $(G_\pm,3)$-sweepout $\Psi_i:\wti Y_i\to \mc G$ so that $\lim_{i\to\infty} \sup_{x\in\wti Y_i}\mf F(|\Psi_i(x)|,\wti{\mc G}) = 0$.}

    We first show that $\mc G$ is a $3$-manifold, then the geodesic ball on $\mc G$ will help us to construct $\Psi_i$ as in Step 2. 
    \begin{claim}\label{Claim: step4 G is manifold}
        $\mc G$ is a smooth $3$-manifold with local chart given by $\varphi_{\wti\Sigma}$. Additionally, every $\wti\Sigma\in\mc G$ has index $1$ and nullity $3$. 
    \end{claim}
    \begin{proof}[Proof of Claim \ref{Claim: step4 G is manifold}]
        Given ${\wti\Sigma}\in\mc G$, since $\varphi_{\wti\Sigma}(v)$ is minimal for all $v\in\overline{\mb B}^3_1$, the function $v\mapsto \Area(\varphi_{\wti\Sigma}(v))$ has vanished derivative, which implies $\Area(\varphi_{\wti\Sigma}(v))\equiv \Area({\wti\Sigma})=W$. 
        Additionally, for any $v\in \overline{\mb B}^3_1$, the embeddedness of $\varphi_{\wti\Sigma}$ implies that $\Null(\varphi_{\wti\Sigma}(v))\geq 3$, and $0$ is an equivariant eigenvalue $\tilde\lambda_{k_0}^G(\varphi_{\wti\Sigma}(v))$ of $L_{\varphi_{\wti\Sigma}(v)}$ for some $k_0\geq 1$. 
        	Recall that $W=\sigma_0^{G_\pm}(S^3,g_{S^3})$. 
        	Hence, $\varphi_{\wti\Sigma}(v)$ is area minimizing in $\wcS_{G_\pm}$, which implies that $0\leq \lambda_1(\pi(\varphi_{\wti\Sigma}(v)))=\tilde\lambda^G_1(\varphi_{\wti\Sigma}(v))=\tilde\lambda_2(\varphi_{\wti\Sigma}(v))$ by Proposition \ref{Prop: 2nd eigenfunctions}. 
        	Because $0=\tilde\lambda_{k_0}^G( \varphi_{\wti\Sigma}(v))$ for some $k_0\geq 1$, one immediately obtains $0=\tilde\lambda_{1}^G( \varphi_{\wti\Sigma}(v))$. 
        	Combined with the last statement in Proposition \ref{Prop: 2nd eigenfunctions}, we have $\Index(\varphi_{\wti\Sigma}(v))=1$ and $\Null(\varphi_{\wti\Sigma}(v))=3$ for all $v\in \mb B^3_1$. 
        This shows 
        $\varphi_{\wti\Sigma}(\mb B^3)\subset \mc G$, and the local charts $\varphi_{\wti\Sigma}\llcorner\mb B^3$ give a smooth $3$-manifold structure of $\mc G$. 
    \end{proof}

    Next, let $h$ be a complete Riemannian metric on $\mc G$, and $d_h$ be the distance function under $h$. 
    By the compactness of $\wti{\mc G}$, we can take $\bar\rho>0$ so that the $h$-geodesic ball $B^h_\rho({\wti\Sigma})$ of radius $\rho\in (0,\bar\rho]$ at ${\wti\Sigma}\in\wti{\mc G}$ is geodesically convex. 
    Additionally, as in \cite{ambrozio2024rigidity}*{(7)}, we can assume 
    \begin{align}\label{Eq: Step 4 metric equivalent}
        c\mf F({\wti\Sigma},{\wti\Sigma}')\leq d_h({\wti\Sigma},{\wti\Sigma}')\leq \mf F({\wti\Sigma},{\wti\Sigma}')\quad \mbox{for all ${\wti\Sigma}\in\wti{\mc G}$ and ${\wti\Sigma}'\in B^h_{\bar \rho}({\wti\Sigma})$}
    \end{align}
    for some $0<c<1/2$. 

    Take $0<\rho<\bar\rho/3$. 
    Then since $\lim_{i\to\infty} \sup_{x\in\wti Y_i}\mf F(|\wti\Psi_i(x)|,\wti{\mc G}) = 0$, one can use the proof of \cite{ambrozio2024rigidity}*{Proposition 6.6} to show that for each $i$ sufficiently large, 
    \begin{align}\label{Eq: Step 4 close to image}
        \sup_{x\in\wti Y_i} \mf F(\wti\Psi_i(x), \wti{\mc G}) < \rho/3.
    \end{align}
    Consider the barycentric subdivision $\wti Y_i(l_i)$ of $\wti Y_i$ so that 
    \begin{align}\label{Eq: Step 4 subdivision}
        \mf F(\wti\Psi_i(x), \wti\Psi_i(y))<\rho/3
    \end{align}
    for every $3$-cell $\alpha\in \wti{Y_i}(l_i)$ and $x,y\in\alpha$. 
    Then we define 
    \begin{align}\label{Eq: Step 4 define on vertex}
        \Psi_i(x) := {\wti\Sigma}_x \in \wti{\mc G} \quad\mbox{with $\mf F(\wti\Psi_i(x),{\wti\Sigma}_x)<\rho/3$} \qquad \mbox{for any vertex $x\in \wti Y_i(l_i)$.}
    \end{align}
    
    For any $1$-cell $[x,y]\in \wti Y_i(l_i)$, we have $d_h({\wti\Sigma}_x,{\wti\Sigma}_y)\leq \mf F({\wti\Sigma}_x,{\wti\Sigma}_y)<\rho$ by \eqref{Eq: Step 4 metric equivalent}\eqref{Eq: Step 4 subdivision}\eqref{Eq: Step 4 define on vertex}. 
    Using the geodesic convexity of $B^h_\rho({\wti\Sigma}_x)$ and $B^h_\rho({\wti\Sigma}_y)$ in $\mc G$, we can extend $\Psi_i$ to $[x,y]$ by the geodesic curve connecting ${\wti\Sigma}_x$ and ${\wti\Sigma}_y$, which gives $\Psi_i:[x,y]\to B^h_\rho({\wti\Sigma}_x)\cap B^h_\rho({\wti\Sigma}_y)$. 
    In addition, by \eqref{Eq: Step 4 metric equivalent}, 
    \begin{align*}
        \mf F(\Psi_i(z),\wti\Psi_i(z)) \leq \mf F(\Psi_i(z),{\wti\Sigma}_x) + \mf F({\wti\Sigma}_x,\wti\Psi_i(x)) + \mf F(\wti\Psi_i(x), \wti\Psi_i(z)) 
        \leq c^{-1}\rho + 2\rho/3
    \end{align*}
    for all $z\in [x,y]$. 

    Next, for any $2$-cell $\sigma_2\in \wti Y_i(l_i)$ with vertices $\{x_i\}_{i=1}^3\subset\sigma_2$, we have $\Psi_i(\bd\sigma_2)\subset B_{2\rho}^h({\wti\Sigma}_{x_i})$, $1\leq i\leq 3$, by the previous construction. 
    Then, we take any ${\wti\Sigma}\in (\cap_{i=1}^3 B_{2\rho}^h({\wti\Sigma}_{x_i})) \setminus \Psi_i(\bd\sigma_2)$, and define $\Psi_i(x_0)={\wti\Sigma}$ for the barycenter $x_0$ of $\sigma_2$. 
    Using the geodesic convexity, we can extend $\Psi_i$ on the segment from $x_0$ to $z\in\bd\sigma_2$ by the geodesic connecting ${\wti\Sigma}$ to $\Psi_i(z)$, which gives $\Psi_i:\sigma_2\to \cap_{i=1}^3 B^h_{2\rho}(\wti\Sigma_{x_i})$. 
    Similarly, we have $\mf F(\Psi_i(z),\wti\Psi_i(z)) \leq 2c^{-1}\rho + 2\rho/3$ for all $z\in\sigma_2$. 
    
    To proceed, take any $3$-cell $\sigma_3\in\wti Y_i(l_i)$ with any vertex $x\in\sigma_3$. 
    By previous constructions, we know $\Psi_i(\bd\sigma_3)\subset B^h_{3\rho}({\wti\Sigma}_x)$. 
    Then we can similarly define $\Psi_i$ at the barycenter $x_0\in\sigma_3$ to be some ${\wti\Sigma}\in B^h_{3\rho}({\wti\Sigma}_x)\setminus \Psi_i(\bd\sigma_3)$, and extend $\Psi_i$ on the segment from $x_0$ to $z\in\bd\sigma_3$ by the geodesic connecting ${\wti\Sigma}$ to $\Psi_i(z)$. 
    Note that the extended $\Psi_i$ also satisfies $\mf F(\Psi_i(z),\wti\Psi_i(z)) \leq 3c^{-1}\rho + 2\rho/3$ for all $z\in\sigma_3$. 

    Hence, we have a sequence of continuous maps $\Psi_i: \wti Y_i\to \mc G$, ($i\in\mb N$ by relabeling), with $\mf F(\Psi_i(z),\wti\Psi_i(z)) \leq 3c^{-1}\rho + 2\rho/3$ for all $z\in\wti Y_i$. 
    After taking a sequence of $\rho\to 0$ and applying a diagonal argument, we have $\lim_{i\to\infty} \sup_{x\in\wti Y_i}\mf F(|\Psi_i(x)|,\wti{\mc G}) = 0$ and $\{\Psi_i\}_{i\in\mb N}\subset\mc P^{G_\pm}_3$ (\cite{ambrozio2024rigidity}*{Lemma 2.7}).

    \medskip
    {\noindent \bf Step 5.} \textit{Up to a subsequence, $\Psi_i(\wti Y_i)\subset \mc G$ is diffeomorphic to $\mb {RP}^3$ and gives a $G_\pm$-Zoll family of minimal spheres.}

    Without loss of generality, we can assume $\wti Y_i$ is connected. 
    Denote by $Z_i:=\Psi_i(\wti Y_i)$, and by $Z\subset \mc G$ any open neighborhood of $Z_i$. 
    Since $\Psi_i$ is a $(G_\pm, 3)$-sweepout, we have non-trivial maps
    \[\mb Z_2\cong H^3(\mc Z_2(S^3;\mb Z_2);\mb Z_2) \stackrel{i^*} {\longrightarrow} H^3(\mc G;\mb Z_2) \stackrel{\iota^*} {\longrightarrow} H^3(Z;\mb Z_2) \stackrel{\iota_i^*} {\longrightarrow} H^3(Z_i;\mb Z_2) \stackrel{\Psi_i^*} {\longrightarrow} H^3(\wti Y_i;\mb Z_2),\]
    where $\iota_i,\iota,$ and $i$ are natural inclusions. 
    In particular, $H^3(Z;\mb Z_2)\neq 0$, which implies $Z$ is a closed $3$-manifold. 
    Hence, up to a subsequence, we may assume $Z_i= Z$ for all $i\in\mb N$, where $Z$ is a connected component of $\mc G$. 
    Then, consider $Z':=\{\mbox{oriented sphere } {\wti\Sigma} \in Z\}$, which is a double cover of $Z$.  
    Since $Z$ forms a sweepout, there is a curve in $Z'$ from an oriented sphere ${\wti\Sigma}$ to the same sphere with the opposite orientation, which implies that $Z'$ is connected. 

    Let $\mc PZ':=\{({\wti\Sigma},p): {\wti\Sigma}\in Z', p\in {\wti\Sigma}\}$ be the space of pointed spheres, and $\wti {G}_2(S^3)\cong S^3\times S^2$ be the set of oriented tangent $2$-planes of $S^3$. 
    Define the continuous map $T:\mc PZ'\to \wti {G}_2(S^3)$ by $T({\wti\Sigma},p):=T_p{\wti\Sigma}$. 
    Then by the proof in \cite{ambrozio2024rigidity}*{Section 6.4}, we know $\mc PZ'$ is a compact $5$-manifold, and $T$ is a homeomorphism. 
    Hence, for any $2$-plane $P$ in $T_pS^3$, there exists a unique ${\wti\Sigma}\in Z$ with $p\in{\wti\Sigma}$ so that $T_p{\wti\Sigma}=P$. 
    Moreover, the proof in \cite{ambrozio2024rigidity}*{Section 6.4} also implies that $Z'$ is simply connected. 
    Thus, $\pi_1(Z)=\mb Z_2$ and $Z\cong RP^3$ by geometrization. 
    Together, this shows that $Z$ is a $G_\pm$-Zoll family of minimal $2$-spheres for $(S^3,g_{S^3})$. 
\end{proof}

\section{Weighted Crofton formula under the surface Zoll metric}

In this section, we show a weighted Crofton formula in $RP^3$ with a surface Zoll metric $g_{RP^3}$. 
Let $\{\Sigma_a\}_{a\in \mb {RP}^3}$ be the Zoll family of minimal $RP^2$ in $(RP^3, g_{RP^3})$ so that 
\begin{itemize}
	\item[(i)] $a\in \mb {RP}^3 \mapsto \Sigma_a\subset RP^3$ is a $C^1$ map into the space of $C^{3,\alpha}$ embeddings of $RP^2$ into $RP^3$;
	\item[(ii)] for any $p\in RP^3$ and $2$-dimensional subspace $P\subset T_p(RP^3)$, there is a unique $a\in \mb {RP}^3$ satisfying $p\in\Sigma_a$ and $T_p\Sigma_a=P$. 
\end{itemize}
Since every $\Sigma_a$ is minimal, the function $a\mapsto \Area(\Sigma_a)$ has zero derivative, and thus 
\begin{align}\label{Eq: constant area}
	\Area(\Sigma_a)\equiv W \qquad \forall a\in \mb {RP}^3,
\end{align}
which coincides with the projective area widths of $(RP^3, g_{RP^3})$. 
To avoid ambiguity, denote by 
\[ A:=\mb {RP}^3 \qquad {\rm and }\qquad (M,g_M):=(RP^3, g_{RP^3}) \]
the parameter space and the ambient Riemannian manifold. 
We also use the following notation:
\begin{itemize}
	\item $N\Sigma_a$: the normal bundle of $\Sigma_a$ in $M$;
	\item $\Gamma^{3,\alpha}(N\Sigma_a)$: the space of $C^{3,\alpha}$ sections of the normal bundle $N\Sigma_a$;
	\item $G_k(M)$: the Grassmannian bundle of un-oriented tangent $k$-subspaces over $M$. 
\end{itemize}

\subsection{Incidence manifolds}

Given the Zoll family $\{\Sigma_a\}_{a\in A}$, we define its incidence set by 
\begin{align}\label{Eq: incidence set}
	\mc I:=\{(a,p)\in A\times M: p\in \Sigma_a\}. 
\end{align}
We first show the $C^1$-manifold structure of $\mc I$. 

\begin{lemma}\label{Lem: C1-manifold structure of incidence}
	The incidence set $\mc I$ is a $C^1$ embedded compact hypersurface in $A\times M$. 
\end{lemma}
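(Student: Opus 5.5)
The plan is to write $\mc I$ locally as the zero set of a $C^1$ function on $A\times M$ and show that this function has nonvanishing differential in the $M$-direction. Compactness is immediate: $\mc I$ is closed in the compact space $A\times M$, because $(a,p)\mapsto \Sigma_a$ is continuous in $C^{3,\alpha}$ and hence in $C^0$.

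For the local description, fix $(a_0,p_0)\in\mc I$. By (i), for $a$ near $a_0$ we can write
\[\Sigma_a=\exp^\perp_{\Sigma_{a_0}}(u_a\nu_{a_0}),\]
where $\nu_{a_0}$ is a local unit normal of $\Sigma_{a_0}$ on a neighborhood of $p_0$ (a local choice suffices, even though $RP^2$ is nonorientable). The graph functions $a\mapsto u_a\in C^{3,\alpha}$ depend $C^1$ on $a$ and satisfy $u_{a_0}=0$.

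Next, let $\Theta:(q,t)\mapsto \exp_q(t\nu_{a_0}(q))$ be the local Fermi coordinates around $p_0$. Using them, define
\[F(a,\Theta(q,t)):=t-u_a(q),\]
so that $\mc I=\{F=0\}$ locally. Since $u$ is $C^1$ in $a$ with values in $C^{3,\alpha}\subset C^1$, the evaluation map $(a,q)\mapsto u_a(q)$ is jointly $C^1$, and therefore $F$ is $C^1$. Its partial derivative in $t$ equals $1$, so $dF\neq0$. The implicit function theorem then shows that $\mc I$ is locally a $C^1$ graph over $A\times\Sigma_{a_0}$, i.e. a $C^1$ embedded hypersurface of dimension $5$ in the $6$-dimensional $A\times M$.

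The main obstacle is justifying the joint $C^1$ regularity of $(a,q)\mapsto u_a(q)$. Condition (i) gives a $C^1$ map into the space of embeddings, so first I would convert embeddings into normal graphs. I would write $\Sigma_a=F_a(RP^2)$ with $F_a$ a $C^{3,\alpha}$ embedding depending $C^1$ on $a$. Composing with the inverse of the Fermi coordinate map $\Theta$ gives $\Theta^{-1}\circ F_a=(\psi_a,u_a\circ\psi_a)$, where $\psi_a=\pi_{\Sigma_{a_0}}\circ F_a$ is a $C^{3,\alpha}$ map close to the diffeomorphism $\psi_{a_0}$. Inverting $\psi_a$ costs one derivative, but the inverse still depends $C^1$ on $a$ with values in $C^{2,\alpha}$, which is enough to make $u_a=\text{pr}_2\circ\Theta^{-1}\circ F_a\circ\psi_a^{-1}$ $C^1$ in $a$ into $C^{2,\alpha}$. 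Joint $C^1$ regularity in $(a,q)$ then follows from the continuity of the evaluation map $C^1\times\Sigma\to\mathbb R$ and of its derivatives.

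Alternatively, one could avoid graphs entirely. Consider the map
\[\Phi:A\times RP^2\to A\times M,\qquad \Phi(a,x)=(a,F_a(x)).\]
It is $C^1$, injective, and immersive (its differential is triangular with the injective block $dF_a$), and it is a homeomorphism onto its image by compactness. Its image is exactly $\mc I$, so this directly gives a compact $C^1$ embedded submanifold of dimension $3+2=5$. I would present this second route as the main proof, since it is cleaner, and keep the local graph description as a remark for later use.
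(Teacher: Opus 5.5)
Your proposal is correct and is essentially the paper's argument. The paper's local chart $\Psi(a,p)=(a,\exp_p(S_{a_0}(a)(p)))$ on $U_{a_0}\times V_{p_0}$ is your map $\Phi(a,x)=(a,F_a(x))$, with $F_a$ the normal-graph parametrization of $\Sigma_a$ over $\Sigma_{a_0}$. The paper proves injectivity of $d\Psi$ by the same triangular-differential argument and gets compactness from closedness, as you do.

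The one difference is that the paper works locally: it uses the section map $S_{a_0}$ and the explicit inverse $(b,q)\mapsto(b,n_{a_0}(q))$ rather than compactness. This avoids needing a global $C^1$ choice of parametrizations $F_a$ over all of $A$. If (i) is read as giving only unparametrized surfaces, run your second route over $U_{a_0}\times RP^2$ instead, which is exactly the paper's proof.
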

\begin{proof}
	By the definition of Zoll family, for any $a_0\in A$, there exists a small neighborhood $U_{a_0}\subset A$ of $a_0$ and a $C^1$-map 
	\begin{align}\label{Eq: section map}
		S_{a_0}: U_{a_0}\to \Gamma^{3,\alpha}(N\Sigma_{a_0})
	\end{align}
	so that $\Sigma_a= \exp_{\Sigma_{a_0}}^\perp(S_{a_0}(a))$ for any $a\in U_{a_0}$. 
	For any $p_0\in \Sigma_{a_0}$, there is a neighborhood $V_{p_0}\subset \Sigma_{a_0}$ so that $\Sigma_{a_0}\llcorner V_{p_0}$ admits a unit normal $\nu_{\Sigma_{a_0}}$. 
	Then, let 
	\[ E: U_{a_0}\times V_{p_0}\to M,\qquad E(a,p):=\exp_p\left(S_{a_0}(a)(p)\right),  \]
	and define 
	\begin{align}\label{Eq: local chart for incidence manifold}
		\Psi: U_{a_0}\times V_{p_0}\to A\times M, \qquad \Psi(a,p):= (a,E(a,p)) \in \mc I.
	\end{align}
	Hence, for any $(b,q)\in \Psi(U_{a_0}\times V_{p_0})$, we have $\Psi^{-1}(b,q) = (b, n_{a_0}(q)) \in U_{a_0}\times V_{p_0}$, where $n_{a_0}$ is the $g_M$-geodesic nearest projection to $\Sigma_{a_0}$ defined near $\Sigma_{a_0}$. 
	In particular, $\mc I$ is a $C^0$-manifold with local charts given in the form of $\Psi$. 
	
	Since the section map $S_{a_0}$ in \eqref{Eq: section map} is $C^1$, we see that $(a,p)\in U_{a_0}\times V_{p_0}\mapsto S_{a_0}(a)(p)\in N\Sigma_{a_0}$ is a $C^1$ map. 
	Hence, the above $E$ and $\Psi$ are both $C^1$ maps. 
	Additionally, for any $(a,p)\in U_{a_0}\times V_{p_0}$ and $(\dot a,\dot p)\in T_aU_{a_0}\times T_pV_{p_0}$, the tangent map $d\Psi$ at $(a,p)$ is given by
	\[d\Psi_{(a,p)}(\dot a, \dot p) = \left( \dot a, ~ D_aE_{(a,p)}(\dot a) + D_pE_{(a,p)}(\dot p)  \right) \in T_aA\times T_{E(a,p)}M. \]
	We claim that $d\Psi_{(a,p)}$ is injective with rank $5$. 
	Indeed, if $d\Psi_{(a,p)}(\dot a, \dot p) =0$ for some $(\dot a,\dot p)\in T_aU_{a_0}\times T_pV_{p_0}$, then we have $\dot a=0$, and thus $D_pE_{(a,p)}(\dot p)=0$. 
	Note that $E_a:=\exp_{\Sigma_{a_0}}^\perp(S_{a_0}(a)(\cdot)) : \Sigma_{a_0}\to \Sigma_a$ is a $C^{3,\alpha}$-diffeomorphism that coincides with $E(a,\cdot )$ on $V_{p_0}$. 
	Hence, $(DE_a)_p(\dot p)=D_pE_{(a,p)}(\dot p)=0$ indicates that $\dot p=0$, which shows $d\Psi_{(a,p)}$ is injective with ${\rm rank}(d\Psi_{(a,p)}) = \dim (U_{a_0}\times V_{p_0})=5$. 
	
	Therefore, by slightly shrinking $U_{a_0}$ and $V_{p_0}$, we see that $\Psi: U_{a_0}\times V_{p_0}\to A\times M$ is a $C^1$-embedding with its image lying in $\mc I$. 
	In particular, $\mc I$ admits a $C^1$-manifold structure with local charts given as $\Psi$. 
	Since $a\mapsto\Sigma_a$ is continuous, $\mc I\subset A\times M$ is closed and compact. 
\end{proof}

Given $a\in A$ and $\dot a\in T_aA$, let $a(t)\subset A$ be a smooth curve with $a(0)=a$ and $a'(0)=\dot a$. 
Then, using the section map $S_a$ in \eqref{Eq: section map} defined near $a\in A$, $\Sigma_{a(t)} = \exp_{\Sigma_a}^\perp(S_a(a(t)))$ is a variation of minimal surfaces, whose normal variation vector field on $\Sigma_a$ is given by 
\begin{align}\label{Eq: normal variation field associated with a}
	\mc V_a(\dot a):= \left.\frac{d}{dt}\right|_{t=0} \exp_{\Sigma_a}^\perp \left( S_a(a(t)) \right) =  \left(d\exp_{\Sigma_a}^\perp\right)_0\circ (dS_a)_a (\dot a) ~\in~ \Gamma^{3,\alpha}(N\Sigma_a).
\end{align}
Since $\Sigma_{a(t)}$ has mean curvature $H_{\Sigma_{a(t)}}\equiv 0$, we have $\frac{d}{dt}|_{t=0} H_{\Sigma_{a(t)}} =0$, which implies that $\mc V_a(\dot a)$ is a Jacobi field over $\Sigma_a$. 
Hence,
\begin{align}\label{Eq: normal variation field is Jacobi}
	\mc V_a : T_a A \to \ker(L_{\Sigma_a}):=\{X\in \Gamma^{3,\alpha}(N\Sigma_a): L_{\Sigma_a}X=0\}
\end{align}
is a linear map into the $3$-dimensional linear space of Jacobi fields over $\Sigma_a$ (cf. Proposition \ref{Prop: Zoll family}). 

\begin{remark}\label{Rem: isomorphism of Jacobi map}
	Note that $\mc V_a$ in \eqref{Eq: normal variation field is Jacobi} may not be an isomorphism in general. 
	This is because the parameterization of the Zoll family is not unique. Namely, for any $C^1$ bijection $\phi: \mb {RP}^3\to \mb {RP}^3$, $\{\Sigma_{\phi(a)}\}_{a\in \mb{RP}^3}$ is also a Zoll family of minimal $RP^2$, while $d\phi$ could be degenerate so that $\mc V_a$ fails to be injective. 
	Nevertheless, by the proof of Theorem \ref{main Thm: zoll metric on RP3} and Theorem \ref{Thm: rigidity}, the Zoll family always admits {\em nondegenerate} parameterizations so that 
	\[\mbox{$\mc V_a$ is an isomorphism for any $a\in A$}.\]
	Specifically, by considering the $\mb Z_2$-quotients of the objects in the proof of Theorem \ref{Thm: rigidity}, there exists a set $\mc G$ of embedded minimal $RP^2$ in $RP^3$ so that $\mc G$ admits a $C^\infty$-manifold structure with local charts given by $\Phi_\Sigma: \mc V\in\mc W\subset \ker(L_\Sigma)\mapsto \exp_{\Sigma}^\perp(\varphi_\Sigma(\mc V))\in \mc G$, where $\Sigma\in \mc G$, $\mc W$ is a small neighborhood of $0\in \ker(L_\Sigma)$, and $\varphi_\Sigma: \mc W\to \Gamma^{3,\alpha}(N\Sigma)$ satisfies $\varphi_\Sigma(0)=0$ and $(d\varphi_\Sigma)_0=id$. 
	Additionally, $\mc G$ has a component $Z$ that is diffeomorphic to $\mb {RP}^3=A$ through a map $\iota$ (cf. {\bf Step 5} in Theorem \ref{Thm: rigidity}). 
	Then, 
	$\{\Sigma_a:=\iota(a)\}_{a\in A}=Z$ is the desired parameterization of the Zoll family since $S_a=(\exp_{\iota(a)}^\perp)^{-1}\circ\iota=\varphi_{\iota(a)}\circ \Phi_{\iota(a)}^{-1}\circ \iota$ has nondegenerate tangent map. 
\end{remark}

In the rest of this section, we always assume that $\mc V_a$ is an isomorphism for all $a\in A$ by Remark \ref{Rem: isomorphism of Jacobi map}. 
Using $\mc V_a$, we can characterize the tangent space of the incidence manifold $\mc I$. 

\begin{lemma}\label{Lem: tangent space of incidence manifold}
	Given $(a,p)\in \mc I$, the tangent space of $\mc I$ at $(a,p)$ is given by 
	\[ T_{(a,p)}\mc I =\left\{ (\dot a, v)\in T_aA\times T_pM : v^\perp_{a} = \mc V_a(\dot a)(p)   \right\} ,\]
	where $v^\perp_a$ is the normal component of $v$ with respect to $\Sigma_a$.
\end{lemma}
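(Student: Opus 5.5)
The plan is to compute the image of $d\Psi_{(a,p)}$ using the local chart $\Psi$ from the proof of Lemma \ref{Lem: C1-manifold structure of incidence}, centred at $a_0=a$. By that lemma, $\mc I$ is a $5$-dimensional $C^1$ submanifold. The right-hand side
\[ \mc L:=\{ (\dot a, v)\in T_aA\times T_pM : v^\perp_{a} = \mc V_a(\dot a)(p) \} \]
is also $5$-dimensional. It is the preimage of $0$ under the linear surjection $(\dot a,v)\mapsto v^\perp_a-\mc V_a(\dot a)(p)$ onto the $1$-dimensional normal line $N_p\Sigma_a$, inside the $6$-dimensional space $T_aA\times T_pM$. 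So it will suffice to prove the inclusion $T_{(a,p)}\mc I\subset\mc L$; equality then follows by dimension count.

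For the inclusion, take $a_0=a$ and a point $p_0=p\in\Sigma_a$. Since $\Sigma_{a_0}=\exp^\perp_{\Sigma_{a_0}}(0)$, we have $S_{a}(a)=0$, and hence $E(a,\cdot)={\rm id}$ on $V_p$. At the point $(a,p)$ the chart gives
\[ d\Psi_{(a,p)}(\dot a,\dot p)=(\dot a,\ D_aE_{(a,p)}(\dot a)+\dot p), \]
where $\dot p\in T_p\Sigma_a$ is tangent and contributes nothing normal. For the first term, write $E(a',p)=\exp_p(S_a(a')(p))$. Differentiating along a curve $a(t)$ with $a(0)=a$, $a'(0)=\dot a$, and using $S_a(a)(p)=0$ together with $(d\exp_p)_0={\rm id}$, gives
\[ D_aE_{(a,p)}(\dot a)=(dS_a)_a(\dot a)(p). \]
This is a normal vector at $p$. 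By the definition \eqref{Eq: normal variation field associated with a} (again with $(d\exp^\perp_{\Sigma_a})_0$ acting as the identity on the zero section's normal fibres), it equals $\mc V_a(\dot a)(p)$. Therefore every tangent vector $(\dot a, v)$ has $v=\mc V_a(\dot a)(p)+\dot p$, whose normal component with respect to $\Sigma_a$ is $\mc V_a(\dot a)(p)$. This proves $T_{(a,p)}\mc I\subset\mc L$.

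The main point needing care is the chain rule for $a'\mapsto S_a(a')(p)$. Evaluation at $p$ is a bounded linear map $\Gamma^{3,\alpha}(N\Sigma_a)\to N_p\Sigma_a$, so the composition is $C^1$ with derivative $(dS_a)_a(\dot a)(p)$. One must also identify $(d\exp^\perp_{\Sigma_a})_0$ applied to a section with that section itself. Both facts are routine, but they are where the $C^1$ regularity from Theorem \ref{main Thm: zoll metric on RP3}(i) enters.

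Note that the argument does not use the nondegeneracy of $\mc V_a$.
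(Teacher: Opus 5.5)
Your proof is correct and is essentially the paper's argument. The paper writes an arbitrary curve in $\mathcal I$ as $p(t)=\exp^\perp_{\Sigma_a}\big(S_a(a(t))(p_a(t))\big)$ with $p_a=n_a\circ p$, which is your chart $\Psi$ read in the coordinates $(a(t),p_a(t))$, and differentiates at the zero section to get $v=\mathcal V_a(\dot a)(p)+p_a'(0)$. The only difference is the reverse inclusion: the paper builds a curve of this same form with prescribed velocity $(\dot a,v)$, whereas you obtain it from the dimension count $\dim T_{(a,p)}\mathcal I=5=\dim\ker$ of your surjection onto $N_p\Sigma_a$, which is equally valid.
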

\begin{proof}
	Let $(a(t),p(t))$ be any curve in $\mc I$ with $(a(0),p(0))=(a,p)$ and $(\dot a,v):=(a'(0),p'(0))\in T_aA\times T_pM$. 
	Denote by $p_a(t):= n_a(p(t))$ the geodesic nearest projection of $p(t)$ to $\Sigma_a$. 
	Then, 
	\begin{align}\label{Eq: parameterization of p}
		p(t) = \exp_{\Sigma_a}^\perp \big( S_a(a(t)) \left( p_a(t) \right) \big),
	\end{align}
	where $S_a$ is the section map in \eqref{Eq: section map} defined near $a\in A$. 
	Since $\exp_{\Sigma_a}^\perp(S_a(a))=\Sigma_a$, we see that $S_a(a(0))=S_a(a)$ is the zero section of $N\Sigma_a$. 
	Combined with $(d\exp_{\Sigma_a}^\perp)_0=id$, it follows from the chain rule that 
	\begin{align}\label{Eq: derivative of p}
		v=p'(0)=\mc V_a(\dot a) (p) + p_a'(0).
	\end{align} 
	Hence, $v^\perp_a=\mc V_a(\dot a) (p)$ since $p_a'(0)\in T_p\Sigma_a$. 
	
	Conversely, for any $(\dot a, v)\in T_aA\times T_pM $ with $ v^\perp_{a} = \mc V_a(\dot a)(p)$, we can take a curve $p_a(t)$ on $\Sigma_a$ with $p_a(0)=p$ and $p_a'(0)=v-v^\perp_{a}$. 
	Then, for any curve $a(t)\subset A$ with $a(0)=a$ and $a'(0)=\dot a$, we can define $p(t):=\exp_{\Sigma_a}^\perp ( S_a(a(t)) \left( p_a(t) \right) )$ so that $p'(0)=v$. 
	This proves the lemma. 
\end{proof}

Next, we show that $\mc I$ is $C^1$-diffeomorphic to $G_1(M)$. 
\begin{lemma}\label{Lem: C1 diffeomorphism}
	Let $\mc T: \mc I \to G_1(M)$ be defined by 
	\[ \mc T (a,p) := (p,(T_p\Sigma_a)^\perp) = (p,N_p\Sigma_a) = (p, [\nu_{\Sigma_a}(p)]), \qquad \forall (a,p)\in \mc I ,\]
	where $\nu_{\Sigma_a}(p)$ is a unit normal of $\Sigma_a$ at $p$, and $[\nu_{\Sigma_a}(p)]:=\{s\nu_{\Sigma_a}(p) : s\in \mb R\}$ denotes the un-oriented normal line. 
	Then, $\mc T$ is a $C^1$-diffeomorphism. 
\end{lemma}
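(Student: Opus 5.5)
The plan has three parts. First, $\mc T$ is a bijection: this is exactly the Zoll property (ii). A normal line at $p$ determines the plane $P=N^\perp$, and there is a unique $a$ with $p\in\Sigma_a$ and $T_p\Sigma_a=P$. Since $\mc I$ is compact (Lemma \ref{Lem: C1-manifold structure of incidence}), $G_1(M)$ is Hausdorff, and $\mc T$ is continuous, $\mc T$ is a homeomorphism. Both spaces are $5$-dimensional. By the inverse function theorem it therefore suffices to show that $\mc T$ is $C^1$ and that $d\mc T_{(a,p)}$ is injective at every point.

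\emph{Regularity.} I would work in the local chart $\Psi(a,p)=(a,E(a,p))$ from \eqref{Eq: local chart for incidence manifold}. The composition $\mc T\circ\Psi(a,p)=(E(a,p),\,[\nu_{\Sigma_a}(E(a,p))])$ involves the unit normal of the graph $\exp^\perp_{\Sigma_{a_0}}(S_{a_0}(a))$ at the point $E(a,p)$. This normal is a smooth expression in $S_{a_0}(a)(p)$ and its first covariant derivative in $p$. Since $a\mapsto S_{a_0}(a)\in\Gamma^{3,\alpha}$ is $C^1$, the map $(a,p)\mapsto (S_{a_0}(a)(p),\nabla S_{a_0}(a)(p))$ is jointly $C^1$, so $\mc T$ is $C^1$.

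\emph{Injectivity of $d\mc T$.} I take $(\dot a,v)\in T_{(a,p)}\mc I$ with $d\mc T(\dot a,v)=0$. The base component gives $v=0$. By Lemma \ref{Lem: tangent space of incidence manifold}, $\mc V_a(\dot a)(p)=v^\perp_a=0$. Write $\mc V_a(\dot a)=u\,\nu_{\Sigma_a}$ locally, with $u$ a Jacobi function. The curve $t\mapsto (a(t),p(t))$ with $p(t)$ the corresponding point can be chosen tangent to $(\dot a,0)$; more concretely, I use the curve $p(t)=E(a(t),p)$ corrected by a tangential curve as in the proof of Lemma \ref{Lem: tangent space of incidence manifold}. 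The first variation of the normal line along it equals, up to terms proportional to $v=0$, the quantity $-\nabla^{\Sigma_a}u(p)$ (the standard formula $\dot\nu=-\nabla u+$ terms involving $u(p)$ and $A(v^T)$). So the fiber component vanishes iff $\nabla u(p)=0$. Hence $d\mc T$ annihilates $(\dot a,0)$ iff the Jacobi field $u=\mc V_a(\dot a)$ satisfies $u(p)=0$ and $\nabla u(p)=0$.

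\emph{Main obstacle.} The hardest step is showing that a nonzero Jacobi field on $\Sigma_a$ in the Zoll family cannot vanish to second order at a point. Since $\mc V_a$ is an isomorphism (nondegenerate parameterization, Remark \ref{Rem: isomorphism of Jacobi map}), $u\neq0$ whenever $\dot a\neq0$. My first idea is the Zoll uniqueness itself. If $u(p)=0$ and $\nabla u(p)=0$, then the nearby surfaces $\Sigma_{a(t)}$ pass within $o(t)$ of $p$ with tangent planes $o(t)$-close to $T_p\Sigma_a$. Using the plane-wise uniqueness together with the fact that the inverse of the continuous bijection $\mc T$ is continuous, I would try to upgrade this to a contradiction via a degree/transversality argument.

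A cleaner route works on the lift $\wti\Sigma_a\cong S^2$. There $u$ is a second eigenfunction of $L_{\wti\Sigma_a}$ with eigenvalue $0$ (Proposition \ref{Prop: 2nd eigenfunctions}, Proposition \ref{Prop: Zoll family}). Cheng's theorem \cite{cheng1976eigenfunctions} says that a second eigenfunction on a sphere has nodal set a single simple closed curve on which $\nabla u\neq0$. So $u(p)=0$ forces $\nabla u(p)\neq0$, a contradiction. Therefore $d\mc T$ is injective, hence an isomorphism, and $\mc T$ is a $C^1$-diffeomorphism.
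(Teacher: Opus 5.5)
Your proposal is correct and follows the paper's proof essentially step for step: bijectivity from the Zoll property plus compactness, then reduction via the inverse function theorem to injectivity of $d\mc T$, then the first-variation-of-normal computation showing that $d\mc T(\dot a,v)=0$ forces $v=0$, $u(p)=0$ and $\nabla^{\Sigma_a}u(p)=0$, and finally Cheng's nodal-set theorem for the lifted second eigenfunction on $\wti\Sigma_a$, combined with the nondegenerate parameterization, to conclude $\dot a=0$. The degree/transversality ``first idea'' is undeveloped and unnecessary, since the Cheng route you call cleaner is exactly the argument the paper uses.
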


\begin{proof}
	Since $\{\Sigma_a\}_{a\in A}$ is a Zoll family, one easily verifies that $\mc T$ is $C^1$ and is a bijection. 
	Combined with the compactness of $\mc I$, we conclude that $\mc T$ is a homeomorphism (cf. \cite{ambrozio2024rigidity}*{Proof of Theorem 6.1}). 
	Hence, by the inverse function theorem, it is sufficient to show that the tangent map $d\mc T$ is invertible at any $(a,p)\in \mc I$. 
	
	Given any $(\dot a, v)\in T_{(a,p)}\mc I$, let $(a(t), p(t))$ be the curve in $\mc I$ with $(a(0),p(0))=(a,p)$ and $(a'(0), p'(0))=(\dot a, v)$. 
	By the proof of Lemma \ref{Lem: C1-manifold structure of incidence} and \eqref{Eq: parameterization of p}, we can locally choose the unit normals in a $C^1$-manner so that 
	\[\nu: (b,q)\in U_a\times V_p\mapsto \nu_{\Sigma_b}(\exp_{\Sigma_a}^\perp(S_a(b)(q)))\in TM\] 
	is a $C^1$-map, where $U_a$ and $V_p$ are small neighborhoods of $a\in A$ and $p\in \Sigma_a$ respectively. 
	Let
	\begin{itemize}
		\item $\nu_t:=\nu(a(t),\cdot): V_p \to TM$;
		\item $p_a(t):=n_a(p(t))$ be the nearest projection of $p(t)$ to $\Sigma_a$;
		\item $F: (t,q)\in (-\epsilon,\epsilon)\times V_p \mapsto \exp_{\Sigma_a}^\perp (S_a(a(t))(q))\in  M$, and $F_t:=F(t,\cdot)$.
	\end{itemize}
	Therefore, $\mc T(a(t),p(t))= \big( p(t), [\nu_t(p_a(t))] \big)$. 
	
	 Suppose $d\mc T_{(a,p)}(\dot a, v)=0$. 
	 Then, 
	 \[p'(0)=0\qquad{\rm and}\qquad \left.\frac{D}{dt}\right|_{t=0} \nu_t(p_a(t))=0. \] 
	 Firstly, combined with the computation in \eqref{Eq: derivative of p}, we have 
	 \[ 0=p'(0)=v=\mc V_a(\dot a)(p) + p_a'(0), \]
	 where $\mc V_a: T_aA\to \ker(L_{\Sigma_a})$ is given by \eqref{Eq: normal variation field associated with a}\eqref{Eq: normal variation field is Jacobi}. 
	 Since $\mc V_a(\dot a)(p)\in N_p\Sigma_a$ and $p_a'(0)\in T_p\Sigma_a$, we see that
	 \begin{align}\label{Eq: vanish derivative of projection pa}
	 	\mc V_a(\dot a)(p) = 0 \qquad {\rm and}\qquad p_a'(0)=0. 
	 \end{align}
	 In particular, in the neighborhood of $p$ in $\Sigma_a$, the function 
	 \begin{align}\label{Eq: local Jacobi function}
	 	u:= \langle \mc V_a(\dot a) , \nu_0\rangle 
	 \end{align}
	 satisfies $u(p) = 0$.
	 
	 Next, by the chain rule and $p_a'(0)=0$, we have 
	 \begin{align}\label{Eq: vanish derivative of unit normal}
	 	0=\left.\frac{D}{dt}\right|_{t=0} \nu_t(p_a(t))= \left.\frac{D}{dt}\right|_{t=0}\nu_t(p) + \nabla^M_{p_a'(0) } \nu_0 = \left.\frac{D}{dt}\right|_{t=0}\nu_t(p) .
	 \end{align}
	 Take any tangent vector field $X$ on $\Sigma_a$ near $p$, and define $X_t:= dF_t (X)$ to be the tangent field on $\Sigma_{a(t)}$. 
	 Since $\nu_t=\nu_{\Sigma_{a(t)}}(F_t(\cdot))$ is a local unit normal of $\Sigma_{a(t)}$, we have that 
	 \[ |\nu_t|\equiv 1 \qquad {\rm and}\qquad \langle \nu_t, X_t\rangle \equiv 0 .\]
	 Therefore, $\langle \left.\frac{D}{dt}\right|_{t=0}\nu_t(p), \nu_t(p)\rangle= 0$ and 
	 \begin{align*}
	 	\left\langle \left.\frac{D}{dt}\right|_{t=0}\nu_t, X \right\rangle 
	 	&= - \left \langle \nu_0, \left.\frac{D}{dt}\right|_{t=0} X_t  \right\rangle 
	 	= -\left\langle \nu_0,  \nabla^M_{\mc V_a(\dot a)} dF_t(X)  \right\rangle
	 	\\
	 	&= -\left\langle \nu_0,  \nabla^M_{dF_0(X)} \mc V_a(\dot a)  \right\rangle
	 	\\
	 	&= -\left\langle \nu_0,  \nabla^M_{X} (u\cdot \nu_0)  \right\rangle 
	 	\\
	 	&= -\langle \nabla^{\Sigma_a} u , X\rangle - u\cdot \langle \nu_0,\nabla^M_X\nu_0\rangle
	 	\\&= -\langle \nabla^{\Sigma_a} u , X\rangle,
	 \end{align*}
	 where $u$ is defined in \eqref{Eq: local Jacobi function}, and $X(|\nu_0|^2) = 0$ is used in the last equality. 
	 Since $X(p)\in T_p\Sigma_a$ is arbitrary, we see from \eqref{Eq: vanish derivative of unit normal} that $\nabla^{\Sigma_a} u(p)=0$, i.e. $p$ is a critical point of the function $u$. 
	 
	 Recall that $u(p)=0$ and $\nabla^{\Sigma_a} u(p)=0$. 
	 Therefore, after lifting the Jacobi field $\mc V_a(\dot a)$ of $\Sigma_a$ to the Jacobi field $\wti{\mc V}$ of the minimal $2$-sphere $\wti\Sigma_a:=\pi^{-1}(\Sigma_a)$ in $(S^3,g_{S^3})$, we see that the Jacobi function $\tilde u:= \langle \wti{\mc V} ,\tilde\nu\rangle$ on $\wti\Sigma_a$ is the second eigenfunction of $L_{\wti\Sigma_a}$ (cf. Proposition \ref{Prop: Zoll family}) and has critical points $\pi^{-1}(p)$ on its nodal set $\{x\in\wti\Sigma_a: \tilde u(x)=0\}$, where $\tilde \nu$ is a global unit normal of $\wti\Sigma_a$. 
	 By Cheng \cite{cheng1976eigenfunctions}, the eigenfunction $\tilde u$ must be the constant function $0$, which implies that $\mc V_a(\dot a)=0\in \ker(L_{\Sigma_a})$. 
	 Since $\mc V_a$ is an isomorphism by our choice of the parameterization (Remark \ref{Rem: isomorphism of Jacobi map}), we have $\dot a =0$. 
	 
	 Together, we conclude that $d\mc T_{(a,p)}$ is injective, which must be an isomorphism since $\dim(\mc I) = 5 = \dim(G_1(M))$. 
	 Then, the desired result follows from the inverse function theorem. 
\end{proof}

\subsection{Weighted Crofton formula}

Take any Riemannian metric $g_A$ on the parameter space $A=\mb {RP}^3$. We then define a metric $g_{\mc I}$ on the incidence manifold $\mc I$ by 
\begin{align}\label{Eq: metric on incidence manifold}
	g_{\mc I} \big( (\dot a_1, v_1), (\dot a_2, v_2)  \big) := g_A\big(\dot a_1, \dot a_2\big) + g_M\big( v_1-\mc V_a(\dot a_1)(p), v_2-\mc V_a(\dot a_2)(p) \big)
\end{align}
for any $(\dot a_1, v_1), (\dot a_2, v_2)\in T_{(a,p)}\mc I$. 
Intuitively, $g_{\mc I}|_{(a,p)}=g_A|_{a}+(g_{M}\llcorner \Sigma_a) |_{p}$ by Lemma \ref{Lem: tangent space of incidence manifold}. 
Hence, for any continuous function $f\in C^0(\mc I)$, 
\begin{align}\label{Eq: dV_I = dV_A dV_Sigma}
	\int_{\mc I} f(a,p) ~d\vol_{\mc I}(a,p) = \int_{A} \int_{\Sigma_a} f(a,p) ~d\vol_{\Sigma_a}(p) ~d\vol_A(a), 
\end{align}
where $d\vol_{\mc I}$, $d\vol_{\Sigma_a}$, and $d\vol_{A}$ denote the Riemannian densities (cf. \cite{lee2013introduction}*{P.427 and Proposition 16.45}) induced by the metrics $g_{\mc I}$, $g_{\Sigma_a}:=g_M\llcorner\Sigma_a$, and $g_A$ respectively.

\medskip
Meanwhile, for any $p\in M$, the fiber $G_1(T_pM)$ of the Grassmannian bundle $G_1(M)$ is identified with $S_pM/(v\sim -v)$, where 
\[S_pM:=\{v\in T_pM: |v|=1\}\] 
is the unit tangent sphere. 
Hence, the metric $g_M|_p$ induces a round metric $g_{S_pM}:=\frac{1}{2\pi} (g_M|_p) \llcorner S_pM$  on $S_pM$ with $2= \Area_{g_{S_pM}}(S_pM)= \Area_{(g_M|_p)\llcorner S_pM}(S_pM)/(2\pi)$, which further induces 
\begin{align}\label{Eq: round metric on fiber G_1}
	\mbox{a round metric $g_{G_1(T_pM)}$ on $G_1(T_pM)=S_pM/(v\sim -v)\cong \mb {RP}^2$}
\end{align} 
so that the quotient map is a local isometry. 
Hence, $\Area_{g_{G_1(T_pM)}}(G_1(T_pM))=1$, and we denote by $\omega_p$ the Riemannian density (cf. \cite{lee2013introduction}*{P.427 and Proposition 16.45}) on $G_1(T_pM)$ induced by the round metric $g_{G_1(T_pM)}$. 

Next, we define a functional $I: C^0(G_1(M))\to \mb R$ by 
\begin{align}\label{Eq: positive linear functional}
	I(f) := \frac{1}{\Vol(M)} \int_M \int_{G_1(T_pM)} f(p,[v]) ~d\omega_p([v])~d\vol_M(p),
\end{align} 
where $\Vol(M)$ is the volume of $M$ under the metric $g_M$, and $d\vol_M$ is the Riemannian density on $M$ induced by $g_M$ (cf. \cite{lee2013introduction}*{Proposition 16.45}). 
It is clear that $I$ is a positive linear functional on $C^0(G_1(M))$ and $G_1(M)$ is a compact manifold. 
Hence, by Riesz-Markov's representation theorem, there exists a positive Borel measure $\mu_{G_1(M)}$ so that 
\begin{align}\label{Eq: probability measure on G1(M)}
	I(f)=\int_{G_1(M)} f ~d\mu_{G_1(M)} \qquad\forall f\in C^0(G_1(M)),
\end{align}
and thus $\mu_{G_1(M)}(G_1(M))=1$.

\medskip
Using the $C^1$-diffeomorphism $\mc T$, we can pull-back the measure $\mu_{G_1(M)}$ to a normalized measure on $\mc I$:
\begin{align}\label{Eq: pull-back probability measure on I}
	\mu_{\mc I}:= \mc T^*\mu_{G_1(M)}. 
\end{align}
The following lemma shows that $\mu_{\mc I}$ has continuous positive density with respect to $\vol_{\mc I}$. 

\begin{lemma}\label{Lem: measure derivative}
	There exists a continuous positive function $\Theta$ on $\mc I$ so that 
	\[ \int_{\mc I} f~d\mu_{\mc I} = \int_{\mc I} f\cdot \Theta ~d\vol_{\mc I},\qquad\forall f\in C^0(\mc I). \]
	In particular, $\mu_{\mc I}$ is absolutely continuous with respect to the measure $\vol_{\mc I}$, and $\Theta$ is the associated Radon-Nikodym derivative.
\end{lemma}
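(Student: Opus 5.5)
The plan is to express $\mu_{G_1(M)}$ as a smooth positive density on $G_1(M)$ and then pull it back through the $C^1$-diffeomorphism $\mc T$ of Lemma \ref{Lem: C1 diffeomorphism} using the change of variables formula for densities.

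First, I will identify $\mu_{G_1(M)}$ as a Riemannian density. Equip $G_1(M)$ with the Sasaki-type metric $g_{G_1(M)}$ built from the Levi-Civita connection of $g_M$: its horizontal part is isometric to $g_M$ via $d\pi_{G_1}$, and its vertical part restricts to $g_{G_1(T_pM)}$ on each fiber. With horizontal and vertical spaces orthogonal, Fubini for Riemannian submersions gives
\[\int_{G_1(M)} f\, d\vol_{G_1(M)} = \int_M\int_{G_1(T_pM)} f\, d\omega_p\, d\vol_M(p).\]
By \eqref{Eq: positive linear functional} and \eqref{Eq: probability measure on G1(M)}, this yields $d\mu_{G_1(M)}=\Vol(M)^{-1}\,d\vol_{G_1(M)}$, which is a smooth, everywhere positive density.

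Second, I will pull back. Since $\mc T$ is a $C^1$-diffeomorphism between compact $5$-manifolds, the change of variables formula for densities (cf.\ \cite{lee2013introduction}, Proposition 16.45) gives, for $f\in C^0(\mc I)$,
\[\int_{\mc I} f\, d\mu_{\mc I}=\int_{G_1(M)} f\circ\mc T^{-1}\, d\mu_{G_1(M)}=\int_{\mc I} f\cdot \frac{|\det d\mc T_{(a,p)}|}{\Vol(M)}\, d\vol_{\mc I}.\]
Here $|\det d\mc T_{(a,p)}|$ is the Jacobian computed with respect to orthonormal bases of $(T_{(a,p)}\mc I,g_{\mc I})$ and of $T_{\mc T(a,p)}G_1(M)$. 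I will therefore set $\Theta:=|\det d\mc T|/\Vol(M)$. The ``in particular'' statement is immediate, since $d\mu_{\mc I}=\Theta\, d\vol_{\mc I}$.

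Third, I will check continuity and positivity. Continuity holds because $\mc T$ is $C^1$, the metric $g_{G_1(M)}$ is smooth, and the metric $g_{\mc I}$ from \eqref{Eq: metric on incidence manifold} is continuous. For the last point, note that $\mc V_a(\dot a)(p)$ depends continuously on $(a,p)$, because $S_a$ is $C^1$ into $C^{3,\alpha}$ sections; one can check this locally in the charts $\Psi$ of Lemma \ref{Lem: C1-manifold structure of incidence}. Positivity follows from the invertibility of $d\mc T$ at every point, which was established in the proof of Lemma \ref{Lem: C1 diffeomorphism} using Cheng's nodal result together with the nondegenerate parameterization.

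The main technical point is the first step, namely making the Fubini identification of $\omega_p\,d\vol_M$ with a global smooth density on $G_1(M)$. An alternative that avoids constructing the Sasaki metric is to argue locally. In a local orthonormal frame of $M$, trivialize $G_1(M)\cong U\times \mb{RP}^2$; then $d\omega_p\,d\vol_M$ becomes a product density with a smooth positive coefficient, and the pulled-back density is $|\det|$ of the $C^1$ matrix of $d\mc T$ in charts. Such a local density is positive because $d\mc T$ is invertible. Patching these local expressions with a partition of unity then gives a globally well-defined continuous $\Theta$.
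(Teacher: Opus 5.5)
Your proposal is correct and follows essentially the paper's route. In both arguments, $\mu_{G_1(M)}$ is realized as a continuous positive density on $G_1(M)$ and transferred through the $C^1$-diffeomorphism $\mc T$, with positivity resting on the invertibility of $d\mc T$ from Lemma~\ref{Lem: C1 diffeomorphism}. The difference is only bookkeeping: the paper pushes $g_{\mc I}$ forward to $G_1(M)$ so that $\mc T$ becomes an isometry, then reads off $\theta=h_x(y)H(x)/(\Vol(M)\,G(x,y))$ in local bundle coordinates (precisely your ``alternative''), whereas your Sasaki-type metric makes $\mu_{G_1(M)}$ exactly $\Vol(M)^{-1}d\vol_{G_1(M)}$ and puts all the distortion into $|\det d\mc T|$.
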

\begin{remark}\label{Rem: density and volume form}
	Since $d\vol_{\mc I}$ is a Riemannian density on $\mc I$ in the sense of \cite{lee2013introduction}*{Proposition 16.45}, the above lemma indicates that $d\mu_{\mc I}$ and $d\mu_{G_1(M)}$ are densities on $\mc I$ and $G_1(M)$ respectively in the sense of \cite{lee2013introduction}*{P. 428-432}. 
\end{remark}

\begin{proof}
	By Lemma \ref{Lem: C1 diffeomorphism}, we can define the metric $g_{G_1(M)}:= (\mc T^{-1})^* g_{\mc I}$ on $G_1(M)$ with induced volume measure $\vol_{G_1(M)}$. 
	Hence, it is equivalent to show that $d\mu_{G_1(M)}=\theta \cdot d\vol_{G_1(M)}$ for some continuous function $\theta$ on $G_1(M)$. 
	
	Using the bundle structure of $G_1(M)$, we can take $x=(x_1,x_2,x_3)$ and $y=(y_1,y_2)$ as the local coordinates near $p_0\in M=RP^3$ and $[v_0]\in G_1(T_{p_0}M)\cong \mb {RP}^2$ respectively so that the submersion $G_1(M)\to M$ is expressed by $(x_1,x_2,x_3,y_1,y_2)\mapsto (x_1,x_2,x_3)$ in these coordinates. 
	Then, by the constructions, we can write the Riemannian density in these coordinates by 
	\[ d\omega_p= h_{x}(y)|dy_1dy_2|,\quad d\vol_M= H(x)|dx_1dx_2dx_3|, \quad d\vol_{G_1(M)}= G(x,y)|dx_1 dx_2 dx_3 dy_1 dy_2| \]
	for some continuous positive functions $h$, $H$, and $G$, where $|\cdot |$ denotes the density induced by the volume form (cf. \cite{lee2013introduction}*{Proposition 16.35(c)}). 
	Combined with \eqref{Eq: positive linear functional} and \eqref{Eq: probability measure on G1(M)}, we conclude that for any Borel set $U'$ in this local coordinate neighborhood, $\mu_{G_1(M)}(U')=0$ if and only if the image of $U'$ under the coordinate map is a Lebesgue null set, which is also equivalent to $\vol_{G_1(M)}(U')=0$. 
	Since the local chart exists everywhere and $G_1(M)$ is compact, the measure $\mu_{G_1(M)}$ is absolutely continuous with respect to the volume measure $\vol_{G_1(M)}$. 
	
	Let the non-negative function $\theta$ be the Radon-Nikodym derivative associated with $\mu_{G_1(M)} \ll \vol_{G_1(M)}$. 
	Then, by \eqref{Eq: positive linear functional} and \eqref{Eq: probability measure on G1(M)}, $\theta$ is expressed by $h_x(y) H(x)/(\Vol(M)\cdot G(x,y))$ in the above local coordinates, which implies that $\theta$ is positive and continuous. 
	Using the $C^1$-diffeomorphism $\mc T$ (cf. Lemma \ref{Lem: C1 diffeomorphism}) and the area formula, we obtain the desired $\Theta:=\theta\circ \mc T$. 
\end{proof}

\begin{lemma}\label{Lem: measure on A}
	Let $\Theta\in C^0(\mc I)$ be the continuous positive function given in Lemma \ref{Lem: measure derivative}. 
	Define
	\begin{align}\label{Eq: positive function m}
		m: A\to (0,\infty),\qquad  m(a) := \int_{\Sigma_a} \Theta(a,p) ~d\vol_{\Sigma_a},
	\end{align}
	as a continuous positive function on $A$, and define a positive continuous function $J$ on $\mc I$ by
	\begin{align}\label{Eq: function J}
		J: \mc I \to (0,\infty), \qquad J(a,p):= \frac{W\Theta(a,p)}{m(a)},
	\end{align}
	where $W=\Area(\Sigma_a)$ for all $a\in A$ (cf. \eqref{Eq: constant area}). 
	Then, 
	\begin{itemize}
		\item[(i)] the density $d\mu_A:= m\cdot d\vol_A$ gives a normalized measure $\mu_A$ on $A$ (i.e. $\mu_A(A) = 1$) that is independent of the choice of $g_A$; 
		\item[(ii)] $\int_A f d\mu_A = \int_{\mc I} f\circ \pi_A ~d\mu_{\mc I}$, for any $f\in C^0(A)$;
		\item[(iii)] $J$ is independent of the choice of $g_A$, $ \frac{1}{W} \int_{\Sigma_a} J (a,p)~ d\vol_{\Sigma_a}(p) = 1$ for any $a\in A$, and 
		\begin{align}
			\int_{\mc I} f d\mu_{\mc I} = \int_A \frac{1}{W} \int_{\Sigma_a} fJ ~d\vol_{\Sigma_a}~d\mu_A, \qquad\forall f\in C^0(\mc I),
		\end{align} 
	\end{itemize}
	where $\pi_A: (a,p)\in\mc I\mapsto a\in A$ is the natural projection.
\end{lemma}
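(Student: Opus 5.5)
Everything follows from the Fubini-type identity \eqref{Eq: dV_I = dV_A dV_Sigma} combined with Lemma \ref{Lem: measure derivative}. First, $m$ is continuous and positive. Positivity holds because $\Theta>0$ and $\Area(\Sigma_a)=W>0$. For continuity, fix $a_0$ and use the local chart $\Psi$ of Lemma \ref{Lem: C1-manifold structure of incidence}: for $a$ near $a_0$, pull $\int_{\Sigma_a}\Theta(a,\cdot)\,d\vol_{\Sigma_a}$ back to $\Sigma_{a_0}$ via the $C^{3,\alpha}$ diffeomorphism $E_a=\exp^\perp_{\Sigma_{a_0}}(S_{a_0}(a))$. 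This gives an integral over the fixed surface $\Sigma_{a_0}$ of $\Theta(a,E_a(q))\,\mathrm{Jac}(E_a)(q)$. The integrand is jointly continuous in $(a,q)$ because $S_{a_0}$ is $C^1$ into $\Gamma^{3,\alpha}$. A partition of unity over finitely many charts $V_{p_0}$ then gives continuity of $m$, and hence positivity and continuity of $J=W\Theta/m$.

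For (ii), take $f\in C^0(A)$ and apply Lemma \ref{Lem: measure derivative} followed by \eqref{Eq: dV_I = dV_A dV_Sigma}:
\[\int_{\mc I} f\circ\pi_A\,d\mu_{\mc I}=\int_{\mc I} f(a)\Theta\,d\vol_{\mc I}=\int_A f(a)\int_{\Sigma_a}\Theta\,d\vol_{\Sigma_a}\,d\vol_A=\int_A f\,m\,d\vol_A=\int_A f\,d\mu_A.\]
Setting $f\equiv1$ gives $\mu_A(A)=\mu_{\mc I}(\mc I)=\mu_{G_1(M)}(G_1(M))=1$. The same identity also gives independence from $g_A$. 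Indeed, the left side depends only on $\mu_{\mc I}=\mc T^*\mu_{G_1(M)}$, which is defined without reference to $g_A$. Two Radon measures that agree on all of $C^0(A)$ coincide, so $\mu_A$ is intrinsic, which proves (i). It is worth remarking that $m$ and $\Theta$ individually do depend on $g_A$, since $\vol_{\mc I}$ does.

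For (iii), the normalization is immediate: $\frac1W\int_{\Sigma_a}J\,d\vol_{\Sigma_a}=\frac{1}{m(a)}\int_{\Sigma_a}\Theta\,d\vol_{\Sigma_a}=1$. For the disintegration formula with $f\in C^0(\mc I)$, we compute
\[\int_{\mc I} f\,d\mu_{\mc I}=\int_A\int_{\Sigma_a} f\Theta\,d\vol_{\Sigma_a}\,d\vol_A=\int_A\frac{1}{m(a)}\int_{\Sigma_a} f\Theta\,d\vol_{\Sigma_a}\,m(a)\,d\vol_A=\int_A\frac1W\int_{\Sigma_a} fJ\,d\vol_{\Sigma_a}\,d\mu_A.\]

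The main obstacle is the independence of $J$ from $g_A$. We argue it directly. Replace $g_A$ by $g_A'$, so that $d\vol_A'=\rho\,d\vol_A$ for a smooth positive $\rho$ on $A$. By \eqref{Eq: metric on incidence manifold}, the metric $g_{\mc I}$ is the orthogonal sum of $g_A$ and the fiber metric $g_M\llcorner\Sigma_a$, transported through Lemma \ref{Lem: tangent space of incidence manifold}. Hence $d\vol_{\mc I}'=(\rho\circ\pi_A)\,d\vol_{\mc I}$; this is the pointwise content of \eqref{Eq: dV_I = dV_A dV_Sigma}. Because $\mu_{\mc I}$ is unchanged, uniqueness of the continuous Radon–Nikodym derivative gives $\Theta'=\Theta/(\rho\circ\pi_A)$, and consequently $m'=m/\rho$. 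Therefore $J'=W\Theta'/m'=W\Theta/m=J$. An alternative that avoids the pointwise density relation: for each fixed $a$, $J(a,\cdot)$ is determined by the disintegration formula of (iii), since $\mu_A$ is intrinsic and $J$ is continuous. Applying that formula to $f=\phi(a)\psi(a,p)$ identifies $\frac1W\int_{\Sigma_a}\psi J$ for $\mu_A$-a.e. $a$, and continuity extends this to every $a$.
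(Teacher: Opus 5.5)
Your proof is correct and follows the paper's route. Parts (ii) and (iii) use the same Fubini computation via \eqref{Eq: dV_I = dV_A dV_Sigma} and Lemma \ref{Lem: measure derivative}, and the $g_A$-independence of $J$ uses the same rescaling $d\vol_{\mc I}'=(\rho\circ\pi_A)\,d\vol_{\mc I}$. There are two minor differences from the paper: you deduce the independence of $\mu_A$ directly from (ii), i.e.\ $\mu_A=(\pi_A)_*\mu_{\mc I}$, which is slightly cleaner than the paper's density computation, and you supply a proof of the continuity of $m$, which the paper only asserts.
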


\begin{proof}
	Given any $f\in C^0(A)$, it follows from \eqref{Eq: positive function m}, \eqref{Eq: dV_I = dV_A dV_Sigma}, and Lemma \ref{Lem: measure derivative} that
	\begin{align*}
		\int_Af(a) d\mu_A(a) &= \int_A  \int_{\Sigma_a} f\circ \pi_A (a,p)\cdot \Theta(a,p) ~d\vol_{\Sigma_a}  d\vol_A 
		\\
		&= \int_{\mc I} (f\circ \pi_A)\cdot \Theta ~d\vol_{\mc I}=\int_{\mc I}f\circ \pi_A ~d\mu_{\mc I}, 
	\end{align*}
	which proves (ii). 
	By taking $f\equiv 1$, we obtain $\mu_A(A)=\mu_{\mc I}(\mc I) = \mu_{G_1(M)}(G_1(M))=1$. 
	
	Let $g_A'$ be another Riemannian metric on $A$, which gives an associated metric $g_{\mc I}'$ on $\mc I$ defined similarly to \eqref{Eq: metric on incidence manifold}. 
	By the proof of Lemma \ref{Lem: measure derivative}, we have $d\vol_{A} = \rho \cdot  d\vol_A'$ and $d\vol_{\mc I}=(\rho\circ \pi_A) \cdot d\vol_{\mc I}'$ for some positive continuous function $\rho$ on $A$, where $d\vol_{A}'$ and $d\vol_{\mc I}'$ are the Riemannian densities induced by $g_A'$ and $g_{\mc I}'$ respectively. 
	Hence, $d\mu_\mc I = \Theta d\vol_\mc I = \Theta\cdot(\rho\circ \pi_A ) d\vol_{\mc I}'$, and $m'(a)= \int_{\Sigma_a} \Theta\cdot(\rho\circ \pi_A ) d\vol_{\Sigma_a} = \rho(a)\cdot  m(a)$, which implies that $\mu_A':=m'(a)d\vol_A'(a)= m(a)d\vol_A(a)=\mu_A$ is independent of the choice of $g_A$. 
	Similarly, one easily shows that $J$ is independent of $g_A$. 
	
	Moreover, it is clear that $ \frac{1}{W} \int_{\Sigma_a} J (a,p)~ d\vol_{\Sigma_a}(p) = 1$. 
	By \eqref{Eq: dV_I = dV_A dV_Sigma} and Lemma \ref{Lem: measure derivative}, 
	\begin{align*}
		\int_{\mc I} f~d\mu_{\mc I} &= \int_{\mc I} f\cdot \Theta d\vol_{\mc I} = \int_A\int_{\Sigma_a} f\cdot \Theta ~d\vol_{\Sigma_a}~d\vol_A
		\\
		&= \int_A \frac{m(a)}{W} \left(\int_{\Sigma_a} f J ~d\vol_{\Sigma_a}\right)~d\vol_A
		\\
		&= \int_A \frac{1}{W} \left(\int_{\Sigma_a} f J ~d\vol_{\Sigma_a}\right)~d\mu_A
	\end{align*}
	for any $f\in C^0(\mc I)$. 
	This shows (iii). 
\end{proof}

We are now ready to show the following weighted Crofton formula for $C^1$-embedded compact curves on $M=RP^3$ with a surface Zoll metric $g_M$.

\begin{theorem}\label{Thm: weighted Crofton formula}
	Using the above notation, 
	\begin{align}\label{Eq: weighted Crofton formula}
		\int_A \Big( \sum_{p\in \gamma\cap \Sigma_a} J(a,p)\cdot f(p) \Big) ~d\mu_A(a) = \frac{W}{2\Vol(M)} \int_\gamma f(p) ~dL_\gamma(p),
	\end{align}
	where $\gamma\subset M$ is any $C^1$-embedded compact curve, $f$ is any continuous function on $\gamma$, and $dL_\gamma$ is the length density induced by $g_{M}\llcorner\gamma$. 
\end{theorem}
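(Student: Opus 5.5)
We reduce the statement to an integral over $G_1(M)$ via the diffeomorphism $\mc T$ and the measure $\mu_{\mc I}=\mc T^*\mu_{G_1(M)}$, and then evaluate that integral fiberwise using the round metric $g_{G_1(T_pM)}$.

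\textbf{Step 1: Reduction to the incidence curve.} Let $\mc I_\gamma:=\pi_M^{-1}(\gamma)\cap\mc I=\{(a,p)\in\mc I: p\in\gamma\}$. By Lemma \ref{Lem: C1 diffeomorphism}, $\mc I_\gamma=\mc T^{-1}(G_1(M)|_\gamma)$, and $G_1(M)|_\gamma$ is a compact $C^1$ $3$-manifold, a $\mb{RP}^2$-bundle over $\gamma$. So $\mc I_\gamma$ is a $C^1$ $3$-manifold, and $\pi_A^\gamma:=\pi_A|_{\mc I_\gamma}:\mc I_\gamma\to A$ is a $C^1$ map between $3$-manifolds. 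Its fiber over $a$ is $\gamma\cap\Sigma_a$. The left side of \eqref{Eq: weighted Crofton formula} is therefore $\int_A\sum_{(\pi_A^\gamma)^{-1}(a)}Jf\,d\mu_A$, which the area formula (coarea in equal dimensions) rewrites as an integral over $\mc I_\gamma$ against the pullback $(\pi_A^\gamma)^*(d\mu_A)$, weighted by $Jf$. By Sard's theorem for $C^1$ maps in equal dimension, the set of $a$ with non-transverse intersection $\gamma\cap\Sigma_a$ is $\mu_A$-null, so the sum is finite for a.e.\ $a$.

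\textbf{Step 2: Identify the density.} We compute $(\pi_A^\gamma)^*(m\,d\vol_A)\cdot J/W=(\pi_A^\gamma)^*(d\vol_A)\cdot\Theta$ at $(a,p)\in\mc I_\gamma$. The key tool is the splitting $g_{\mc I}=g_A+g_{\Sigma_a}$ from \eqref{Eq: metric on incidence manifold} together with Lemma \ref{Lem: tangent space of incidence manifold}. A tangent vector to $\mc I_\gamma$ is a pair $(\dot a,v)$ with $v\in T_p\gamma$ and $v^\perp_a=\mc V_a(\dot a)(p)$. Write $T_{(a,p)}\mc I=T_aA\oplus T_p\Sigma_a$ orthogonally. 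Then $T\mc I_\gamma$ is the preimage of $T_p\gamma$ under $(\dot a,w)\mapsto \mc V_a(\dot a)(p)+w$.

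The Jacobian of $\pi_A^\gamma$, relative to $g_{\mc I}|_{\mc I_\gamma}$, should equal $|\langle e,\nu\rangle|$ times the Jacobian of the normal-line map. Here $e$ is the unit tangent to $\gamma$ and $\nu=\nu_{\Sigma_a}(p)$. Equivalently, the coarea-type identity
\[d\vol_{\mc I}=\frac{1}{|\langle e,\nu\rangle|}\,d\vol_{\mc I_\gamma}\,dt\]
holds in the $M$-direction transverse to $\gamma$. Rather than compute this directly, we transport everything through $\mc T$: on $G_1(M)|_\gamma$ the measure $\mu_{G_1(M)}$ restricts to $\frac{1}{\Vol(M)}\,dL_\gamma\,d\omega_p$. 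The identity we want is then
\[\int_{\mc I_\gamma}F\,(\pi_A^\gamma)^*(\Theta\, d\vol_A)=\int_\gamma\int_{G_1(T_pM)}F\circ\mc T^{-1}\,|\langle e,\nu\rangle|\,\frac{d\omega_p\,dL_\gamma}{\Vol(M)}.\]
This is a pointwise linear-algebra statement. The linear map $d\mc T$ has a block form, and the factor $|\langle e,\nu\rangle|$ arises because moving $p$ along $e$ inside $\mc I$ requires $\dot a$ with $\mc V_a(\dot a)(p)=\langle e,\nu\rangle\nu$, up to a tangential part. I would verify it by approximating: thicken $\gamma$ to a tube $T_r$ of radius $r$, apply Lemma \ref{Lem: measure on A}(iii) to a cutoff $f_r\approx \chi_{T_r}/(\pi r^2)$, and let $r\to0$. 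The Euclidean-in-the-limit count is that $\Sigma_a\cap T_r$ has area $\approx \pi r^2/|\langle e,\nu\rangle|$ per crossing, while on the $G_1$ side the integral is $\int_\gamma\int f_r$. Matching the two sides yields the claimed density.

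\textbf{Step 3: Conclude.} With $F=f\circ\pi_M$, the right side of Step 2 becomes $\frac{1}{\Vol(M)}\int_\gamma f(p)\int_{G_1(T_pM)}|\langle e,\nu\rangle|\,d\omega_p\,dL_\gamma$. The inner integral is, for the round $\mb{RP}^2$ of area $1$, the average of $|\cos|$ over the sphere, which equals $1/2$. Multiplying by the factor $W$ from $J=W\Theta/m$ gives $\frac{W}{2\Vol(M)}\int_\gamma f\,dL_\gamma$.

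\textbf{Main obstacle.} The hardest part is Step 2: the Jacobian identity relating $(\pi_A^\gamma)^*d\vol_A$ to $d\vol_{\mc I}$ with the factor $|\langle e,\nu\rangle|$, all at $C^1$ regularity. The tube-limit argument avoids computing $d\mc T$ explicitly, but it needs uniform control of $J$, $\Theta$, and of the transversality near tangential intersections. I would handle the tangential set, where $\langle e,\nu\rangle=0$, by showing it contributes zero on both sides: it is measure zero in $G_1(M)|_\gamma$, and its $\pi_A$-image is $\mu_A$-null by Sard.
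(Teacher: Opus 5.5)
Your Steps 1 and 3 coincide with the paper's argument: the area formula for $\pi_A^\gamma:\mc I_\gamma\to A$, then $\int_{G_1(T_pM)}|\langle\gamma',\nu\rangle|\,d\omega_p=\frac12$. The identity you state in Step 2 is also the correct one. The gap is that Step 2 is never proved, and the tube argument you offer in its place cannot prove it.

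Apply Lemma \ref{Lem: measure on A}(iii) to $f_r\approx f\chi_{T_r}/(\pi r^2)$, which is a function of $p$ alone, and grant the interchange of $r\to0$ with $\int_A$. The left side tends to $\frac{1}{\Vol(M)}\int_\gamma f\,dL_\gamma$. By your own count $\Area(\Sigma_a\cap T_r)\approx\pi r^2/|\langle e,\nu\rangle|$, the right side tends to $\frac1W\int_A\sum_{p\in\gamma\cap\Sigma_a}J(a,p)f(p)/|\langle e,\nu_{\Sigma_a}(p)\rangle|\,d\mu_A$. That is a different formula, with weight $J/|\langle e,\nu\rangle|$. It only pins down the push-forward to $\gamma$ of $|\langle e,\nu\rangle|^{-1}J\,(\pi_A^\gamma)^*d\mu_A$. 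It says nothing about how the density is distributed along the fibers $\mc T^{-1}(G_1(T_pM))$, and that distribution is exactly where the constant $\frac12$ comes from.

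Indeed, every density $\frac{W}{\Vol(M)}|\langle e,\nu\rangle|\,\rho\,d\omega_p\,dL_\gamma$ with $\int_{G_1(T_pM)}\rho\,d\omega_p=1$ passes your test. Yet it yields $\frac{W}{\Vol(M)}\int_\gamma f\int_{G_1(T_pM)}\rho\,|\langle e,\nu\rangle|\,d\omega_p\,dL_\gamma$ in place of the right side of \eqref{Eq: weighted Crofton formula}. A tube argument could be repaired only with two additions:
\begin{itemize}
\item fiber-dependent test functions on $\mc I$, e.g.\ $f_r(p)\,|\langle\tilde e(p),\nu_{\Sigma_a}(p)\rangle|$ with $\tilde e$ extending $\gamma'$;
\item a real justification of the $r\to0$ limit near tangential intersections, where $\Area(\Sigma_a\cap T_r)/(\pi r^2)$ is unbounded and a $C^1$ curve may meet $\Sigma_a$ infinitely often. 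Sard's theorem only removes exact tangencies and gives no such uniformity.
\end{itemize}

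The missing idea is the paper's direct pointwise computation, which needs no formula for $d\mc T$. At a transversal point, choose:
\begin{itemize}
\item $\xi_1,\xi_2\in\ker d\pi_M$;
\item $\tau\in T\mc I_\gamma$ with $d\pi_M\tau=\gamma'$;
\item $e_1,e_2\in\ker d\pi_A$ with $E_i=d\pi_Me_i$ orthonormal in $T_p\Sigma_a$.
\end{itemize}
By \eqref{Eq: metric on incidence manifold}, $\ker d\pi_A$ is $g_{\mc I}$-orthogonal to a complement on which $d\pi_A$ is an isometry. Hence $(\pi_A^\gamma)^*d\vol_A(\xi_1,\xi_2,\tau)=d\vol_{\mc I}(\xi_1,\xi_2,\tau,e_1,e_2)$, and so $J\,(\pi_A^\gamma)^*d\mu_A(\xi_1,\xi_2,\tau)=W\,d\mu_{G_1(M)}(d\mc T\xi_1,d\mc T\xi_2,d\mc T\tau,d\mc Te_1,d\mc Te_2)$.

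Since $\pi_M=\Pi\circ\mc T$, the vectors $d\mc T\xi_i$ are vertical. The product form of $\mu_{G_1(M)}$ then splits the last expression as $\frac{W}{\Vol(M)}\,d\omega_p(d\mc T\xi_1,d\mc T\xi_2)\,d\vol_M(\gamma',E_1,E_2)=\frac{W}{\Vol(M)}\,d\omega_p(d\mc T\xi_1,d\mc T\xi_2)\,|\langle\gamma',\nu\rangle|$. This is your Step 2 identity, established pointwise. At tangential points both sides vanish, because $d\pi_A^\gamma$ has a nontrivial kernel there.
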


\begin{remark}\label{Rem: intrinsic and rectifiable curves}
	In the above formula, $J$ and $d\mu_A$ depend only on the surface Zoll metric $g_{M}$ and the nondegenerate parameterization of the Zoll family $\{\Sigma_a\}_{a\in A}$. 
	One can also replace the parameterized Zoll family $\{\Sigma_a\}_{a\in A}$ by the un-parameterized family of minimal $RP^2$ given by the quotients of $\wti\Sigma\in Z$ in Theorem \ref{Thm: rigidity} {\bf Step 5} so that the formula \eqref{Eq: weighted Crofton formula} can be made independent of the choice of the parameterization. 
    Additionally, by using the approximate tangent lines, the following proof of Theorem \ref{Thm: weighted Crofton formula} can also be generalized to rectifiable curves (of multiplicity one). 
\end{remark}

\begin{proof}[Proof of Theorem \ref{Thm: weighted Crofton formula}]
	Let $\Pi: G_1(M)\to M$ be the bundle projection. 
	Given the curve $\gamma\subset M$, denote by $G_1(\gamma):= \Pi^{-1}(\gamma)$ the restricted Grassmannian bundle, which is a $3$-dimensional $C^1$ submanifold of $G_1(M)$. 
	Define $\mc I_\gamma:= \mc T^{-1}G_1(\gamma)$ as the corresponding $3$-dimensional $C^1$ submanifold of $\mc I$. 
	Consider the projections 
	\[ \pi_A:(a,p)\in\mc I \mapsto a\in A \qquad{\rm and}\qquad\pi_M:(a,p)\in\mc I \mapsto p\in M .\]
	Using the local chart \eqref{Eq: local chart for incidence manifold} of $\mc I$ and the fact that $\pi_M=\Pi\circ\mc T$, $\pi_A$ and $\pi_M$ are $C^1$-submersions from $\mc I$ to $A$ and $M$ respectively. 
	Hence, the restricted projection $\pi_A^\gamma=\pi_A\llcorner \mc I_{\gamma} : \mc I_\gamma\to A$ is a $C^1$-map between two $3$-dimensional manifolds. 
	By the area formula, 
	\begin{align}\label{Eq: area formula}
		\int_A \Big(\sum_{p\in(\pi_A^\gamma)^{-1}(a)} J(a,p) f(p)\Big) ~d\mu_A(a) = \int_{\mc I_\gamma}  J\cdot (f\circ\pi_M) ~(\pi_A^\gamma)^*d\mu_A, \quad\forall f\in C^0(\gamma),
	\end{align}
	where $J$ is given by \eqref{Eq: function J}, $d\mu_A$ is the Riemannian density induced by the conformal metric $\tilde g_A:=m^{2/3} g_A$ by Lemma \ref{Lem: measure on A}(i), and $(\pi_A^\gamma)^*d\mu_A$ is the pull-back density. 
	
	To compute $J\cdot (\pi_A^\gamma)^*d\mu_A$, we fix any $(a,p)\in \mc I_\gamma$ so that $\gamma$ is transversal to $\Sigma_a$ at $p$. (Clearly, such a transversal point $(a,p)\in \mc I_\gamma$ exists almost everywhere.) 
	Take 
	\begin{itemize}
		\item $\xi_1,\xi_2\in \ker(d\pi_M):=\{(\dot a, v)\in T_{(a,p)}\mc I : d\pi_M(\dot a, v)=0\}=\{(\dot a,0)\in T_{(a,p)}\mc I: \mc V_a(\dot a)(p)=0\}$;
		\item $\tau\in 	T_{(a,p)}\mc I_\gamma$ with $\gamma'|_{p} = d\pi_M(\tau )$, where $\gamma'$ is the unit tangent vector field on $\gamma$;
		\item $e_1,e_2\in \ker(d\pi_A):=\{(\dot a, v)\in T_{(a,p)}\mc I : d\pi_A(\dot a, v)=0\}=\{(0,v)\in T_aA\times T_p\Sigma_a\}$ so that $\{E_i:=d\pi_M(e_i)\}_{i=1}^2\subset T_p\Sigma_a$ forms an orthonormal basis. 
	\end{itemize}
	Recall that $g_\mc I|_{(a,p)}= (d\pi_A)^*g_{A}|_a + d(n_{\Sigma_a}\circ\pi_M)^* g_{\Sigma_a}|_p$ by \eqref{Eq: metric on incidence manifold}, where $n_{\Sigma_a}$ is the $g_M$-geodesic nearest projection to $\Sigma_a$. 
	Thus, for any $\xi\in T_{(a,p)}\mc I$, $d\pi_A\xi \in T_aA$ can be viewed as the projection of $\xi$ to the orthogonal complement of $\ker(d\pi_A)\cong T_p\Sigma_a$ in $(T_{(a,p)}\mc I, g_{\mc I}|_{(a,p)})$. 
	Hence, 
	\begin{align*}
		J(a,p)\cdot [(\pi_A^\gamma)^*d\mu_A](\xi_1,\xi_2,\tau) &= J(a,p) \cdot d\mu_A\left( d\pi_A^\gamma\xi_1, d\pi_A^\gamma \xi_2, d\pi_A^\gamma\tau \right)
		\\
		&= J(a,p) m(a) \cdot d\vol_A\left( d\pi_A^\gamma\xi_1, d\pi_A^\gamma \xi_2, d\pi_A^\gamma\tau \right) \cdot d\vol_{\Sigma_a}(d\pi_M e_1, d\pi_M e_2)
		\\
		&= W\Theta(a,p)\cdot d\vol_{\mc I}(\xi_1,\xi_2,\tau,e_1,e_2)
		\\
		&= W  d\mu_{\mc I}(\xi_1,\xi_2,\tau,e_1,e_2),
	\end{align*}
	where we used Lemma \ref{Lem: measure on A}(i) and the fact that $d\vol_{\Sigma_a}(E_1,E_2)=1$ in the second equality, used \eqref{Eq: metric on incidence manifold}\eqref{Eq: dV_I = dV_A dV_Sigma} in the third equality, and used Lemma \ref{Lem: measure derivative} in the last equality. 
	
	Since $\pi_M=\Pi\circ \mc T$ and $\gamma$ is transversal to $\Sigma_a$, we know that $d\mc T\xi_1,d\mc T\xi_2 \in \ker(d\Pi)$ are tangent to the fiber $G_1(T_pM)$, while $\{d\mc T\tau,d\mc Te_1,d\mc Te_2\}$ are mapped to a basis of $T_pM$ by $d\Pi$. 
	Therefore, combining this with Remark \ref{Rem: density and volume form}, \eqref{Eq: pull-back probability measure on I} and \eqref{Eq: positive linear functional}, 
	\begin{align*}
		J(a,p)\cdot [(\pi_A^\gamma)^*d\mu_A](\xi_1,\xi_2,\tau) &= W\cdot d\mu_{G_1(M)}(d\mc T\xi_1, d\mc T\xi_2, d\mc T\tau, d\mc Te_1, d\mc Te_2)
		\\
		&= \frac{W}{\Vol(M)} \cdot d\omega_p(d\mc T\xi_1, d\mc T\xi_2) \cdot d\vol_M(\gamma'|_p, E_1,E_2)
		\\
		&= \frac{W}{\Vol(M)} \cdot d\omega_p(d\mc T\xi_1, d\mc T\xi_2)\cdot  \left | \left \langle \gamma'|_p, \nu_{\Sigma_a}(p) \right \rangle \right|,
	\end{align*}
	where $\nu_{\Sigma_a}(p)$ is a unit normal of $\Sigma_a$ at $p$, the fact that $\{E_1,E_2\}$ is an orthonormal basis of $T_p\Sigma_a$ is used in the last line, and we take the absolute value since the density ignores the orientation. 
	
	Now, we have shown that the density $J\cdot [(\pi_A^\gamma)^*d\mu_A]$ at any transversal point $(a,p)\in \mc I_\gamma$ coincides with the density $\frac{W}{\Vol(M)} \left | \left \langle \gamma'|_p, \nu_{\Sigma_a} \right \rangle \right|\cdot \mc T_p^*d\omega_p\cdot   dL_\gamma$ on the fiber bundle $\pi_M=\Pi\circ \mc T:\mc I_\gamma \to \gamma$, where $\mc T_p:=\mc T\llcorner \pi_M^{-1}(p)$. 
    Additionally, if $(a,p)\in \mc I_\gamma$ with $\gamma$ tangent to $\Sigma_a$ at $p$, then since $T_{(a,p)}\mc I_\gamma=\{(\dot a, v): v\in T_p\gamma, v^\perp_a=\mc V_a(\dot a)(p), \}$ and $(d\pi_A^\gamma)_{(a,p)}(\dot a, v)=\dot a$, we know $\ker((d\pi_A^\gamma)_{(a,p)})=\{(0,v):v\in T_p\gamma\cap T_p\Sigma_a\}$ has dimension $1$, which implies that $d\pi_A^\gamma$ is not full rank and  $(\pi_A^\gamma)^*d\mu_A=0=| \left \langle \gamma'|_p, \nu_{\Sigma_a}(p) \right \rangle |$. 
	Combining these with \eqref{Eq: area formula}, we see that
	\begin{align*}
		\int_A \Big(\sum_{p\in(\pi_A^\gamma)^{-1}(a)} J(a,p)f(p)\Big) ~d\mu_A(a) 
		&=\frac{W}{\Vol(M)} \int_\gamma f(p) \int_{\pi_M^{-1}(p)} \left | \left \langle \gamma'|_{p}, \nu_{\Sigma_a} \right \rangle \right| \mc T_{p}^*d\omega_{p}(a) ~dL_\gamma(p)
		\\
		&= \frac{W}{\Vol(M)} \int_\gamma f(p) \int_{G_1(T_pM)} \left | \left \langle \gamma'|_p, \nu \right \rangle \right| d\omega_p([\nu]) ~dL_\gamma(p).
	\end{align*}
	Finally, note that $\gamma'$ is a unit tangent vector field on $\gamma$. 
	Hence, under the normalized round metric $g_{G_1(T_pM)}$ in \eqref{Eq: round metric on fiber G_1}, we have 
	\begin{align*}
		\int_{G_1(T_pM)}  \left | \left \langle \gamma'(p), \nu \right \rangle \right| d\omega_p([\nu]) &= \frac{1}{2}\int_{S_pM} \left | \left \langle \gamma'(p), \nu \right \rangle \right| d\Area_{g_{S_pM}}(\nu) = \frac{1}{4\pi} \int_{\mb S^2_1} |\cos(\theta_\nu)| d\Area_{g_{\mb S^2_1}}(\nu) 
		\\
		&= \frac{1}{4\pi} \int_0^{2\pi}\int_0^\pi |\cos(\theta)|\sin(\theta) d\theta d\phi \equiv 1/2,
	\end{align*}
	which shows the desired result.
\end{proof}



The results in Theorem \ref{Main Thm: weighted Crofton formula} now follow directly from Lemma \ref{Lem: measure on A} and Theorem \ref{Thm: weighted Crofton formula}.
As a corollary, we have the following systolic inequality on $M=RP^3$ with surface Zoll metrics, which further implies an inequality that relates the volume and projective area widths. 

\begin{corollary}
    Suppose $J:\mc I\to (0,\infty)$, $W>0$, and $\mu_A$ are given as in Lemma \ref{Lem: measure on A}. 
    Let $j(a):=\min_{p\in\Sigma_a}J(a,p)$ for any $a\in \mb {RP}^3$, and let $\varsigma:=\int_{A}j(a)d\mu_A\in (0,1]$. 
    Then, the systole $\text{sys}(M,g_M)$ of $M=RP^3$ with a surface Zoll metric $g_M$ satisfies
    \[ \varsigma\cdot\frac{2\Vol(M)}{W}\leq \text{sys}(M,g_M) \leq \sqrt{\frac{\pi W}{2}}. \]
    In particular, $8\varsigma^2\cdot(\Vol(M))^2\leq \pi W^3$ with equality if and only if $g_M$ has constant sectional curvature. 
\end{corollary}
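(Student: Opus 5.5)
The upper bound comes straight from \cite{ambrozio2024rigidity}*{Theorem C}, and the lower bound from Theorem \ref{Thm: weighted Crofton formula} applied to a shortest non-contractible loop; the squared inequality is their combination. The only genuinely geometric step is the intersection count, where $\gamma$ is only $C^1$ away from any points where it fails to be embedded or smooth.

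\textbf{Lower bound.} Let $\gamma$ be a shortest non-contractible closed curve in $(M,g_M)$, which is a closed geodesic realizing $\text{sys}(M,g_M)$. The plan is as follows.
\begin{enumerate}
\item Show that $\gamma$ is embedded. If it had a self-intersection, it would split into two loops of smaller length. At least one of them is non-contractible, since their classes multiply to the nontrivial class of $\pi_1(RP^3)=\mb Z_2$. This contradicts minimality, so $\gamma$ is a $C^1$-embedded compact curve.
\item Show that $\gamma$ meets every $\Sigma_a$. Each $\Sigma_a$ is an embedded $RP^2$, hence represents the nontrivial class in $H_2(RP^3;\mb Z_2)$. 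The non-contractible loop $\gamma$ represents the nontrivial class in $H_1(RP^3;\mb Z_2)$. Their $\mb Z_2$ intersection number is therefore $1$, so $\gamma\cap\Sigma_a\neq\emptyset$ for every $a\in A$.
\item Bound the left side of Theorem \ref{Thm: weighted Crofton formula} with $f\equiv 1$. Each sum $\sum_{p\in\gamma\cap\Sigma_a}J(a,p)$ has at least one term, and each term is at least $j(a)$. Integrating against $\mu_A$, the left side is at least $\int_A j\,d\mu_A=\varsigma$.
\item Conclude. The right side of the formula equals $\frac{W}{2\Vol(M)}\,\text{sys}(M,g_M)$, which gives $\varsigma\cdot 2\Vol(M)/W\leq \text{sys}(M,g_M)$.
\end{enumerate}

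The range $\varsigma\in(0,1]$ also needs justifying. By Lemma \ref{Lem: measure on A}(iii), $\int_{\Sigma_a}J\,d\vol_{\Sigma_a}=W=\Area(\Sigma_a)$, so $J(a,\cdot)$ has average $1$ on $\Sigma_a$ and hence $0<j(a)\leq 1$. Since $\mu_A$ is a probability measure, $\varsigma\in(0,1]$.

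\textbf{Upper bound.} I would invoke \cite{ambrozio2024rigidity}*{Theorem C}, stated in terms of $\sigma_2(RP^3,g_{RP^3})$, which here equals $W$ by Theorem \ref{main Thm: zoll metric on RP3}. Theorem C should give $\text{sys}(RP^3,g)^2\leq \frac{\pi}{2}\sigma_2(RP^3,g)$, with equality if and only if $g$ has constant sectional curvature. The normalization needs checking: for the round $RP^3$ of curvature $1$, $\text{sys}=\pi$ and $\sigma_2=2\pi$, and indeed $\pi^2=\frac{\pi}{2}\cdot 2\pi$.

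\textbf{Combined inequality.} Squaring and chaining the two bounds gives $4\varsigma^2\Vol(M)^2/W^2\leq \pi W/2$, that is, $8\varsigma^2\Vol(M)^2\leq \pi W^3$.

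\textbf{Equality.} If equality holds in the combined inequality, the upper bound must be attained, so the equality case of Theorem C gives constant curvature.

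For the converse, take the round metric of curvature $1$. By rotational symmetry, $J\equiv 1$ and $\varsigma=1$. Then $\Vol(M)=\pi^2$ and $W=2\pi$, so
\[
8\varsigma^2\Vol(M)^2=8\pi^4=\pi(2\pi)^3=\pi W^3,
\]
and equality holds. For the statement "constant sectional curvature", a scaling check is needed: both sides scale like $\lambda^6$ under $g\mapsto\lambda^2 g$, and $\varsigma$ is scale-invariant, so equality persists for every constant-curvature metric.

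For $J\equiv 1$, I would argue that in the round case $\mu_{\mc I}$ is invariant under the isometry group $SO(4)/\pm$. That group acts transitively on $\mc I$, which is compatible with $\mc T$. Hence $\Theta$ is constant, and then $J=W\Theta/m$ is constant with average $1$, so $J\equiv1$.
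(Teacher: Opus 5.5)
Your proposal is correct and is essentially the paper's argument: apply the weighted Crofton formula with $f\equiv 1$ to a non-contractible embedded loop, use that $\gamma$ and $\Sigma_a$ have mod-$2$ intersection number $1$ to get $\sum_{p\in\gamma\cap\Sigma_a}J(a,p)\geq j(a)$, and take the upper bound and the rigidity from \cite{ambrozio2024rigidity}*{Theorem C}. Your additions fill in points the paper leaves implicit: that a systolic geodesic is embedded, that $j(a)\leq 1$ because $J(a,\cdot)$ has average $1$, and that $J\equiv 1$ (so $\varsigma=1$) for the round metric, which gives the ``if'' direction of the equality case (for that last step, choose $g_A$ invariant, as Lemma \ref{Lem: measure on A}(iii) permits, so that $\vol_{\mc I}$ is also invariant and $\Theta$ is indeed constant).
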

\begin{proof}
    Take any non-contractible $C^1$-embedded loop $\gamma\subset M=RP^3$. 
    Then, $[\gamma]\neq 0\in H_1(RP^3;\mb Z_2)$ and $[\Sigma_a]\neq 0\in H_2(RP^3;\mb Z_2)$ for any $a\in A$. 
    Hence, for generic $a\in A$, $\gamma$ and $\Sigma_a$ have mod-$2$ intersection number $1$, which implies $\#(\gamma\cap\Sigma_a)\geq 1$ and $\sum_{\gamma\cap\Sigma_a} J(a,p)\geq j(a)$. 
    By taking $f\equiv 1$ in Theorem \ref{Thm: weighted Crofton formula}, we have $ \frac{W}{2\Vol(M)} \text{Length}(\gamma)\geq \int_Aj(a)d\mu_A(a) = \varsigma$. 
    This shows the lower bound of the systole. 
    Finally, by \cite{ambrozio2024rigidity}*{Theorem C}, one obtains the upper bound of the systole and the last statement. 
\end{proof}

\bibliographystyle{abbrv}

\bibliography{reference.bib}   

@article{allard1972first,
    AUTHOR = {Allard, William K.},
     TITLE = {On the first variation of a varifold},
   JOURNAL = {Ann. of Math. (2)},
  FJOURNAL = {Annals of Mathematics. Second Series},
    VOLUME = {95},
      YEAR = {1972},
     PAGES = {417--491},
      ISSN = {0003-486X},
   MRCLASS = {49F20},
  MRNUMBER = {307015},
MRREVIEWER = {M.\ Klingmann},
       DOI = {10.2307/1970868},
       URL = {https://doi.org/10.2307/1970868},
}

@article{almgren1962homotopy,
    AUTHOR = {Almgren, Jr., Frederick Justin},
     TITLE = {The homotopy groups of the integral cycle groups},
   JOURNAL = {Topology},
  FJOURNAL = {Topology. An International Journal of Mathematics},
    VOLUME = {1},
      YEAR = {1962},
     PAGES = {257--299},
      ISSN = {0040-9383},
   MRCLASS = {55.45 (55.42)},
  MRNUMBER = {146835},
       DOI = {10.1016/0040-9383(62)90016-2},
       URL = {https://doi.org/10.1016/0040-9383(62)90016-2},
}

@article{ambrozio2024rigidity,
	title={Rigidity theorems for the area widths of {R}iemannian manifolds}, 
      author={Lucas Ambrozio and Fernando C. Marques and Andr{\'e} Neves},
      year={2024},
      journal={arXiv preprint, arXiv:2408.14375}
}

@article {ambrozio2025metrics,
    AUTHOR = {Ambrozio, Lucas and Marques, Fernando C. and Neves, Andr\'e},
     TITLE = {Riemannian metrics on the sphere with {Z}oll families of
              minimal hypersurfaces},
   JOURNAL = {J. Differential Geom.},
  FJOURNAL = {Journal of Differential Geometry},
    VOLUME = {130},
      YEAR = {2025},
    NUMBER = {2},
     PAGES = {269--341},
      ISSN = {0022-040X,1945-743X},
   MRCLASS = {53C42 (53C40)},
  MRNUMBER = {4905026},
MRREVIEWER = {Isabel\ M. C. Salavessa},
       DOI = {10.4310/jdg/1747156792},
       URL = {https://doi.org/10.4310/jdg/1747156792},
}

@article{ambrozio2025equivariant,
	title={Equivariant constructions of spheres with {Z}oll families of minimal spheres}, 
      author={Lucas Ambrozio and Diego Guajardo},
      year={2025},
      journal={arXiv preprint, arXiv:2501.16032v2}
}

@article {ambrozio2026spheres,
    AUTHOR = {Ambrozio, Lucas},
     TITLE = {Spheres with minimal equators},
   JOURNAL = {S\~ao Paulo J. Math. Sci.},
  FJOURNAL = {S\~ao Paulo Journal of Mathematical Sciences},
    VOLUME = {20},
      YEAR = {2026},
    NUMBER = {2},
     PAGES = {Paper No. 25, 31},
      ISSN = {1982-6907,2316-9028},
   MRCLASS = {53A10},
  MRNUMBER = {5103556},
       DOI = {10.1007/s40863-026-00548-0},
       URL = {https://doi.org/10.1007/s40863-026-00548-0},
}

@article {bray2010area,
    AUTHOR = {Bray, H. and Brendle, S. and Eichmair, M. and Neves, A.},
     TITLE = {Area-minimizing projective planes in 3-manifolds},
   JOURNAL = {Comm. Pure Appl. Math.},
  FJOURNAL = {Communications on Pure and Applied Mathematics},
    VOLUME = {63},
      YEAR = {2010},
    NUMBER = {9},
     PAGES = {1237--1247},
      ISSN = {0010-3640,1097-0312},
   MRCLASS = {53C42 (53C20)},
  MRNUMBER = {2675487},
MRREVIEWER = {St\'{e}phane\ Sabourau},
       DOI = {10.1002/cpa.20319},
       URL = {https://doi.org/10.1002/cpa.20319},
}

@book {besse1978manifolds,
    AUTHOR = {Besse, Arthur L.},
     TITLE = {Manifolds all of whose geodesics are closed},
    SERIES = {Ergebnisse der Mathematik und ihrer Grenzgebiete [Results in
              Mathematics and Related Areas]},
    VOLUME = {93},
      NOTE = {With appendices by D. B. A. Epstein, J.-P. Bourguignon, L.
              B\'erard-Bergery, M. Berger and J. L. Kazdan},
 PUBLISHER = {Springer-Verlag, Berlin-New York},
      YEAR = {1978},
     PAGES = {ix+262},
      ISBN = {3-540-08158-5},
   MRCLASS = {53C20 (53C22 58G99)},
  MRNUMBER = {496885},
MRREVIEWER = {R.\ L.\ Bishop},
}

@article {cheng1976eigenfunctions,
    AUTHOR = {Cheng, Shiu Yuen},
     TITLE = {Eigenfunctions and nodal sets},
   JOURNAL = {Comment. Math. Helv.},
  FJOURNAL = {Commentarii Mathematici Helvetici},
    VOLUME = {51},
      YEAR = {1976},
    NUMBER = {1},
     PAGES = {43--55},
      ISSN = {0010-2571,1420-8946},
   MRCLASS = {58G99 (35P15)},
  MRNUMBER = {397805},
MRREVIEWER = {Sh\^ukichi\ Tanno},
       DOI = {10.1007/BF02568142},
       URL = {https://doi.org/10.1007/BF02568142},
}

@article {chodosh2023p-widths,
    AUTHOR = {Chodosh, Otis and Mantoulidis, Christos},
     TITLE = {The {$p$}-widths of a surface},
   JOURNAL = {Publ. Math. Inst. Hautes \'Etudes Sci.},
  FJOURNAL = {Publications Math\'ematiques. Institut de Hautes \'Etudes
              Scientifiques},
    VOLUME = {137},
      YEAR = {2023},
     PAGES = {245--342},
      ISSN = {0073-8301,1618-1913},
   MRCLASS = {53C42 (49Q15 49Q20 58J50)},
  MRNUMBER = {4588597},
MRREVIEWER = {Alexis\ Michelat},
       DOI = {10.1007/s10240-023-00141-7},
       URL = {https://doi.org/10.1007/s10240-023-00141-7},
}

@article {franz2023index,
    AUTHOR = {Franz, Giada},
     TITLE = {Equivariant index bound for min-max free boundary minimal
              surfaces},
   JOURNAL = {Calc. Var. Partial Differential Equations},
  FJOURNAL = {Calculus of Variations and Partial Differential Equations},
    VOLUME = {62},
      YEAR = {2023},
    NUMBER = {7},
     PAGES = {Paper No. 201, 28},
      ISSN = {0944-2669,1432-0835},
   MRCLASS = {53C42 (49J35 49Q20 53A10)},
  MRNUMBER = {4621518},
MRREVIEWER = {Alexis\ Michelat},
       DOI = {10.1007/s00526-023-02514-6},
       URL = {https://doi.org/10.1007/s00526-023-02514-6},
}

@book{federer2014geometric,
    AUTHOR = {Federer, Herbert},
     TITLE = {Geometric measure theory},
    SERIES = {Die Grundlehren der mathematischen Wissenschaften},
    VOLUME = {153},
 PUBLISHER = {Springer-Verlag New York, Inc., New York},
      YEAR = {1969},
     PAGES = {xiv+676},
   MRCLASS = {28.80 (26.00)},
  MRNUMBER = {257325},
MRREVIEWER = {J.\ E.\ Brothers},
}

@book{simon1983lectures,
    AUTHOR = {Simon, Leon},
     TITLE = {Lectures on geometric measure theory},
    SERIES = {Proceedings of the Centre for Mathematical Analysis,
              Australian National University},
    VOLUME = {3},
 PUBLISHER = {Australian National University, Centre for Mathematical
              Analysis, Canberra},
      YEAR = {1983},
     PAGES = {vii+272},
      ISBN = {0-86784-429-9},
   MRCLASS = {49-01 (28A75 49F20)},
  MRNUMBER = {756417},
MRREVIEWER = {J.\ S.\ Joel},
}

@article {grayson1989shortening,
    AUTHOR = {Grayson, Matthew A.},
     TITLE = {Shortening embedded curves},
   JOURNAL = {Ann. of Math. (2)},
  FJOURNAL = {Annals of Mathematics. Second Series},
    VOLUME = {129},
      YEAR = {1989},
    NUMBER = {1},
     PAGES = {71--111},
      ISSN = {0003-486X,1939-8980},
   MRCLASS = {53C22 (58E10)},
  MRNUMBER = {979601},
MRREVIEWER = {Gudlaugur\ Thorbergsson},
       DOI = {10.2307/1971486},
       URL = {https://doi.org/10.2307/1971486},
}

@article {galvez2020uniquesness,
    AUTHOR = {G{\'a}lvez, Jos{\'e} A. and Mira, Pablo},
     TITLE = {Uniqueness of immersed spheres in three-manifolds},
   JOURNAL = {J. Differential Geom.},
  FJOURNAL = {Journal of Differential Geometry},
    VOLUME = {116},
      YEAR = {2020},
    NUMBER = {3},
     PAGES = {459--480},
      ISSN = {0022-040X,1945-743X},
   MRCLASS = {53A10 (53C42)},
  MRNUMBER = {4182894},
MRREVIEWER = {Martin\ L. P. Kilian},
       DOI = {10.4310/jdg/1606964415},
       URL = {https://doi.org/10.4310/jdg/1606964415},
}

@article{gromov1988dimension,
	author={Gromov, Mikhael},
	title={Dimension, non-linear spectra and width},
	journal={Geometric Aspects of Functional Analysis (1986/87), Lecture Notes in Math},
	year={1988},
	publisher={Springer Berlin Heidelberg},
	address={Berlin, Heidelberg},
	pages={132--184},
	isbn={978-3-540-39235-4}
}

@article{gromov2003isoperimetry,
    AUTHOR = {Gromov, M.},
     TITLE = {Isoperimetry of waists and concentration of maps},
   JOURNAL = {Geom. Funct. Anal.},
  FJOURNAL = {Geometric and Functional Analysis},
    VOLUME = {13},
      YEAR = {2003},
    NUMBER = {1},
     PAGES = {178--215},
      ISSN = {1016-443X,1420-8970},
   MRCLASS = {53C23},
  MRNUMBER = {1978494},
MRREVIEWER = {Igor\ Belegradek},
       DOI = {10.1007/s000390300004},
       URL = {https://doi.org/10.1007/s000390300004},
}

@article{guth2009minimax,
    AUTHOR = {Guth, Larry},
     TITLE = {Minimax problems related to cup powers and {S}teenrod squares},
   JOURNAL = {Geom. Funct. Anal.},
  FJOURNAL = {Geometric and Functional Analysis},
    VOLUME = {18},
      YEAR = {2009},
    NUMBER = {6},
     PAGES = {1917--1987},
      ISSN = {1016-443X,1420-8970},
   MRCLASS = {53C23},
  MRNUMBER = {2491695},
MRREVIEWER = {John\ F.\ Oprea},
       DOI = {10.1007/s00039-009-0710-2},
       URL = {https://doi.org/10.1007/s00039-009-0710-2},
}

@article {guillemin1976radon,
    AUTHOR = {Guillemin, Victor},
     TITLE = {The {R}adon transform on {Z}oll surfaces},
   JOURNAL = {Advances in Math.},
  FJOURNAL = {Advances in Mathematics},
    VOLUME = {22},
      YEAR = {1976},
    NUMBER = {1},
     PAGES = {85--119},
      ISSN = {0001-8708},
   MRCLASS = {58G15 (53C20)},
  MRNUMBER = {426063},
MRREVIEWER = {J.\ Eells},
       DOI = {10.1016/0001-8708(76)90139-0},
       URL = {https://doi.org/10.1016/0001-8708(76)90139-0},
}

@article{hatcher1983smale,
    AUTHOR = {Hatcher, Allen E.},
     TITLE = {A proof of the {S}male conjecture, {${\rm Diff}(S\sp{3})\simeq
              {\rm O}(4)$}},
   JOURNAL = {Ann. of Math. (2)},
  FJOURNAL = {Annals of Mathematics. Second Series},
    VOLUME = {117},
      YEAR = {1983},
    NUMBER = {3},
     PAGES = {553--607},
      ISSN = {0003-486X,1939-8980},
   MRCLASS = {57M99 (57S05)},
  MRNUMBER = {701256},
MRREVIEWER = {R.\ C.\ Kirby},
       DOI = {10.2307/2007035},
       URL = {https://doi.org/10.2307/2007035},
}

@article {irie2018density,
    AUTHOR = {Irie, Kei and Marques, Fernando C. and Neves, Andr{\'e}},
     TITLE = {Density of minimal hypersurfaces for generic metrics},
   JOURNAL = {Ann. of Math. (2)},
  FJOURNAL = {Annals of Mathematics. Second Series},
    VOLUME = {187},
      YEAR = {2018},
    NUMBER = {3},
     PAGES = {963--972},
      ISSN = {0003-486X,1939-8980},
   MRCLASS = {53C42 (49Q05)},
  MRNUMBER = {3779962},
MRREVIEWER = {Jos\'e\ Miguel\ Manzano},
       DOI = {10.4007/annals.2018.187.3.8},
       URL = {https://doi.org/10.4007/annals.2018.187.3.8},
}

@article {kac1966can,
    AUTHOR = {Kac, Mark},
     TITLE = {Can one hear the shape of a drum?},
   JOURNAL = {Amer. Math. Monthly},
  FJOURNAL = {American Mathematical Monthly},
    VOLUME = {73},
      YEAR = {1966},
    NUMBER = {4},
     PAGES = {1--23},
      ISSN = {0002-9890,1930-0972},
   MRCLASS = {57.50 (00.00)},
  MRNUMBER = {201237},
MRREVIEWER = {I.\ Stakgold},
       DOI = {10.2307/2313748},
       URL = {https://doi.org/10.2307/2313748},
}

@article{lusternic1947topoligical,
    AUTHOR = {Lyusternik, L. and Shnirelman, L.},
     TITLE = {Topological methods in variational problems and their
              application to the differential geometry of surfaces},
   JOURNAL = {Uspehi Matem. Nauk (N.S.)},
  FJOURNAL = {Uspehi Matem. Nauk (N.S.)},
    VOLUME = {2},
      YEAR = {1947},
    NUMBER = {1(17)},
     PAGES = {166--217},
   MRCLASS = {53.0X},
  MRNUMBER = {29532},
MRREVIEWER = {H.\ Busemann},
}

@book {lee2013introduction,
    AUTHOR = {Lee, John M.},
     TITLE = {Introduction to smooth manifolds},
    SERIES = {Graduate Texts in Mathematics},
    VOLUME = {218},
   EDITION = {Second},
 PUBLISHER = {Springer, New York},
      YEAR = {2013},
     PAGES = {xvi+708},
      ISBN = {978-1-4419-9981-8},
   MRCLASS = {58-01 (53-01 57-01)},
  MRNUMBER = {2954043},
}

@article{li2024lowgenus,
    AUTHOR = {Li, Xingzhe and Wang, Tongrui and Yao, Xuan},
     TITLE = {Minimal surfaces with low genus in lens spaces},
   JOURNAL = {J. Reine Angew. Math.},
  FJOURNAL = {Journal f\"ur die Reine und Angewandte Mathematik. [Crelle's
              Journal]},
    VOLUME = {828},
      YEAR = {2025},
     PAGES = {175--218},
      ISSN = {0075-4102,1435-5345},
   MRCLASS = {53A10 (49Q05 53A20)},
  MRNUMBER = {4979239},
       DOI = {10.1515/crelle-2025-0061},
       URL = {https://doi.org/10.1515/crelle-2025-0061},
}

@article {liokumovich2018weyl,
    AUTHOR = {Liokumovich, Yevgeny and Marques, Fernando C. and Neves,
              Andr\'e},
     TITLE = {Weyl law for the volume spectrum},
   JOURNAL = {Ann. of Math. (2)},
  FJOURNAL = {Annals of Mathematics. Second Series},
    VOLUME = {187},
      YEAR = {2018},
    NUMBER = {3},
     PAGES = {933--961},
      ISSN = {0003-486X,1939-8980},
   MRCLASS = {53C23 (58E05 58J50)},
  MRNUMBER = {3779961},
MRREVIEWER = {Leonid\ Friedlander},
       DOI = {10.4007/annals.2018.187.3.7},
       URL = {https://doi.org/10.4007/annals.2018.187.3.7},
}

@article {mazzucchelli2018characterization,
    AUTHOR = {Mazzucchelli, Marco and Suhr, Stefan},
     TITLE = {A characterization of {Z}oll {R}iemannian metrics on the {$2$}-sphere},
   JOURNAL = {Bull. Lond. Math. Soc.},
  FJOURNAL = {Bulletin of the London Mathematical Society},
    VOLUME = {50},
      YEAR = {2018},
    NUMBER = {6},
     PAGES = {997--1006},
      ISSN = {0024-6093,1469-2120},
   MRCLASS = {53C22 (58E10)},
  MRNUMBER = {3891938},
MRREVIEWER = {St\'ephane\ Sabourau},
       DOI = {10.1112/blms.12200},
       URL = {https://doi.org/10.1112/blms.12200},
}

@article{marques2016morse,
    AUTHOR = {Marques, Fernando C. and Neves, Andr\'e},
     TITLE = {Morse index and multiplicity of min-max minimal hypersurfaces},
   JOURNAL = {Camb. J. Math.},
  FJOURNAL = {Cambridge Journal of Mathematics},
    VOLUME = {4},
      YEAR = {2016},
    NUMBER = {4},
     PAGES = {463--511},
      ISSN = {2168-0930,2168-0949},
   MRCLASS = {49J35 (58E12)},
  MRNUMBER = {3572636},
MRREVIEWER = {Giandomenico\ Orlandi},
       DOI = {10.4310/CJM.2016.v4.n4.a2},
       URL = {https://doi.org/10.4310/CJM.2016.v4.n4.a2},
}

@article {marques2021morse,
    AUTHOR = {Marques, Fernando C. and Neves, Andr\'e},
     TITLE = {Morse index of multiplicity one min-max minimal hypersurfaces},
   JOURNAL = {Adv. Math.},
  FJOURNAL = {Advances in Mathematics},
    VOLUME = {378},
      YEAR = {2021},
     PAGES = {Paper No. 107527, 58},
      ISSN = {0001-8708,1090-2082},
   MRCLASS = {58E12},
  MRNUMBER = {4191255},
MRREVIEWER = {Futoshi\ Takahashi},
       DOI = {10.1016/j.aim.2020.107527},
       URL = {https://doi.org/10.1016/j.aim.2020.107527},
}

@article{marques2017existence,
    AUTHOR = {Marques, Fernando C. and Neves, Andr\'e},
     TITLE = {Existence of infinitely many minimal hypersurfaces in positive
              {R}icci curvature},
   JOURNAL = {Invent. Math.},
  FJOURNAL = {Inventiones Mathematicae},
    VOLUME = {209},
      YEAR = {2017},
    NUMBER = {2},
     PAGES = {577--616},
      ISSN = {0020-9910,1432-1297},
   MRCLASS = {53C42 (49Q05 53C21 58E12)},
  MRNUMBER = {3674223},
MRREVIEWER = {Martin\ Man-Chun\ Li},
       DOI = {10.1007/s00222-017-0716-6},
       URL = {https://doi.org/10.1007/s00222-017-0716-6},
}

@article {marques2019equidistribution,
    AUTHOR = {Marques, Fernando C. and Neves, Andr\'e and Song, Antoine},
     TITLE = {Equidistribution of minimal hypersurfaces for generic metrics},
   JOURNAL = {Invent. Math.},
  FJOURNAL = {Inventiones Mathematicae},
    VOLUME = {216},
      YEAR = {2019},
    NUMBER = {2},
     PAGES = {421--443},
      ISSN = {0020-9910,1432-1297},
   MRCLASS = {53C42 (49Q05 49Q20 53A10 58D17 58E12)},
  MRNUMBER = {3953507},
MRREVIEWER = {S.\ Timothy\ Swift},
       DOI = {10.1007/s00222-018-00850-5},
       URL = {https://doi.org/10.1007/s00222-018-00850-5},
}

@article{martins2026spectral,
	author = {Martins, Gustavo},
	journal = {arXiv preprint arXiv:2609.20689},
	title = {Topological and spectral rigidity of hypersurface {Z}oll manifolds},
	year = {2026}}

@article {marx-kuo2025isospectral,
    AUTHOR = {Marx-Kuo, Jared},
     TITLE = {The isospectral problem for {$p$}-widths: an application of
              {Z}oll metrics},
   JOURNAL = {C. R. Math. Acad. Sci. Paris},
  FJOURNAL = {Comptes Rendus Math\'ematique. Acad\'emie des Sciences. Paris},
    VOLUME = {363},
      YEAR = {2025},
     PAGES = {565--570},
      ISSN = {1631-073X,1778-3569},
   MRCLASS = {53C22 (58J53)},
  MRNUMBER = {4916697},
       DOI = {10.5802/crmath.708},
       URL = {https://doi.org/10.5802/crmath.708},
}

@book{pitts2014existence,
    AUTHOR = {Pitts, Jon T.},
     TITLE = {Existence and regularity of minimal surfaces on {R}iemannian
              manifolds},
    SERIES = {Mathematical Notes},
    VOLUME = {27},
 PUBLISHER = {Princeton University Press, Princeton, NJ; University of Tokyo
              Press, Tokyo},
      YEAR = {1981},
     PAGES = {iv+330},
      ISBN = {0-691-08290-1},
   MRCLASS = {49F22 (53C42)},
  MRNUMBER = {626027},
MRREVIEWER = {J.\ E.\ Brothers},
}

@article {sharp2017compactness,
    AUTHOR = {Sharp, Ben},
     TITLE = {Compactness of minimal hypersurfaces with bounded index},
   JOURNAL = {J. Differential Geom.},
  FJOURNAL = {Journal of Differential Geometry},
    VOLUME = {106},
      YEAR = {2017},
    NUMBER = {2},
     PAGES = {317--339},
      ISSN = {0022-040X,1945-743X},
   MRCLASS = {53C42 (53C21)},
  MRNUMBER = {3662994},
MRREVIEWER = {Luciano\ Mari},
       DOI = {10.4310/jdg/1497405628},
       URL = {https://doi.org/10.4310/jdg/1497405628},
}

@article{song2018existence,
    AUTHOR = {Song, Antoine},
     TITLE = {Existence of infinitely many minimal hypersurfaces in closed
              manifolds},
   JOURNAL = {Ann. of Math. (2)},
  FJOURNAL = {Annals of Mathematics. Second Series},
    VOLUME = {197},
      YEAR = {2023},
    NUMBER = {3},
     PAGES = {859--895},
      ISSN = {0003-486X,1939-8980},
   MRCLASS = {53A10 (53C42)},
  MRNUMBER = {4564260},
MRREVIEWER = {Thomas\ Koerber},
       DOI = {10.4007/annals.2023.197.3.1},
       URL = {https://doi.org/10.4007/annals.2023.197.3.1},
}

@article{song2021generic,
    AUTHOR = {Song, Antoine and Zhou, Xin},
     TITLE = {Generic scarring for minimal hypersurfaces along stable
              hypersurfaces},
   JOURNAL = {Geom. Funct. Anal.},
  FJOURNAL = {Geometric and Functional Analysis},
    VOLUME = {31},
      YEAR = {2021},
    NUMBER = {4},
     PAGES = {948--980},
      ISSN = {1016-443X,1420-8970},
   MRCLASS = {53C42},
  MRNUMBER = {4317508},
MRREVIEWER = {Gabjin\ Yun},
       DOI = {10.1007/s00039-021-00571-7},
       URL = {https://doi.org/10.1007/s00039-021-00571-7},
}

@article {wang2022min,
    AUTHOR = {Wang, Tongrui},
     TITLE = {Min-max theory for {$G$}-invariant minimal hypersurfaces},
   JOURNAL = {J. Geom. Anal.},
  FJOURNAL = {Journal of Geometric Analysis},
    VOLUME = {32},
      YEAR = {2022},
    NUMBER = {9},
     PAGES = {Paper No. 233, 53},
      ISSN = {1050-6926,1559-002X},
   MRCLASS = {53A10 (53C42)},
  MRNUMBER = {4452896},
MRREVIEWER = {Peter\ McGrath},
       DOI = {10.1007/s12220-022-00966-4},
       URL = {https://doi.org/10.1007/s12220-022-00966-4},
}

@article{wang2023equivariant,
    AUTHOR = {Wang, Tongrui},
     TITLE = {Equivariant {M}orse index of min-max {$G$}-invariant minimal
              hypersurfaces},
   JOURNAL = {Math. Ann.},
  FJOURNAL = {Mathematische Annalen},
    VOLUME = {389},
      YEAR = {2024},
    NUMBER = {2},
     PAGES = {1599--1637},
      ISSN = {0025-5831,1432-1807},
   MRCLASS = {58E12 (49Q05)},
  MRNUMBER = {4745747},
MRREVIEWER = {Panayotis\ Vyridis},
       DOI = {10.1007/s00208-023-02681-z},
       URL = {https://doi.org/10.1007/s00208-023-02681-z},
}

@article{wang2026multiplicity,
	author = {Wang, Tongrui},
	journal = {arXiv preprint arXiv:2601.09884},
	title = {Multiplicity one for equivariant min-max theory in prescribed homology classes},
	year = {2026}}

@article {white1991space,
    AUTHOR = {White, Brian},
     TITLE = {The space of minimal submanifolds for varying {R}iemannian
              metrics},
   JOURNAL = {Indiana Univ. Math. J.},
  FJOURNAL = {Indiana University Mathematics Journal},
    VOLUME = {40},
      YEAR = {1991},
    NUMBER = {1},
     PAGES = {161--200},
      ISSN = {0022-2518,1943-5258},
   MRCLASS = {58D10 (53C42)},
  MRNUMBER = {1101226},
MRREVIEWER = {Jo\~ao\ Lucas Marques Barbosa},
       DOI = {10.1512/iumj.1991.40.40008},
       URL = {https://doi.org/10.1512/iumj.1991.40.40008},
}

@article{zhou2020multiplicity,
    AUTHOR = {Zhou, Xin},
     TITLE = {On the multiplicity one conjecture in min-max theory},
   JOURNAL = {Ann. of Math. (2)},
  FJOURNAL = {Annals of Mathematics. Second Series},
    VOLUME = {192},
      YEAR = {2020},
    NUMBER = {3},
     PAGES = {767--820},
      ISSN = {0003-486X,1939-8980},
   MRCLASS = {53C42 (49J35 49Q05 58E12)},
  MRNUMBER = {4172621},
MRREVIEWER = {Martin\ Man-Chun\ Li},
       DOI = {10.4007/annals.2020.192.3.3},
       URL = {https://doi.org/10.4007/annals.2020.192.3.3},
}

@article {zoll1903ueber,
    AUTHOR = {Zoll, Otto},
     TITLE = {Ueber {F}l\"achen mit {S}charen geschlossener geod\"atischer
              {L}inien},
   JOURNAL = {Math. Ann.},
  FJOURNAL = {Mathematische Annalen},
    VOLUME = {57},
      YEAR = {1903},
    NUMBER = {1},
     PAGES = {108--133},
      ISSN = {0025-5831,1432-1807},
   MRCLASS = {99-04},
  MRNUMBER = {1511201},
       DOI = {10.1007/BF01449019},
       URL = {https://doi.org/10.1007/BF01449019},
}
\end{document}